\makeatletter\@addtoreset {equation}{section}\makeatother
\newcommand{\beq}{\begin{equation}}
\newcommand{\eeq}{\end{equation}}
\newcommand{\ber}{\begin{array}}
\newcommand{\eer}{\end{array}}
\newcommand{\ena}{\end{eqnarray}}
\newcommand{\beqa}{\begin{eqnarray}}
\newcommand{\eeqa}{\end{eqnarray}}
\newcommand{\bea}{\begin{eqnarray}}
\newcommand{\eea}{\end{eqnarray}}
\newtheorem{theorem}{Theorem}
\newtheorem{rem}{Remark}[section]
\newtheorem{lemma}{Lemma}[section]
\newcommand{\minPlusOne}[1]{
  \min\!\left(#1\right) + 1
}
\begin{document}

\title[Ground state of the conformal flow]{Ground state of the conformal flow on $\mathbb{S}^3$}

\author{Piotr Bizo\'n}
\address{Institute of Physics, Jagiellonian
University, Krak\'ow, Poland\newline \indent Department of Physics,  Chulalongkorn University,
Bangkok, Thailand}
\email{bizon@th.if.uj.edu.pl}

\author{Dominika Hunik-Kostyra}
\address{Institute of Physics, Jagiellonian
University, Krak\'ow, Poland}
\email{dominika.hunik@uj.edu.pl}

\author{Dmitry Pelinovsky}
\address{Department of Mathematics, McMaster University, Hamilton, Ontario  L8S 4K1, Canada}
\email{dmpeli@math.mcmaster.ca}

%\thanks{}

\date{\today}

\begin{abstract}
We consider the conformal flow model derived in \cite{bal} as a normal form for the conformally invariant cubic wave equation on $\mathbb{S}^3$. We prove that the energy attains a global constrained maximum at a family of particular stationary solutions which we call the ground state family. Using this fact and spectral properties of the linearized flow (which are interesting on their own due to a supersymmetric structure) we prove nonlinear orbital stability of the ground state family.  The main difficulty in the proof is due to the degeneracy of the ground state family as a constrained maximizer of the energy.
\end{abstract}

\maketitle

\section{Introduction}

Long-time behavior of nonlinear dispersive waves on a compact manifold can be very rich and complex because, in contrast to unbounded domains, waves cannot disperse to infinity and keep self-interacting for all times (even the global regularity of arbitrarily small solutions is a nontrivial issue). A major mathematical challenge in this context is to describe the energy transfer  between eigenmodes of the corresponding linearized flow
near the zero equilibrium. A simple  model for gaining  insight into this problem is the conformally invariant cubic wave equation on the three-sphere.
The key feature of this model is the fully resonant linearized spectrum. As a consequence, the long-time behavior of small solutions of this equation
is well approximated by solutions of an infinite-dimensional time-averaged Hamiltonian system which governs resonant interactions between the modes. This system, called the \emph{conformal flow} on $\mathbb{S}^3$, has been introduced and studied in \cite{bal}.

Among its many remarkable features (in particular, low dimensional invariant subspaces), the conformal flow has been found to admit a wealth of \emph{stationary states}, i.e. solutions for which no energy transfer between the modes occurs. In this paper we  show that among the stationary states there is a distinguished one, hereafter called the \emph{ground state}, which is a global constrained maximizer of the energy. The main body  of the paper is devoted to proving nonlinear orbital stability of the ground state.

In terms of complex Fourier coefficients
$\left(\alpha_n(t) \right)_{n \in \mathbb{N}}$, the conformal flow system takes the form (see \cite{bal} for the details of the derivation)
\beq
i(n+1) \frac{d \alpha_n}{d t} = \sum\limits_{j=0}^\infty \sum_{k=0}^{n+j} S_{njk,n+j-k}\,\bar\alpha_j \alpha_k \alpha_{n+j-k}\,,
\label{flow}
\eeq
where
\beq
 S_{njk,n+j-k} = \minPlusOne{n,j,k,n+j-k}.
\eeq
This  is  the Hamiltonian system with the conserved energy function
\begin{equation}
 H(\alpha) = \sum\limits_{n=0}^{\infty}\sum\limits_{j=0}^{\infty} \sum\limits_{k=0}^{n+j} S_{njk,n+j-k}\,\bar\alpha_n \bar \alpha_j \alpha_k \alpha_{n+j-k}
\label{Hconf}
\end{equation}
and symplectic form $\sum_n 2i(n+1)\,d\bar\alpha_n\wedge d\alpha_n$ so that
equations of motion (\ref{flow}) can be written in the form
\begin{equation}\label{ham-eq}
i (n+1) \frac{d \alpha_n}{d t} = \frac{1}{2}\,\frac{\partial H}{\partial \bar \alpha_n}\,.
\end{equation}
The conformal flow system  enjoys the following three one-parameter groups of symmetries:
 \begin{eqnarray}
 \mbox{Scaling:} \,&& \alpha_n(t) \rightarrow c \alpha_n(c^2 t),\label{symmscale}\\
 \mbox{Global phase shift:} \,&& \alpha_n(t) \rightarrow e^{i\theta} \alpha_n(t),\label{symmshift1}\\
 \mbox{Local phase shift:} \, &&\alpha_n(t) \rightarrow e^{i n \mu} \alpha_n(t),\label{symmshift2}
 \end{eqnarray}
where  $c$, $\theta$, and $\mu$ are real parameters. The latter two symmetries give rise
to two additional conserved quantities:
\begin{eqnarray}
Q(\alpha) & =&\sum\limits_{n=0}^{\infty} (n+1) |\alpha_n|^2,\label{charge}\\
E(\alpha) & =&\sum\limits_{n=0}^{\infty} (n+1)^2 |\alpha_n|^2\,. \label{lenergy}
\end{eqnarray}

\subsubsection*{Notation}
We denote the set of nonnegative integers by $\mathbb{N}$ and the set of positive integers by $\mathbb{N}_+$.
A sequence $(\alpha_n)_{n\in \mathbb{N}}$ is denoted for short by $\alpha$.
The space of square-summable sequences  is denoted by~$\ell^2$.
Given $s>0$, we define the weighted space of sequences
\begin{equation}\label{hs}
  h^s:=\left\{ \alpha \in \ell^2(\mathbb{N}): \quad \sum_{n=0}^{\infty} (n+1)^{2s} |\alpha_n|^2 <\infty \right\},
\end{equation}
endowed with its natural norm. We write $X \lesssim Y$ to denote the statement that $X\leq C Y$
for some universal  (i.e., independent of other parameters) constant $C>0$.
\vskip 0.2cm

The following theorem  states that the conformal flow
is globally well-posed in $h^1$. This is not an optimal result in terms of regularity of initial data, however it is sufficient  for our purposes. Note that
 all three conserved quantities $H$, $Q$, $E$ are well defined
in $h^1$.

\begin{theorem}
\label{theorem-evolution}
For every initial data $\alpha(0) \in h^1$, there exists a unique
global-in-time solution $\alpha(t) \in C(\mathbb{R}, h^1)$
of the system  (\ref{flow}). Moreover,
for every $t$,
$$
H(\alpha(t)) = H(\alpha(0)), \quad
Q(\alpha(t)) = Q(\alpha(0)), \quad
E(\alpha(t)) = E(\alpha(0)).
$$
\end{theorem}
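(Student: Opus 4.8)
The plan is to establish local well-posedness in $h^1$ by a Picard--Lindel\"of fixed-point argument, then obtain conservation of $H$, $Q$, $E$ by differentiating along the flow, and finally globalize using the elementary identity $E(\alpha)=\|\alpha\|_{h^1}^2$.

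For the local step, rewrite \eqref{flow} as the $h^1$-valued ODE $\dot\alpha=-iN(\alpha)$, where
\beq
  N(\alpha)_n:=\frac{1}{n+1}\sum_{j=0}^{\infty}\sum_{k=0}^{n+j}S_{njk,n+j-k}\,\bar\alpha_j\,\alpha_k\,\alpha_{n+j-k}.
\eeq
The one nontrivial ingredient is the trilinear estimate
\beq\label{trilin}
  \|N(\alpha)\|_{h^1}\lesssim\|\alpha\|_{h^1}^{3}
\eeq
together with its multilinear version. I would prove \eqref{trilin} from two observations: first, $h^1\hookrightarrow\ell^1$, since $\sum_n|\alpha_n|\le(\sum_n(n+1)^{-2})^{1/2}\|\alpha\|_{h^1}$; second, $S_{njk,n+j-k}\le\min(k,n+j-k)+1$, while the resonance relation $n+j=k+(n+j-k)$ forces the larger of $k$ and $l:=n+j-k$ to be at least $n/2$, so that on the region $k\le l$ (the region $l\le k$ being symmetric) one has $(l+1)^{-1}\le 2(n+1)^{-1}$. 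Together these give
\beq
  (n+1)\,|N(\alpha)_n|\le\sum_{j=0}^{\infty}\sum_{k\le l}(k+1)|\alpha_k|\,|\alpha_l|\,|\alpha_j|\le\frac{2}{n+1}\sum_{j=0}^{\infty}|\alpha_j|\sum_{k}(k+1)|\alpha_k|\,(l+1)|\alpha_l|\le\frac{2}{n+1}\,\|\alpha\|_{\ell^1}\|\alpha\|_{h^1}^{2},
\eeq
where the inner sum over $k$ is bounded by $\|\alpha\|_{h^1}^2$ via Cauchy--Schwarz, uniformly in $n$ and $j$. Squaring, invoking $h^1\hookrightarrow\ell^1$, and summing $(n+1)^2|N(\alpha)_n|^2$ over $n$ (convergent because $\sum_n(n+1)^{-2}<\infty$) yields \eqref{trilin}. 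As $N$ is a bounded cubic map (conjugate-linear in one argument, linear in the other two), it is locally Lipschitz on $h^1$, and Picard--Lindel\"of in the Banach space $h^1$ produces, for each $\alpha(0)\in h^1$, a unique maximal solution $\alpha\in C^1((-T_-,T_+),h^1)$ of \eqref{flow} satisfying the blow-up alternative: if $T_+<\infty$ then $\limsup_{t\uparrow T_+}\|\alpha(t)\|_{h^1}=\infty$, and similarly at $-T_-$.

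For the conservation laws, note that $Q$ and $E$ are continuous quadratic forms on $h^1$ and $H$ is of class $C^1$ on $h^1$ with $|H(\alpha)|\lesssim\|\alpha\|_{h^1}^4$, proved just as \eqref{trilin}; hence $t\mapsto Q(\alpha(t))$, $E(\alpha(t))$, $H(\alpha(t))$ are $C^1$ and differentiate by the chain rule, all series manipulations being justified by absolute convergence on $h^1$. Using \eqref{flow}, the invariance of $S_{njk,n+j-k}$ under permutations of its four indices, and the identity $(n-k)+(j-l)=0$ on the resonant set, a short computation gives $\tfrac{d}{dt}Q(\alpha(t))=\tfrac{d}{dt}E(\alpha(t))=0$; equivalently, these are Noether's theorem for the phase symmetries \eqref{symmshift1}--\eqref{symmshift2}. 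Likewise $\tfrac{d}{dt}H(\alpha(t))=0$ because $\dot\alpha$ is the Hamiltonian vector field of $H$ in \eqref{ham-eq}. Finally, since
\beq
  E(\alpha)=\sum_{n=0}^{\infty}(n+1)^2|\alpha_n|^2=\|\alpha\|_{h^1}^2,
\eeq
conservation of $E$ forces $\|\alpha(t)\|_{h^1}=\|\alpha(0)\|_{h^1}$ throughout the maximal interval, whence the blow-up alternative gives $T_\pm=\infty$ and the solution is global; this is exactly why $h^1$ is the convenient space.

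I expect the trilinear estimate \eqref{trilin} to be the only real obstacle. Treating the nonlinearity as a generic threefold convolution would fail, since the convolution of $\ell^2$ sequences need not lie in $\ell^2$; the estimate works precisely because $S_{njk,n+j-k}=\min(n,j,k,n+j-k)+1$ is comparable to the smallest of the four indices and the resonance relation then forces one output index to be large, permitting a derivative weight to be exchanged for the summable factor $(n+1)^{-1}$.
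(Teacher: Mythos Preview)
Your proof is correct and follows the same overall strategy as the paper: local well-posedness via a Picard fixed-point argument hinging on the trilinear estimate $\|N(\alpha)\|_{h^1}\lesssim\|\alpha\|_{h^1}^3$, followed by conservation of $E=\|\cdot\|_{h^1}^2$ to globalize. The one genuine difference is how you prove the trilinear estimate. The paper uses the crude bound $S_{njk,n+j-k}/(n+1)\le 1$ (valid since $n$ is itself one of the four indices in the minimum), then identifies the resulting majorant sum with the nonnegative Fourier coefficients of $|u|^2u$ for $u(\theta)=\sum_n|\alpha_n|e^{in\theta}$ and invokes the Banach algebra property of $H^1(\mathbb{S})$. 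Your argument is more combinatorial: you retain the finer bound $S\le\min(k,l)+1$, trade the weight on the larger of $k,l$ for an extra factor $(n+1)^{-1}$ via the resonance constraint $k+l=n+j\ge n$, and close with Cauchy--Schwarz and the embedding $h^1\hookrightarrow\ell^1$. Your route avoids Fourier analysis entirely and in fact delivers the stronger pointwise bound $(n+1)^2|N(\alpha)_n|\lesssim\|\alpha\|_{h^1}^3$; the paper's route is shorter, does not exploit the specific form of $S$, and makes transparent why the argument extends to $h^s$ for any $s>1/2$.
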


A solution of \eqref{flow} is called a \emph{stationary state} if $|\alpha(t)|$ is time-independent.
A stationary state is called a \emph{standing wave} if it has the form
 \begin{equation}\label{standing}
  \alpha(t) = A e^{-i \lambda t},
\end{equation}
where the complex amplitudes $( A_n )_{n \in \mathbb{N}}$ are time-independent and the parameter $\lambda$  real.
Substituting \eqref{standing} into \eqref{flow}, we get a nonlinear system of algebraic equations
for the amplitudes:
\begin{equation}\label{sys}
 (n+1) \lambda A_n=\sum\limits_{j=0}^{\infty} \sum\limits_{k=0}^{n+j} S_{n,j,k,n+j-k} \,\bar A_j A_k A_{n+j-k}\,.
\end{equation}
This equation can be written as
$$
\lambda \dfrac{\partial Q}{\partial \bar A_n} = \dfrac{1}{2} \dfrac{\partial H}{\partial \bar A_n},
$$
hence standing waves
 \eqref{standing} admit a variational characterization as the critical points of the  functional
\begin{equation}\label{K}
K(\alpha) = \frac{1}{2} H(\alpha) - \lambda  Q(\alpha).
\end{equation}
Equivalently, standing waves \eqref{standing} are  critical points of $H$ for fixed $Q$,
where $\lambda$ is a Lagrange multiplier.

The simplest solutions of \eqref{sys} are the \emph{single-mode} states given, for any  $N \in \mathbb{N}$, by
 \begin{equation}\label{1mode}
A_n = c\, \delta_{nN}, \quad \lambda = |c|^2, \quad c\in \mathbb{C}.
 \end{equation}
In a separate paper we classify all stationary states that bifurcate from the single-mode states \cite{BHP2}. Here we are concerned exclusively with the following family of standing waves  that bifurcates from the $N=0$ single-mode state
\begin{equation}
 \label{r1}
A_n = c\, p^n, \quad \lambda = \frac{|c|^2}{(1-|p|^2)^2},
\end{equation}
where $c, p \in \mathbb{C}$ and $|p| < 1$. We shall refer to \eqref{r1} as  the \emph{ground state} family because it is the global maximizer of $H$ for fixed $Q$, as follows from our next theorem.
\begin{theorem}
\label{theorem-bound}
For every $\alpha\in h^{1/2}$ the following inequality holds
\begin{equation}
\label{bound-H}
H(\alpha) \leq  Q(\alpha)^2.
\end{equation}
Moreover, $H(\alpha) = Q(\alpha)^2$ if and only if $\alpha_n= c p^n$ for some $c,p \in \mathbb{C}$ with $|p|<1$.
\end{theorem}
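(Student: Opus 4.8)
The plan is to re-express $H$ as a sum of squares by expanding the combinatorial weight. Since $n,j,k,l$ are nonnegative integers,
\[
\min(n,j,k,l)+1 \;=\; \sum_{m=0}^{\infty} \mathbf{1}_{\{n\ge m\}}\,\mathbf{1}_{\{j\ge m\}}\,\mathbf{1}_{\{k\ge m\}}\,\mathbf{1}_{\{l\ge m\}} .
\]
Substituting this into \eqref{Hconf}, using the constraint $n+j=k+l$ to write $l=n+j-k$, and interchanging the order of summation (legitimate for $\alpha\in h^{1/2}$, since the bound derived below applied to $|\alpha|$ shows the quadruple sum is absolutely convergent), I would obtain
\[
H(\alpha) \;=\; \sum_{m=0}^{\infty}\; \sum_{s=2m}^{\infty} \bigl| c_s^{(m)} \bigr|^{2}, \qquad c_s^{(m)} := \sum_{n=m}^{s-m} \alpha_n\,\alpha_{s-n},
\]
so that $c_s^{(m)}$ is the coefficient of $z^s$ in $\bigl(\sum_{n\ge m}\alpha_n z^n\bigr)^2$. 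Only this sequence form is needed below.

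Next I would estimate each $c_s^{(m)}$, a sum of $s-2m+1$ terms, by the Cauchy--Schwarz inequality,
\[
\bigl| c_s^{(m)} \bigr|^{2} \;\le\; (s-2m+1)\sum_{n=m}^{s-m} |\alpha_n|^{2} |\alpha_{s-n}|^{2},
\]
then sum over $s\ge 2m$, reindex by the pair $(n,l)$ with $l=s-n$, $n\ge m$, $l\ge m$, and finally sum over $m$:
\[
H(\alpha) \;\le\; \sum_{m=0}^{\infty}\sum_{\substack{n\ge m\\ l\ge m}} (n+l-2m+1)\,|\alpha_n|^{2}|\alpha_l|^{2} \;=\; \sum_{n,l\ge 0} |\alpha_n|^{2}|\alpha_l|^{2} \sum_{m=0}^{\min(n,l)} (n+l-2m+1).
\]
The inner arithmetic sum equals $(M+1)(n+l+1-M)$ with $M=\min(n,l)$, and since $n+l-\min(n,l)=\max(n,l)$ this is $(\min(n,l)+1)(\max(n,l)+1)=(n+1)(l+1)$. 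Hence
\[
H(\alpha)\;\le\;\sum_{n,l\ge 0}(n+1)(l+1)\,|\alpha_n|^{2}|\alpha_l|^{2} \;=\; \Bigl(\sum_{n\ge 0}(n+1)|\alpha_n|^{2}\Bigr)^{2} \;=\; Q(\alpha)^{2},
\]
which also confirms $H$ is finite on $h^{1/2}$.

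For the equality statement: the chain above consists of sums of nonnegative quantities, so $H(\alpha)=Q(\alpha)^{2}$ forces equality in the Cauchy--Schwarz step for every $m$ and every $s\ge 2m$ for which the relevant terms are not all zero, i.e.\ $n\mapsto\alpha_n\alpha_{s-n}$ must be constant on $\{m,m+1,\dots,s-m\}$. Taking $m=0$ gives that $\alpha_n\alpha_{s-n}$ is independent of $n\in\{0,\dots,s\}$ for every $s$. If $\alpha_0\ne0$, setting $p:=\alpha_1/\alpha_0$ and inducting on $s$ through $\alpha_0\alpha_s=\alpha_1\alpha_{s-1}$ yields $\alpha_n=\alpha_0 p^{n}$, with $|p|<1$ forced by $\alpha\in\ell^2$; if $\alpha_0=0$ the same relations force $\alpha\equiv0$. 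Conversely, for $\alpha_n=c\,p^{n}$ with $|p|<1$ one has $\alpha_n\alpha_{s-n}=c^{2}p^{s}$ independent of $n$, so every Cauchy--Schwarz above is saturated and $H(\alpha)=Q(\alpha)^{2}$.

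The one genuinely delicate point is spotting the identity $\min(n,j,k,l)+1=\sum_m\mathbf{1}_{\{n\ge m\}}\mathbf{1}_{\{j\ge m\}}\mathbf{1}_{\{k\ge m\}}\mathbf{1}_{\{l\ge m\}}$, which turns $H$ into the sum of squares $\sum_{m,s}|c_s^{(m)}|^{2}$; once this is in hand the problem decouples into one Cauchy--Schwarz per pair $(m,s)$, and the arithmetic identity $\sum_{m=0}^{\min(n,l)}(n+l-2m+1)=(n+1)(l+1)$ reassembles them into exactly $Q(\alpha)^{2}$ with no loss on geometric sequences. A Cauchy--Schwarz applied directly to \eqref{Hconf} without this decomposition is not sharp. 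Everything after the decomposition, including the rigidity argument for the equality case, is routine.
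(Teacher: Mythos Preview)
Your proof is correct and takes a genuinely different route from the paper's. Both arguments begin by grouping terms according to the level $s=n+j=k+l$, but then diverge. The paper, for each fixed level (and separately for $s$ odd and even), proves by induction on $N$ an explicit algebraic identity expressing the difference between the two sides as a concrete sum of squares $\sum_{j<k}(j+1)|x_j-x_k|^2$ in the variables $x_k=\alpha_k\alpha_{s-k}$. You instead use the layer decomposition $\min(n,j,k,l)+1=\sum_{m\ge0}\mathbf{1}_{\{n,j,k,l\ge m\}}$ to write $H$ itself as the sum of squares $\sum_{m,s}|c_s^{(m)}|^2$, and then a single Cauchy--Schwarz per pair $(m,s)$ together with the arithmetic identity $\sum_{m=0}^{\min(n,l)}(n+l-2m+1)=(n+1)(l+1)$ reassembles exactly $Q(\alpha)^2$. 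Your approach is shorter and more conceptual, avoids induction, and gives a clean structural formula for $H$; the paper's approach has the advantage of producing an explicit expression for the gap $Q^2-H$ as a sum of squares, which can be useful for quantitative coercivity estimates. The equality analysis is essentially the same in both: $\alpha_n\alpha_{s-n}$ must be independent of $n$ for every $s$, forcing $\alpha$ to be geometric.
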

The ground state family \eqref{r1} is parametrized by two complex parameters $c$ and $p$ with $|p|<1$.
In what follows, by the ground state we shall mean the normalized solution
\begin{equation}
\label{ground-state}
A_n(p) = (1-p^2) p^n, \quad \lambda = 1,
\end{equation}
parametrized by one real parameter $p \in [0,1)$. The conserved quantities $H$, $Q$, and $E$
for the ground state are
\begin{equation}\label{GS-conserved}
H(A(p))=1,\quad   Q(A(p))=1,\quad E(A(p))=\frac{1+p^2}{1-p^2}.
\end{equation}
 By the ground state \emph{orbit} (for a given $p$)  we shall mean the set obtained from
 the ground state $A(p)$ by acting on it  with the gauge  symmetries (\ref{symmshift1}) and (\ref{symmshift2}):
\begin{equation}\label{orbit}
  \mathcal{A}(p)= \{\left(e^{i\theta+i \mu n} A_n(p)
\right)_{n\in \mathbb{N}} : (\theta,\mu)\in \mathbb{S}^1 \times \mathbb{S}^1\}\,.
\end{equation}

Our main goal is to show that the ground state is \emph{orbitally stable}, i.e.
a perturbed ground state with given $p$ stays close to its orbit $\mathcal{A}(p)$
for all later times provided that the perturbation is small enough.
To measure the distance (in some norm $X$)  between the solution and the ground state orbit, we introduce the metric
\begin{equation}\label{dist}
\mathrm{dist}_X \left(\alpha(t),\mathcal{A}(p) \right):=\inf_{\theta,\mu \in \mathbb{S}}
\|\alpha(t)-e^{i \theta + i \mu \cdot} A(p(t))\|_{X}\,.
\end{equation}

We shall study orbital stability using the  Lyapunov method based on the variational formulation
\eqref{K}, spectral analysis, and coercivity estimates. The  main difficulty is due to the degeneracy of
the ground state as a constrained maximizer of energy. To eliminate this degeneracy
we shall use the symplectic orthogonal decomposition, combined with conservation laws and gauge symmetries.

For $p=0$ the ground state reduces to the single-mode state
\begin{equation}\label{deltaKronecker}
A_n(0)=\delta_{n0}, \quad \lambda = 1,
\end{equation}
and the symmetry orbit \eqref{orbit} becomes one-dimensional.
We will show that in this case a two-parameter orthogonal decomposition
involving the generators of scaling  \eqref{symmscale} and gauge \eqref{symmshift1}
symmetries suffices to eliminate the degeneracy. Having that and using
the conservation laws we establish the orbital stability of the single-mode state,
as stated in the following theorem.

\begin{theorem}
\label{theorem-one-mode}
For every small $\epsilon > 0$, there is $\delta > 0$ such that for every initial data
$\alpha(0)\in h^1$ with $\| \alpha(0) - A(0) \|_{h^1} \leq \delta$,
the corresponding unique solution $\alpha(t) \in C(\mathbb{R},h^1)$ of
(\ref{flow}) satisfies for all $t$
\begin{equation}
\label{bound-one-mode}
%\inf_{\theta \in \mathbb{S}} \| \alpha(t) -  e^{i \theta} A(0) \|_{h^1} \leq \epsilon.
\mathrm{dist}_{h^1} (\alpha(t), \mathcal{A}(0)) \leq \epsilon.
\end{equation}
\end{theorem}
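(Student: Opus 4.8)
The plan is to implement the standard Lyapunov/Grillakis--Shatah--Strauss strategy adapted to the degeneracy present here. The conserved functional to work with is $K(\alpha) = \frac{1}{2}H(\alpha) - Q(\alpha)$ (taking $\lambda = 1$, the value attached to the single-mode state $A(0)$), together with the two additional conserved quantities $Q$ and $E$. First I would record that Theorem \ref{theorem-bound} gives $K(\alpha) = \frac{1}{2}H(\alpha) - Q(\alpha) \le \frac{1}{2}Q(\alpha)^2 - Q(\alpha)$, so along the flow $K$ is controlled by $Q$ alone; in particular $A(0)$ is a constrained maximizer of $H$ at fixed $Q$, but the maximizer is \emph{not} isolated — the whole ground-state family \eqref{r1} saturates the bound, so $K''$ at $A(0)$ has a large kernel. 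The task is therefore to show coercivity of $K$ (or of a suitable modification) modulo this degeneracy, after removing the directions generated by the symmetries.

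The key steps, in order. (i) Given $\alpha(0)$ with $\|\alpha(0)-A(0)\|_{h^1}\le\delta$, use the conservation of $Q$ and $E$ and the smallness of $\delta$ to conclude $|Q(\alpha(t)) - 1| \lesssim \delta$ and $|E(\alpha(t)) - 1|\lesssim\delta$ for all $t$; the bound on $E$ is what keeps the solution in $h^1$ quantitatively, not merely in $h^{1/2}$. (ii) Introduce the modulated decomposition adapted to the one-dimensional orbit $\mathcal A(0)=\{e^{i\theta}\delta_{n0}\}$: write, for parameters $(\theta,b)$ to be chosen, $\alpha(t) = e^{i\theta(t)}\big( (1+b(t))\,\delta_{n0} + v(t)\big)$ where $v$ satisfies two scalar orthogonality (symplectic-orthogonality) constraints — one removing the phase direction $i\delta_{n0}$ coming from \eqref{symmshift1}, and one removing the scaling direction $\delta_{n0}$ coming from \eqref{symmscale}, exactly the two generators mentioned in the text just before the theorem. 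A standard implicit-function-theorem argument makes $\theta(t), b(t)$ well-defined $C^1$ functions as long as the solution stays near the orbit. (iii) Expand $K(\alpha(t)) = K(e^{i\theta}((1+b)\delta_{n0}+v))$. Gauge invariance kills the $\theta$-dependence; the $b$-dependence is absorbed because $b$ is itself pinned down by the constraint $Q(\alpha(t))$ being nearly $1$ (so $|b|\lesssim\delta + \|v\|^2$); what remains is the Hessian quadratic form $\langle K''(A(0))v,v\rangle$ restricted to the codimension-two subspace cut out by the two constraints. (iv) Compute $K''(A(0))$ explicitly from \eqref{Hconf}: because $A_n(0)=\delta_{n0}$ the quartic sums collapse dramatically, and one finds that $\langle K''(A(0))v,v\rangle$ is a diagonal quadratic form in the components $v_n$, $n\ge 1$, with strictly negative-definite behavior in the directions transverse to the kernel. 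Here the crucial point is that the lone dangerous zero-mode direction (the one tangent to the $p$-family, namely $\partial_p A_n(p)|_{p=0}\propto\delta_{n1}$) is precisely controlled by the conserved $E$: the constraint $|E(\alpha)-1|\lesssim\delta$ forbids $|v_1|$ from being large, because increasing $|v_1|$ raises $E$. (v) Combine: $-K(\alpha(t)) \gtrsim \|v(t)\|_{h^1}^2 - C\delta$ after using the $Q$, $E$ constraints to dispose of the bad directions, while $-K(\alpha(t)) = -K(\alpha(0)) \lesssim \delta$ by conservation and step (i). Hence $\|v(t)\|_{h^1}^2\lesssim\delta$, and a continuity/bootstrap argument upgrades this to the claimed bound $\mathrm{dist}_{h^1}(\alpha(t),\mathcal A(0))\le\epsilon$ provided $\delta$ is chosen small enough.

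The main obstacle is step (iv)–(v): showing that after projecting out the two symmetry directions the Hessian $K''(A(0))$ is coercive \emph{in the $h^1$ norm} on the remaining subspace, and in particular handling the single residual degenerate direction $\delta_{n1}$ — the one responsible for the ground-state family deforming in $p$ — which the quadratic form $\langle K''(A(0))v,v\rangle$ does \emph{not} see (it sits in the kernel). The resolution is that this direction is not free: it costs $E$, and $E$ is conserved, so $|v_1|\lesssim\delta$ is forced from outside the energy functional. Making this quantitative — i.e. proving an inequality of the shape $-\langle K''(A(0))v,v\rangle + C_1(E(\alpha)-1) \gtrsim \|v\|_{h^1}^2$ valid on the constrained subspace — is the technical heart of the argument, and it is also the place where the $h^1$ (rather than $h^{1/2}$) setting is genuinely used, since controlling $\|v\|_{h^1}$ from $\langle K''v,v\rangle$ needs the quadratic form to grow like $(n+1)^2|v_n|^2$, which is exactly what the combination of $H''$ and the $E$-correction supplies. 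The remaining parts — modulation, the bound on $|b|$, the bootstrap — are routine once this coercivity estimate is in hand.
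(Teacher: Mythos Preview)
Your approach brings in more machinery than needed, and there is a wobble in step (v) that you partly catch in the final paragraph. Concretely: the Hessian $K''(A(0))$, restricted to the subspace $a_0=b_0=0$ produced by your modulation, acts diagonally with eigenvalues $1-n$ on both $a_n$ and $b_n$ (for $N=0$ the operators $L_\pm$ in (\ref{Hessian-one-mode}) reduce to $(L_\pm a)_n = (1-n)a_n$ for $n\ge 1$). Hence $-\langle K''(A(0))v,v\rangle = \sum_{n\ge 1}(n-1)|v_n|^2$, which gives only $h^{1/2}$ coercivity, not $h^1$. So the claim in step (v) that ``$-K(\alpha(t))\gtrsim \|v(t)\|_{h^1}^2 - C\delta$ after using the $Q,E$ constraints to dispose of the bad directions'' is false if $E$ is used merely to pin down $|v_1|$; you need the full $E$-term inside the Lyapunov functional, as you do write in your last paragraph. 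Once you use $-K + C_1 E$ as the Lyapunov quantity, the quadratic part has weights $(n-1)+C_1(n+1)^2\gtrsim (n+1)^2$ and the argument goes through.

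The paper, however, takes a much shorter route that bypasses $K$, $H$, and all Hessian analysis. After the two-parameter modulation $\alpha_n(t) = e^{i\theta(t)}\bigl(c(t)\delta_{n0} + a_n(t) + ib_n(t)\bigr)$ with the symplectic orthogonality constraints $\langle MA(0),a\rangle=\langle MA(0),b\rangle=0$ (which here just say $a_0=b_0=0$), one has the \emph{exact} identities
\[
Q(\alpha(t)) = c(t)^2 + \sum_{n\ge 1}(n+1)(a_n^2+b_n^2),\qquad
E(\alpha(t)) = c(t)^2 + \sum_{n\ge 1}(n+1)^2(a_n^2+b_n^2).
\]
Subtracting gives $E(\alpha(t))-Q(\alpha(t)) = \sum_{n\ge 1} n(n+1)(a_n^2+b_n^2) \gtrsim \|a+ib\|_{h^1}^2$, and since $|E(\alpha(0))-1|,\ |Q(\alpha(0))-1|\lesssim\delta$ are both conserved, the bound $\|a(t)+ib(t)\|_{h^1}\lesssim\delta^{1/2}$ follows immediately; then $|c(t)-1|\lesssim\delta$ from the expansion of $Q$. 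No variational characterization, no spectral computation, no separate handling of the $\delta_{n1}$ direction is needed --- the algebraic identity $E-Q$ at $p=0$ does all the work. Your GSS template is the right one for the general $p\in(0,1)$ case (Theorem~\ref{theorem-ground-state}), where it \emph{is} needed and where indeed the functional $K$ only yields $h^{1/2}$ control (cf.\ Lemma~\ref{lemma-L-4} and the bound (\ref{bound-ground-state})); at $p=0$ it is overkill.
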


For $p \in (0,1)$, we need to introduce the four-parameter orthogonal decomposition involving both
gauge symmetries (\ref{symmshift1}) and (\ref{symmshift2}), the scaling symmetry (\ref{symmscale}),
and the parameter $p$ itself. Having that and using the variational characterization
of the constrained maximizers in (\ref{K}), together with the conservation of $E$,
we are able to partially eliminate  the degeneracy  in the following sense:
\begin{itemize}
\item[(i)] the distance between the solution $\alpha(t)$ and the ground state orbit
$\mathcal{A}(p(t))$ is bounded in the norm $h^{1/2}$;
\item[(ii)] the parameter $p(t)$ may drift in time
towards smaller values compensated by the increasing $h^1$ distance
between the solution $\alpha(t)$ and the orbit $\mathcal{A}(p(t))$.
\end{itemize}
More precisely, we have the following
nonlinear orbital stability result for the ground state:

\begin{theorem}
\label{theorem-ground-state}
For every $p_0 \in (0,1)$ and every small $\epsilon > 0$,
there is $\delta > 0$ such that for every initial data
$\alpha(0) \in h^1$ satisfying $\| \alpha(0) - A(p_0) \|_{h^1} \leq \delta$,
 the corresponding unique solution $\alpha(t) \in C(\mathbb{R}_+,h^1)$
of (\ref{flow}) satisfies for all $t$
\begin{equation}
\label{bound-ground-state}
\mathrm{dist}_{h^{1/2}} \left(\alpha(t) - \mathcal{A}(p(t))\right) \leq \epsilon,
\end{equation}
and
\begin{equation}
\label{bound-ground-state-bigger}
 \mathrm{dist}_{h^{1}} \left(\alpha(t) - \mathcal{A}(p(t)) \right) \lesssim \epsilon + (p_0 - p(t))^{1/2}
\end{equation}
for some continuous function $p(t) \in [0,p_0]$.
\end{theorem}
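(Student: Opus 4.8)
The plan is to run a Lyapunov argument built on the conserved functional $\Lambda(\alpha):=Q(\alpha)^2-H(\alpha)$, which by Theorem~\ref{theorem-bound} is nonnegative and vanishes exactly on the ground state family \eqref{r1}, supplemented by the second conserved quantity $E$; since $H$, $Q$ and $E$ are all conserved along the flow by Theorem~\ref{theorem-evolution}, $\Lambda(\alpha(t))$ stays $O(\delta^2)$ and $E(\alpha(t))$ stays frozen. First I would introduce a modulation decomposition: by the implicit function theorem, for $\alpha$ in a small $h^{1/2}$-neighbourhood of $\bigcup_{p\in[0,p_0]}\mathcal A(p)$ one writes $\alpha=e^{i\theta}\bigl(a\,e^{i\mu\cdot}A(p)+v\bigr)$ with parameters $(\theta,\mu,a,p)$ near $(0,0,1,p_0)$ and a remainder $v$ subject to four real orthogonality conditions, one for each generator of the symmetries \eqref{symmshift1}, \eqref{symmshift2}, \eqref{symmscale} and one for the internal direction $\partial_p A(p)$; alternatively, one first uses the scaling symmetry and conservation of $Q$ to normalise $Q(\alpha(t))\equiv Q(\alpha(0))$ and drops $a$. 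The parameters depend continuously on $t$ by the implicit function theorem and Theorem~\ref{theorem-evolution}.

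The next step is a coercivity estimate for the Hessian of $\Lambda$ at $A(p)$. Expanding in $v$ and using that $A(p)$ is a critical point of $\Lambda$ (this follows from the standing-wave equation \eqref{sys} with $\lambda=1$, together with $H(A(p))=Q(A(p))=1$), one gets $\Lambda(\alpha)=\tfrac12\langle\mathcal L_p v,v\rangle+N_p(v)$, where $\mathcal L_p$ is a bounded self-adjoint operator on $h^{1/2}$ with leading part $2\|v\|_{h^{1/2}}^2$ and the remainder obeys $|N_p(v)|\lesssim\|v\|_{h^{1/2}}^3+\|v\|_{h^{1/2}}^4$ (the quartic term is $-H(v)$, controlled by Theorem~\ref{theorem-bound}). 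Since $\Lambda\ge0$ vanishes on the whole four-parameter family \eqref{r1}, the operator $\mathcal L_p$ is nonnegative with a four-dimensional kernel, spanned by $A(p)$, $iA(p)$, $inA(p)$ and $\partial_p A(p)$ (the derivatives of the family in its four parameters). The key analytic input, of independent interest because of the supersymmetric factorisation of $\mathcal L_p$, is that $0$ is isolated in the spectrum of $\mathcal L_p$, so that $\langle\mathcal L_p v,v\rangle\gtrsim\|v\|_{h^{1/2}}^2$ on the orthogonal complement of $\ker\mathcal L_p$, uniformly for $p$ in compact subsets of $[0,1)$. Combining this with the modulation orthogonality conditions, the smallness of $\Lambda$, and a continuity argument absorbing the cubic remainder, one gets $\|v(t)\|_{h^{1/2}}\lesssim\delta$ for all $t\ge0$, i.e.\ \eqref{bound-ground-state}.

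To obtain the $h^1$ statement \eqref{bound-ground-state-bigger} I would use conservation of $E$. Since $E$ is invariant under the phase symmetries and, after normalising the scale, the decomposition is orthogonal, expanding $E=\|\cdot\|_{h^1}^2$ gives
\[
  \|v(t)\|_{h^1}^2=E(\alpha(0))-E\bigl(A(p(t))\bigr)-2\,\Re\sum_{n\ge0}(n+1)^2\,\overline{A_n(p(t))}\,v_n(t).
\]
The cross term is $O(\|v\|_{h^{1/2}})=O(\delta)$ because $A(p)$ decays exponentially (Cauchy--Schwarz splitting $(n+1)^2=(n+1)^{3/2}(n+1)^{1/2}$), and $E(\alpha(0))=E(A(p_0))+O(\delta)$. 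Using the explicit value $E(A(p))=\tfrac{1+p^2}{1-p^2}$ from \eqref{GS-conserved}, strictly increasing with derivative bounded above and below on $[0,p_0]$, nonnegativity of $\|v(t)\|_{h^1}^2$ forces $p(t)\le p_0+O(\delta)$ --- $p$ cannot drift upward --- and, after shrinking $\delta$ and truncating the modulation parameter at $p_0$, one may take $p(t)\in[0,p_0]$. Conversely, wherever $p(t)$ has drifted downward, the same identity gives $\|v(t)\|_{h^1}^2\lesssim\delta+\bigl(E(A(p_0))-E(A(p(t)))\bigr)\lesssim\delta+(p_0-p(t))$, which is \eqref{bound-ground-state-bigger}. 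A continuation argument then closes the loop: as long as $\|v\|_{h^{1/2}}$ stays below the threshold required by the decomposition --- which the coercivity estimate guarantees once $\delta$ is small, uniformly down to $p=0$ --- the modulation parameters remain well defined and both bounds persist for all $t\ge0$.

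The main obstacle is the degeneracy itself and the chain of compromises it forces. Because $\mathcal L_p$ has a four-dimensional kernel, one dimension more than the three-dimensional symmetry orbit \eqref{orbit}, the modulation must also absorb the non-symmetry parameter $p$, whose value is then left undetermined by $\Lambda$ (which vanishes on the whole family) and must be tracked through $E$ instead. But $\Lambda=Q^2-H$ is coercive only in the weak norm $h^{1/2}$ --- unavoidably, since $H$ and $Q^2$ are bounded, and no better, on $h^{1/2}$ --- whereas $E$ is an $h^1$ quantity whose value along the family is the strictly increasing function $\tfrac{1+p^2}{1-p^2}$, so $E$ reads off $p$ but its quadratic part contributes no sign beyond positivity. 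The outcome is one-sided control of the drift and the degraded $h^1$ bound \eqref{bound-ground-state-bigger}, in which a downward drift of $p$ is offset by an $O\bigl((p_0-p)^{1/2}\bigr)$ loss in the $h^1$ distance. The remaining points, which I expect to be the most delicate, are (a) the uniform-in-$p$ spectral gap for $\mathcal L_p$ via its supersymmetric structure, including the verification that $\ker\mathcal L_p$ is exactly four-dimensional; and (b) keeping the four-parameter decomposition valid as $p(t)$ possibly migrates through $(0,p_0]$ towards the single-mode state at $p=0$, where the local-phase direction $inA(p)$ degenerates and the orbit collapses to one dimension (the case handled separately in Theorem~\ref{theorem-one-mode}).
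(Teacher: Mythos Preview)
Your proposal is correct and follows essentially the same route as the paper. The paper carries out exactly the four-parameter modulation decomposition you describe (Lemma~\ref{lem-orthogonal-p}), establishes the $h^{1/2}$ coercivity on the symplectic orthogonal complement via the explicit spectra $\sigma(L_-)=\{\dots,-2,-1,0,0\}$ and $\sigma(L_+)=\{\dots,-2,-1,0,\lambda_*(p)\}$ computed in Lemma~\ref{lemma-L-2} (your point~(a)), and then uses conservation of $E$ precisely as you outline to get the one-sided bound on $p(t)$ and the $h^1$ estimate (Lemmas~\ref{lem-control-p} and~\ref{lem-control-extended}). The only cosmetic difference is that the paper works with $\Delta(c)=c^2(Q-1)-\tfrac12(H-1)$ and the operators $L_\pm(p)$ coming from the second variation of $K=\tfrac12 H-\lambda Q$, rather than with $\Lambda=Q^2-H$ directly; since $Q(A(p))=1$, these agree to quadratic order and your Hessian $\mathcal L_p$ differs from $-(L_+\oplus L_-)$ only by the rank-one piece $2Q'\otimes Q'$, which is exactly what absorbs the single positive eigenvalue $\lambda_*(p)$ of $L_+$ under the constraint $\langle MA(p),a\rangle=0$ in the paper's Lemma~\ref{lemma-L-4}.
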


\begin{rem}
The drift term in \eqref{bound-ground-state-bigger} is, of course, undesired for the orbital stability
of each point in the ground state family. It is an interesting open problem whether it reflects the
technical limitation of our approach or, indeed, the drift towards the orbit $\mathcal{A}(p)$
with a smaller $p(t) \in [0,p_0)$ can actually occur.
\end{rem}

Although it is not needed for the orbital stability, we shall give an explicit description of the spectrum of the linearized operator around the ground state. We believe that the results of this spectral analysis are interesting on their own. In particular, some intriguing algebraic properties of the spectrum may provide a hint in searching for a Lax pair for the conformal flow.

The conformal flow system \eqref{flow} is structurally similar to the Fourier representations
of the cubic Szeg\H{o} equation \cite{GG10,GG12,GG15} and the Lowest Landau Level (LLL) equation \cite{BBCE,Hani2,Hani3}.
These two  equations also possess ground states that saturate inequalities analogous to \eqref{bound-H}.
Their nonlinear orbital stabilities were established in \cite{GG10} and \cite{Hani1,Hani2}, respectively,
by compactness-type arguments. Such arguments are shorter compared to the Lyapunov approach, however
we prefer the latter because it gives a hands-on control of  the perturbations and, more importantly,
can also be  applied to \emph{local} constrained minimizers (or maximizers). Having said this,
we shall also sketch an alternative proof of Theorem~4, following very closely the proof
of orbital stability of the ground state for the cubic Szeg\H{o} equation given in \cite{GG10}.
As we shall see, both approaches lead to the essentially equivalent conclusions, in particular
the compactness approach also does not eliminate the possibility of drift of $p(t)$.

The reason that such drift is not present for perturbations of the ground state of
the cubic Szeg\H{o} equation is due to the fact that this ground state  is a maximizer
of energy under \emph{two} constraints (rather than one, as in our case). On the other hand,
for the ground state of the LLL equation, which is the maximizer of energy under one
constraint (as in our case), the drift of the parameter analogous to $p$  is included into consideration
by an action of the symmetry transformation (namely, the magnetic translation). Unfortunately,
no analogue of the additional symmetry transformation exists for the conformal flow, as far as we can see.

The paper is organized as follows. Theorems \ref{theorem-evolution} and  \ref{theorem-bound}  are proved in
sections \ref{section-Cauchy} and \ref{section-characterization}, respectively.
Section \ref{section-stability} introduces the second variation
of the energy function and the spectral stability problem.
Sections \ref{section-one-mode} and \ref{section-geometric-series} are devoted to the
spectral stability analysis of the single-mode states and the ground state.
Theorems  \ref{theorem-one-mode} and \ref{theorem-ground-state} are proved
in sections \ref{section-nonlinear} and \ref{section-ground-state}, respectively.
%Section \ref{section-conclusion} concludes the paper with open questions.
Appendix A gives some summation identities used in the paper.

\section{Global solutions to the Cauchy problem}
\label{section-Cauchy}

The global well-posedness of the conformal flow for $h^1$ initial data,
stated in Theorem~\ref{theorem-evolution}, is a simple corollary of the following local well-posedness result
and the conservation of $E$.
\begin{lemma}
\label{lemma-evolution}
For every initial data $\alpha(0) \in h^1$, there exists time $T > 0$ and a unique
solution $\alpha(t) \in C((-T,T), h^1)$ of (\ref{flow}). Moreover, the map $\alpha(0)
\mapsto \alpha(t)$ is continuous in $h^1$ for every $t \in (-T,T)$.
\end{lemma}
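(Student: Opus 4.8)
The plan is to set up the system \eqref{flow} as an ODE in the Banach space $h^1$ and invoke the standard Picard iteration / contraction mapping argument. First I would rewrite \eqref{flow} in the integral form
\begin{equation*}
\alpha_n(t) = \alpha_n(0) - \frac{i}{n+1} \int_0^t \sum_{j=0}^\infty \sum_{k=0}^{n+j} S_{njk,n+j-k}\, \bar\alpha_j(\tau)\alpha_k(\tau)\alpha_{n+j-k}(\tau)\, d\tau,
\end{equation*}
and define the nonlinear map $F(\alpha)_n = \frac{1}{n+1}\sum_{j,k} S_{njk,n+j-k}\,\bar\alpha_j\alpha_k\alpha_{n+j-k}$. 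The heart of the matter is the claim that $F: h^1 \to h^1$ is well defined and locally Lipschitz, in fact that it maps bounded sets of $h^1$ into $h^1$ with a cubic bound $\|F(\alpha)\|_{h^1} \lesssim \|\alpha\|_{h^1}^3$. The crucial structural fact is the pointwise bound $0 \le S_{njk,n+j-k} = \min(n,j,k,n+j-k)+1 \le \min(k,n+j-k)+1$, which lets me dominate the quadratic weight coming from the $(n+1)$ prefactor against the summation variables; the condition $k + (n+j-k) = n+j$ keeps the indices paired.

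The key steps, in order, are: (1) prove the convolution-type estimate that for $\alpha,\beta,\gamma \in h^1$ the sequence with $n$-th entry $\frac{1}{n+1}\sum_{j=0}^\infty\sum_{k=0}^{n+j} S_{njk,n+j-k}\,\bar\beta_j\gamma_k\gamma_{n+j-k}$ lies in $h^1$ with norm $\lesssim \|\beta\|_{h^1}\|\gamma\|_{h^1}^2$ (and the analogous trilinear form), using $S_{njk,n+j-k}\le \min(k,n+j-k)+1$ together with $(n+1)\le (k+1)+(n+j-k+1)$ and Young's inequality for the resulting $\ell^1\!\ast\!\ell^1$-type sums of the weighted sequences $(j+1)|\beta_j|$, $(k+1)|\gamma_k|$, etc.; (2) deduce that $F$ is cubic and locally Lipschitz on $h^1$, hence $C^1$; (3) apply the Picard--Lindelöf theorem in $h^1$ to get a unique local solution $\alpha \in C((-T,T),h^1)$ with $T$ depending only on $\|\alpha(0)\|_{h^1}$; (4) obtain continuous dependence on initial data from the Lipschitz bound via Grönwall. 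Finally, Theorem~\ref{theorem-evolution} follows because $E(\alpha)$ is conserved and $E(\alpha) = \|\alpha\|_{h^1}^2$, so the $h^1$ norm stays bounded on the maximal interval of existence, ruling out finite-time blow-up and forcing $T=\infty$.

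The main obstacle is step (1): establishing the trilinear estimate in $h^1$ requires carefully exploiting the combinatorial structure of $S_{njk,n+j-k}$ and the index constraint $j + k = n + (n+j-k)$ hiding in the double sum, since a naive bound $S \le n+1$ would cancel the prefactor but leave an unweighted triple sum over infinitely many $j$, which diverges. The resolution is to split $\min(n,j,k,n+j-k)+1$ so that the weight is always absorbed by one of the two "internal" indices $k$ or $n+j-k$ whose sum is controlled, turning the estimate into a genuine convolution inequality between the $\ell^1$-sequences $\big((m+1)|\alpha_m|\big)_m$. Once this estimate is in hand, everything else is the routine fixed-point machinery.
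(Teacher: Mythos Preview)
Your overall strategy---rewriting the system in Duhamel form, proving a cubic bound $\|F(\alpha)\|_{h^1}\lesssim\|\alpha\|_{h^1}^3$ for the nonlinearity, and then invoking Picard--Lindel\"of with Gr\"onwall for continuous dependence---matches the paper exactly, and so do steps (2)--(4). The difference lies entirely in how the trilinear estimate in step (1) is obtained. The paper does \emph{not} avoid the ``naive'' bound $S_{njk,n+j-k}\le n+1$; on the contrary, it uses precisely that inequality to cancel the $(n+1)^{-1}$ prefactor, leaving $|F(\alpha)_n|\le\sum_{j}\sum_{k=0}^{n+j}|\alpha_j|\,|\alpha_k|\,|\alpha_{n+j-k}|$. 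This is the $n$-th Fourier coefficient of $\bar u\,u^2$ with $u(\theta)=\sum_{n\ge0}|\alpha_n|e^{in\theta}$, so $\|F(\alpha)\|_{h^1}\le\|\bar u\,u^2\|_{H^1(\mathbb{S})}\lesssim\|u\|_{H^1(\mathbb{S})}^3=\|\alpha\|_{h^1}^3$ by the Banach-algebra property of $H^1(\mathbb{S})$. So your worry that this route ``diverges'' is unfounded; the sum over $j$ is controlled because $H^1$ is an algebra.

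Your alternative route---absorbing the weight into the internal indices and using Young's inequality---also works, but one detail needs correction: the sequence $\big((m+1)|\alpha_m|\big)_m$ lies in $\ell^2$, not $\ell^1$, for general $\alpha\in h^1$ (take $\alpha_m=(m+1)^{-3/2-\varepsilon}$ with $0<\varepsilon<1/2$). The fix is immediate: using $S_{njk,n+j-k}\le k+1$ one obtains
\[
(n+1)|F(\alpha)_n|\le\sum_{j\ge0}|\alpha_j|\,\big((\cdot+1)|\alpha|\ast|\alpha|\big)_{n+j},
\]
and now Young's inequality $\ell^2\ast\ell^1\to\ell^2$, applied twice together with the embedding $h^1\hookrightarrow\ell^1$ (Cauchy--Schwarz: $\sum_m|\alpha_m|\lesssim\|\alpha\|_{h^1}$), yields the cubic bound. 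This direct convolution argument is more elementary than the paper's---it avoids passing to the generating function and invoking the $H^1(\mathbb{S})$ algebra property---at the cost of a couple of extra lines of computation.
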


\begin{proof}
Let us rewrite the  system (\ref{flow}) in the integral form
\begin{equation}
\label{Duhamel-form}
\alpha(t) = \alpha(0) - i \int_0^t F(\alpha(\tau)) d \tau,
\end{equation}
where the vector field $F$ is given by
\begin{equation}
\label{vector-field}
[F(\alpha)]_n := \sum\limits_{j=0}^\infty \sum_{k=0}^{n+j} \frac{\min(n,j,k,n+j-k)+1}{n+1}
\,\bar\alpha_j \alpha_k \alpha_{n+j-k}\,.
\end{equation}
Let us show that
\begin{equation}\label{F-bound}
\| F(\alpha) \|_{h^1} \lesssim \|\alpha \|^3_{h^1}.
\end{equation}
Indeed, by using the Fourier transform
$$
u(\theta) = \sum_{n \in \mathbb{N}} |\alpha_n| e^{i n \theta}, \quad \theta \in \mathbb{S}
$$
and the inequality
$$
\frac{\min(n,j,k,n+j-k)+1}{n+1} \leq 1,
$$
we obtain
$$
\| F(\alpha) \|_{h^1} \leq \left\| \sum_{j=0}^\infty \sum_{k=0}^{n+j} |\alpha_j| |\alpha_k| |\alpha_{n+j-k}|  \right\|_{h^1}
\lesssim \| u^3 \|_{H^1(\mathbb{S})} \lesssim \| u \|^3_{H^1(\mathbb{S})} \lesssim \| \alpha \|^3_{h^1},
$$
where we have used the fact that $H^1(\mathbb{S})$ is a Banach algebra with respect to pointwise multiplication.
From \eqref{F-bound} it follows by  the Picard method based on the fixed-point argument on a small time interval
$(-T,T)$ that there exists a local solution to the integral equation (\ref{Duhamel-form}) for
every initial data $\alpha(0) \in h^1$  and the mapping $\alpha(0)\mapsto \alpha(t)$ is continuous.
The time $T$ is inversely proportional to $\| \alpha(0) \|_{h^1}^2$.
\end{proof}

Conservation of $H$, $Q$, and $E$ follow from the symmetries of the conformal flow system (\ref{flow}).
Since the squared norm $\|\alpha(t)\|^2_{h^1}=E(\alpha(t))$ is conserved in time,
the lifespan $T$ in Lemma \ref{lemma-evolution} can be extended to infinity, which concludes the proof of Theorem~1.

\begin{rem}
The  global well-posedness result in Theorem~1 can be extended to the spaces $h^s$ with $s>1/2$
and quite possibly even to the critical
space $h^{1/2}$, in analogy to the global well-posedness result for the cubic Szeg\H{o} equation
(see Theorem 2.1 in \cite{GG10}). However,
the result of Theorem \ref{theorem-evolution} is sufficient for our purposes because the orbital
stability analysis relies on the global solution in $h^1$ only.
\end{rem}

\section{Global energy bound}
\label{section-characterization}

The following lemma establishes the inequality (\ref{bound-H})  and shows that it is saturated only for the geometric sequences. Theorem \ref{theorem-bound} is the reinstatement of the same result.

\begin{lemma}
\label{lem-bound}
For every complex-valued sequence $A \in h^{1/2}$ the following inequality holds
\begin{equation}
\label{H-inequality}
\sum\limits_{n=0}^{\infty}\sum\limits_{j=0}^{\infty} \sum\limits_{k=0}^{n+j} \left[\min(n,j,k,n+j-k)+1\right] \,\bar{A}_n \bar{A}_j A_k A_{n+j-k} \leq
\left[ \sum_{n=0}^{\infty} (n+1) |A_n|^2 \right]^2.
\end{equation}
Moreover, this inequality  is saturated if and only if  $A_n = c p^n$ for $c,p\in  \mathbb{C}$ with $|p|<1$.
\end{lemma}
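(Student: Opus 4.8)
The plan is to rewrite the quartic form $H(A)$ as an $\ell^2$-norm of a suitable bilinear expression in the $A_n$, so that the inequality becomes an instance of the triangle inequality (or Cauchy–Schwarz), with equality forcing a rigid algebraic structure. The key observation is the combinatorial identity
\begin{equation}
\label{min-identity}
\min(n,j,k,n+j-k) + 1 = \sum_{m=0}^{\infty} \chi_m(n)\,\chi_m(j)\,\chi_m(k)\,\chi_m(n+j-k),
\end{equation}
where $\chi_m(\ell) = 1$ if $\ell \geq m$ and $0$ otherwise. Indeed, the right-hand side counts the integers $m \geq 0$ that are $\leq$ each of the four indices, which is exactly $\min(n,j,k,n+j-k)+1$ since all indices are nonnegative. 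I would verify this first; it is the structural heart of the argument.

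Next I would substitute \eqref{min-identity} into the left-hand side of \eqref{H-inequality}. Because the index constraint is $0 \le k \le n+j$ and $n+j-k \ge 0$ automatically, interchanging the sum over $m$ with the sums over $n,j,k$ gives
\begin{equation}
\label{H-as-sum}
H(A) = \sum_{m=0}^{\infty} \left( \sum_{n \geq m} \sum_{j \geq m} \sum_{\substack{k \geq m \\ n+j-k \geq m}} \bar A_n \bar A_j A_k A_{n+j-k} \right).
\end{equation}
The plan is to recognize the inner sum, for each fixed $m$, as $|c_m(A)|^2$ for an appropriate quantity $c_m(A)$. Introducing the generating-function / convolution viewpoint, set $B^{(m)}_\ell = A_{\ell}$ for $\ell \ge m$ (and $0$ otherwise); then the constraints $k \ge m$, $n+j-k\ge m$ pair up so that the inner sum is $\sum_{\ell} \big| (B^{(m)} * A)_? \big|^2$ — more precisely, writing $u(\theta) = \sum_n A_n e^{in\theta}$ and $u_m(\theta) = \sum_{n \geq m} A_n e^{in\theta}$, a short computation identifies the inner sum with $\frac{1}{2\pi}\int_{\mathbb{S}} |u_m(\theta)|^2 \overline{u_m(\theta)} u(\theta)\, \cdots$; I would instead organize it cleanly as the squared modulus of the bilinear map $(A,A) \mapsto \sum_{k \ge m} \bar A_{k} A_{k + r}$ summed appropriately over the ``shift'' $r = n - k \geq 0$ coming from $n,j \ge m$. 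The cleanest packaging: the inner sum equals $\sum_{r \in \mathbb{Z}} \big| \sum_{k \ge \max(m, m-r)} \bar A_k A_{k+r} \big|^2 \geq 0$, manifestly a sum of squares. Then \eqref{H-as-sum} exhibits $H(A) \ge 0$ as well as the desired bound after comparing with $Q(A)^2 = \big(\sum_n (n+1)|A_n|^2\big)^2 = \big(\sum_m \sum_{n \ge m} |A_n|^2\big)^2 = \sum_{m,m'} \big(\sum_{n\ge m}|A_n|^2\big)\big(\sum_{n \ge m'}|A_n|^2\big)$, using $\sum_m \chi_m(n) = n+1$.

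The comparison step is the main obstacle: I must bound the $m$-sum of the squared bilinear terms by the double $m,m'$-sum coming from $Q^2$. For fixed $m$, Cauchy–Schwarz on the inner sum of \eqref{H-as-sum} gives it $\le \big(\sum_{n \ge m}|A_n|^2\big)\big(\sum_{j\ge m}|A_j|^2\big)$ — this is the natural estimate since the four indices split as $(n,j)$ from the ``$\bar A$'' side and $(k, n+j-k)$ from the ``$A$'' side, both ranging over pairs with entries $\ge m$ and with equal index-sums. Summing over $m$ yields exactly $\sum_m \big(\sum_{n\ge m}|A_n|^2\big)^2 \le \sum_{m,m'}\big(\sum_{n\ge m}|A_n|^2\big)\big(\sum_{n\ge m'}|A_n|^2\big) = Q(A)^2$, where the middle inequality is just dropping cross terms (all nonnegative). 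This proves \eqref{H-inequality}. For the equality case, I would trace back: equality forces, for every $m$, equality in the Cauchy–Schwarz step, and also forces all the off-diagonal terms $m \ne m'$ in $Q^2$ to vanish — but those never vanish unless $A$ is supported on a single index, which contradicts saturation unless we are more careful. The resolution is that equality in each per-$m$ Cauchy–Schwarz is the binding condition and it says the two ``vectors'' $(\bar A_n)_{n \ge m}$ and $(\bar A_{n+j-k})$ indexed compatibly are proportional for all $m$; unwinding this proportionality across consecutive $m$ gives $A_{n+1}/A_n$ independent of $n$ on the support, i.e. $A_n = c p^n$, with $|p| < 1$ forced by $A \in \ell^2$. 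I expect the delicate point to be making the equality analysis rigorous — in particular showing the ratio is forced to be a single constant $p$ rather than merely locally constant — which I would handle by an induction on $n$ using the $m=n$ and $m=n+1$ instances of the saturated inequality. Conversely, for $A_n = cp^n$ one checks saturation directly by summing the geometric series, using the identities collected in Appendix A.
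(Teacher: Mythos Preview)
Your decomposition via $\min(n,j,k,n+j-k)+1=\sum_{m\ge 0}\chi_m(n)\chi_m(j)\chi_m(k)\chi_m(n+j-k)$ is correct and elegant, and it does give
\[
H(A)=\sum_{m\ge 0}\sum_{N\ge 2m}\bigl|S_m(N)\bigr|^2,\qquad S_m(N)=\sum_{\substack{k\ge m\\ N-k\ge m}}A_k A_{N-k},
\]
i.e.\ the inner sum for fixed $m$ equals $\|u_m\|_{L^4(\mathbb{S})}^4$ with $u_m(\theta)=\sum_{n\ge m}A_n e^{in\theta}$. The fatal error is the ``Cauchy--Schwarz'' step: you claim this is $\le\bigl(\sum_{n\ge m}|A_n|^2\bigr)^2=\|u_m\|_{L^2(\mathbb{S})}^4$, but on the circle with normalized measure one has $\|u_m\|_{L^4}\ge\|u_m\|_{L^2}$, so the inequality points the \emph{wrong way}. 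Concretely, take $A_0=A_1=1$ and all other $A_n=0$: then for $m=0$ one computes $S_0(0)=1$, $S_0(1)=2$, $S_0(2)=1$, hence the inner sum is $1+4+1=6$, whereas $\bigl(\sum_{n\ge 0}|A_n|^2\bigr)^2=4$. So the per-$m$ bound fails already at $m=0$.

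You actually notice a symptom of this yourself: in the equality analysis you observe that the step $\sum_m a_m^2\le\bigl(\sum_m a_m\bigr)^2$ can only be saturated when $A$ is supported on a single index, which is incompatible with the known maximizers $A_n=cp^n$ for $p\ne 0$. That contradiction is not a ``delicate point'' to be patched later; it is telling you the chain of inequalities is too lossy (and in fact one link is reversed). The identity \eqref{min-identity} is still a good starting point, but to make it work you cannot decouple the $m$-layers; the paper's proof proceeds differently, grouping instead by the total degree $N=n+j$ and proving, for each fixed $N$, the finite quadratic-form inequality
\[
\sum_{k=0}^{N}(k+1)(N+1-k)|x_k|^2-\sum_{j,k=0}^{N}\bigl[\min(j,N-j,k,N-k)+1\bigr]\bar x_j x_k\ge 0
\]
for $x_k=A_kA_{N-k}$, via an explicit sum-of-squares identity established by induction on $N$. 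Equality for every $N$ then forces $A_kA_{N-k}$ to be independent of $k$, which immediately yields $A_n=cp^n$.
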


\begin{proof}
The left-hand side of (\ref{H-inequality}) can be written as
\begin{equation}\label{sl}
\sum\limits_{n=0}^{\infty}\sum\limits_{j=0}^{n} \sum\limits_{k=0}^{n} \left[\min(j,n-j,k,n-k)+1\right] \,\bar{A}_j \bar{A}_{n-j} A_k A_{n-k},
\end{equation}
whereas the right-hand side of (\ref{H-inequality}) can be written as
\begin{equation}\label{sr}
\sum_{n=0}^{\infty} \sum_{k=0}^{n} (k+1) (n+1-k) |A_k|^2 |A_{n-k}|^2.
\end{equation}
Let us fix $n \in \mathbb{N}$ and denote $x_k := A_k A_{n-k}$ for $0 \leq k \leq n$.
To prove the inequality \eqref{H-inequality} it suffices to show that the following  quadratic form is nonnegative
$$
\sum_{k=0}^{n} (k+1) (n+1-k) |x_k|^2 -
\sum\limits_{j=0}^{n} \sum\limits_{k=0}^{n} \left[\min(j,n-j,k,n-k)+1\right] \overline{x}_j x_k.
$$
Let us prove this for odd $n = 2N + 1$ with $N \in \mathbb{N}$. The proof for even $n$ is analogous.
Since $x_k = x_{n-k}$ by definition, we can simplify the sums \eqref{sl} and \eqref{sr}  as follows:
\begin{equation}\label{s2}
\sum\limits_{j=0}^{n} \sum\limits_{k=0}^{n} \left[\min(j,n-j,k,n-k)+1\right] \overline{x}_j x_k =
4 \sum\limits_{j=0}^{N} \sum\limits_{k=0}^{N} \left[\min(j,k)+1 \right] \overline{x}_j x_k.
\end{equation}
and
\begin{equation}\label{s1}
\sum_{k=0}^{n} (k+1) (n+1-k) |x_k|^2 = 2 \sum_{k=0}^{N} (k+1) (2N+2-k) |x_k|^2.
\end{equation}
Subtracting \eqref{s2} from \eqref{s1} we obtain the following identity
\begin{eqnarray}\label{min}
& 2 \sum_{k=0}^{N} (k+1) (2N+2-k) |x_k|^2 -
4 \sum\limits_{j=0}^{N} \sum\limits_{k=0}^{N} \left[\min(j,k)+1 \right] \overline{x}_j x_k \nonumber\\
& = 4 \sum_{j=0}^{N-1} \sum_{k=j+1}^N (j+1) |x_j - x_k |^2 \geq 0. \label{identity}
\end{eqnarray}
The identity (\ref{identity}) is proven by induction in $N$. The case $N = 0$ is trivial.
The case $N=1$ is verified by inspection. For general $N$, the difference
between the left hand sides of \eqref{min} evaluated at $N+1$ and $N$ is
\begin{eqnarray*}
& \phantom{t} & 4\sum\limits_{k=0}^N (k+1) \left(|x_k|^2 - \bar x_{N+1} x_k - x_{N+1} \bar x_k\right) + 2 (N+1)(N+2) |x_{N+1}|^2 \\
& = & 4 \sum_{k=0}^N (k+1) |x_k - x_{N+1}|^2,
\end{eqnarray*}
which is equal to the difference between the right hand sides of \eqref{min} evaluated at $N+1$ and $N$.
By induction, the identity (\ref{identity}), which holds for $N = 0,1$, will hold for any $N \in \mathbb{N}$.

Combining \eqref{min} with a similar result for $n = 2N$ and summing up with respect to $N \in \mathbb{N}$,
we obtain the inequality (\ref{H-inequality}). The inequality is saturated when the double sum in \eqref{min} vanishes which happens
 if $x_k = A_k A_{n-k}$ is independent of $k$ for every $0 \leq k \leq n$ (but may depend on
$n$ for $n \in \mathbb{N}$). This is true if and
only if $A_k = c p^k$ for some $c,p \in \mathbb{C}$.
\end{proof}

\begin{rem}
Theorem~2 provides an alternative variational characterization of the ground state family (\ref{r1}).
Indeed, from Theorem \ref{theorem-bound}, we know that $G := Q^2-H$ attains
the global minimum equal to zero at the geometric sequence $A_n=c p^n$, hence its first variation at $A$ vanishes
\begin{equation}\label{delta-G}
   G'(A) = 2 Q(A) Q'(A) - H'(A)=0.
\end{equation}
Comparing this with the variational characterization \eqref{K} we see that $A_n = c p^n$ is
a solution of (\ref{sys}) with $\lambda = Q(A) = |c|^2/(1-|p|^2)^2$.
\end{rem}

\section{Second variation and the spectral stability}
\label{section-stability}
Here we compute the second variation of the functional $K$, defined in \eqref{K}, at a general stationary solution  and use this result to formulate the spectral stability problem.

Let $\alpha = A + a + i b$, where $A$ is a real root of the algebraic system \eqref{sys}, whereas $a$
and $b$ are real and imaginary parts of the perturbation.
Because the stationary solution $A$ is a critical point of $K$, the first variation of $K$
vanishes at $\alpha = A$ and the second variation of $K$ at $\alpha = A$
can be written as a quadratic form associated with the Hessian
operator. In variables above, we obtain the quadratic form in the diagonalized form:
\beq
\label{expansions-K}
K(A + a + i b) - K(A) = \langle L_+ a, a \rangle + \langle L_- b, b \rangle + \mathcal{O}(\| a \|^3 + \| b \|^3),
\eeq
where $\langle \cdot, \cdot \rangle$ is the inner product in $\ell^2(\mathbb{N})$ and $\| \cdot \|$ is the induced norm.

After straightforward computations, we obtain the explicit form for the self-adjoint operators
$L_{\pm} : D(L_{\pm}) \to \ell^2$, where
$D(L_{\pm}) \subset \ell^2$ is the maximal domain of the unbounded operators $L_{\pm}$.
After the interchange of summations, we obtain
\begin{eqnarray}
\nonumber
(L_{\pm} a)_n & = & \sum_{j = 0}^{\infty} \sum_{k = 0}^{n+j} S_{njk,n+j-k} \left[ 2 A_j A_{n+j-k} a_k \pm A_k A_{n+j-k} a_j \right]
- (n+1) \lambda a_n \\
& = & (B_{\pm} a)_n - (n+1) \lambda a_n,
\label{Hessian}
\end{eqnarray}
where
$$
(B_{\pm} a)_n = \sum_{j = 0}^{\infty} \left[ 2\sum_{k = \max(0,j-n)}^\infty S_{njk,n+k-j} A_k A_{n+k-j}
\pm \sum_{k=0}^{n+j} S_{njk,n+j-k} A_k A_{n+j-k} \right] a_j.
$$

The Hessian operator given by the self-adjoint operators $L_{\pm}$ also defines the linearized
stability problem for the small perturbations to the stationary states.
Let us consider the following decomposition of solutions of the conformal flow (\ref{flow}),
\begin{equation}
\label{decomposition}
\alpha(t) = e^{-i \lambda t} \left[ A + a(t) + i b(t) \right],
\end{equation}
with real $A$, $a$ and $b$. When the nonlinear system  (\ref{flow}) is truncated at the linearized approximation
with respect to $a$ and $b$, we obtain the linearized evolution system
\beq
\label{evolution-linear}
M \frac{da}{dt} = L_- b, \quad M \frac{db}{dt} = -L_+ a,
\eeq
where $M = {\rm diag}(1,2,...)$ is the diagonal matrix operator of positive integers.
Substituting $a(t) = e^{\Lambda t} \texttt{a}$ and
$b(t) = e^{\Lambda t} \texttt{b}$ into \eqref{evolution-linear} we get the eigenvalue problem
\beq
\label{spectrum}
L_- \texttt{a} = \Lambda M \texttt{b}, \quad L_+ \texttt{b} = -\Lambda M \texttt{a}.
\eeq
We say that the stationary solution $A$ is {\em spectrally stable}
if all the eigenvalues $\Lambda$ lie on the imaginary axis.

\section{Spectral stability of the single-mode states}
\label{section-one-mode}

Here we compute $L_{\pm}$ and eigenvalues of the spectral stability
problem (\ref{spectrum}) for the single-mode states (\ref{1mode}).
Because of the scaling transformation (\ref{symmscale}), we set $c = 1$
and hence $\lambda = 1$. In this case, the self-adjoint operators
$L_{\pm} : D(L_{\pm}) \to \ell^2(\mathbb{N})$ are given explicitly by
\beq
\label{Hessian-one-mode}
(L_{\pm} a)_n = \left[ 2 \min(n,N) + 1 - n\right] a_n \pm \left[ \min(n,N,2N-n) + 1\right] a_{2N-n}, \quad n \in \mathbb{N},
\eeq
from which it is clear that $D(L_{\pm}) = h^{1}$.

We prove the following characterization of eigenvalues of $L_{\pm}$,
from which we obtain the spectral stability of the single-mode states (\ref{1mode}).

\begin{lemma}
\label{lemma1}
The single-mode state (\ref{1mode}) with $N \in \mathbb{N}$
is a degenerate saddle point of $K$ with $2N+1$ positive eigenvalues (counted with multiplicities),
the zero eigenvalue of multiplicity $2N+3$ and infinitely many negative eigenvalues bounded away from zero.
\end{lemma}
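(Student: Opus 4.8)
The plan is to analyze the two self-adjoint operators $L_{\pm}$ given explicitly by \eqref{Hessian-one-mode} separately, since the spectrum of $K$'s Hessian is the union of the spectra of $L_+$ and $L_-$, and then assemble the counts. The crucial observation is that the coefficients in \eqref{Hessian-one-mode} only couple the index $n$ to $2N-n$, so each operator leaves invariant the finite-dimensional subspace $V_N := \mathrm{span}\{e_0,\dots,e_{2N}\}$ of sequences supported on $\{0,1,\dots,2N\}$ (where the reflection $n\mapsto 2N-n$ acts) and also leaves invariant its orthogonal complement $V_N^\perp$ of sequences supported on $\{n : n > 2N\}$. On $V_N^\perp$ the term $a_{2N-n}$ vanishes (the index $2N-n$ is negative, so $A_{2N-n}=0$), and $\min(n,N)=N$, so $L_{\pm}$ acts diagonally there as multiplication by $2N+1-n$, which is strictly negative for $n\ge 2N+2$ (equal to $-1$ at $n=2N+1$, and $\le -1$ for all larger $n$, hence bounded away from zero). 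So $L_{\pm}$ each contribute infinitely many negative eigenvalues on $V_N^\perp$, bounded away from zero.

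Next I would restrict attention to the $(2N+1)$-dimensional space $V_N$. Here it is natural to further decompose into the symmetric and antisymmetric subspaces under the reflection $R: a_n \mapsto a_{2N-n}$, which has dimensions $N+1$ and $N$ respectively. On the symmetric subspace $L_{\pm}$ acts as $(2\min(n,N)+1-n) \pm (\min(n,N,2N-n)+1)$ on the "diagonal" in appropriate coordinates (with a factor accounting for the pairing), and similarly with a sign flip on the antisymmetric subspace; effectively $L_+$ restricted to $V_N$ is conjugate to the direct sum of two explicit tridiagonal-free (actually diagonal-plus-reflection) matrices whose eigenvalues can be read off, and likewise $L_-$. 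Concretely, setting $m = \min(n,2N-n)$ for $0\le n\le 2N$, the diagonal part is $2m+1-n+N-\min(n,N)$... rather than grind this out, the key structural point is that on $V_N$ one can diagonalize by pairing $n$ with $2N-n$: for each pair $\{n, 2N-n\}$ with $n<N$ one gets a $2\times2$ block, plus the fixed index $n=N$. Computing the $2\times2$ block eigenvalues for $L_+$ and for $L_-$ and counting signs should yield exactly: $L_-$ has one zero eigenvalue (from the gauge symmetry, the direction $A$ itself) and the rest on $V_N$ positive or... — here I must be careful, and I expect the final tally to be that the combined nonnegative eigenvalue count on $V_N$ is $2N+1$ positive plus the remaining zeros.

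For the zero eigenvalues I would instead argue structurally rather than by brute-force block computation: the kernel of the full Hessian is generated by the symmetries that fix the single-mode state's modulus. The global phase shift \eqref{symmshift1} contributes the vector $iA$, i.e. a zero mode of $L_-$; the scaling \eqref{symmscale} and the constraint direction contribute a zero mode of $L_+$ (differentiating the standing-wave equation \eqref{sys} in $\lambda$, using $\lambda=|c|^2$); and the local phase shift \eqref{symmshift2} applied to $A_n = \delta_{nN}$ gives $ine_N$, but since this is proportional to $iA$ it does not give a new direction — instead, crucially, $N>0$ allows perturbations $e^{i n\mu}$ acting trivially, so one must look for the "hidden" extra kernel coming from the fact that the single-mode sits inside the larger ground-state-type family only degenerately. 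Counting: $L_-$ has a kernel of dimension (to be determined, at least $1$), $L_+$ has a kernel of dimension (at least $1$), summing to the claimed $2N+3$. I would verify the exact multiplicities by explicitly solving $L_\pm a = 0$ within $V_N$: the equation $L_+ a = 0$ reads $(2\min(n,N)+1-n)a_n = -(\min(n,N,2N-n)+1)a_{2N-n}$, a $2\times2$ linear system for each reflection-pair, and counting the pairs for which the $2\times2$ coefficient matrix is singular gives the dimension of $\ker L_+|_{V_N}$, and similarly for $L_-$.

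The main obstacle I anticipate is the bookkeeping in the finite-dimensional block: getting the precise counts $2N+1$ positive, $2N+3$ zero, rather than merely showing finiteness, requires carefully evaluating the $2\times 2$ eigenvalues for every reflection pair $\{n,2N-n\}$, $0\le n<N$, for both $L_+$ and $L_-$, handling the regime $n<N$ where $\min(n,N)=n$ versus the single central index $n=N$, and correctly identifying which blocks are degenerate (contributing to the kernel). Summation identities from Appendix A may be needed if I instead go the route of computing $\langle L_\pm a, a\rangle$ directly. The sign analysis on $V_N^\perp$ and the symmetry-based lower bound on the kernel dimension are routine; it is matching the kernel dimension exactly to $2N+3$ (neither more nor less) that is delicate, and I would do this by the explicit $2\times2$-system argument above, confirming the $N=0$ and $N=1$ cases by direct inspection as a sanity check before asserting the general pattern.
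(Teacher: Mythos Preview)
Your overall strategy---split $L_{\pm}$ into the $(2N+1)$-dimensional block on indices $\{0,\dots,2N\}$ and a diagonal tail, then reduce the finite block via the reflection $n \mapsto 2N-n$ to $N$ two-by-two systems plus one central entry---is exactly the paper's approach, and it works cleanly once executed.

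There are, however, two concrete errors. First, an arithmetic slip: at $n=2N+1$ the diagonal entry is $2N+1-n = 0$, not $-1$, so the tail of each of $L_+$ and $L_-$ contributes one zero eigenvalue (at $n=2N+1$) in addition to the strictly negative entries $\{-1,-2,\dots\}$ for $n\geq 2N+2$; two of the $2N+3$ zeros come from here, and the finite block can supply only $2N+1$. Second, your symmetry heuristic for the kernel is wrong and should be dropped: scaling does \emph{not} give a zero mode of $L_+$ (for $A_n=\delta_{nN}$ one computes $(L_+A)_N = 2(N+1)$, so $A$ is a positive-eigenvalue eigenvector, not a kernel vector). Just carry out the $2\times 2$ block computation you outlined: for the pair $(m,2N-m)$ with $0\le m<N$, both diagonal entries equal $m+1$ and both off-diagonal entries equal $\pm(m+1)$, so the block eigenvalues are $0$ and $2(m+1)$ in either case; at the fixed index $n=N$ the entry is $(N+1)\pm(N+1)$, i.e.\ $2(N+1)$ for $L_+$ and $0$ for $L_-$. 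Hence $\tilde L_+$ has $N+1$ positive and $N$ zero eigenvalues, $\tilde L_-$ has $N$ positive and $N+1$ zero, and adding the two tail zeros gives the stated totals.
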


\begin{proof}
The operators $L_{\pm}$ in (\ref{Hessian-one-mode}) consist of a $(2N+1)$-$(2N+1)$ block denoted by $\tilde{L}_{\pm}$
and a diagonal block with entries $\{2N+1-n\}_{n \geq 2N+1}$. The latter diagonal block
has one zero eigenvalue and all other eigenvalues are strictly negative. The former block
can be written in the form
$$
\tilde{L}_+ = \left[ \begin{array}{ccc} L_{11} & 0 & L_{12} \\ 0 & 2(N+1) & 0 \\ L_{12}^T & 0 & L_{22} \end{array} \right], \quad
\tilde{L}_- = \left[ \begin{array}{ccc} L_{11} & 0 & -L_{12} \\ 0 & 0 & 0 \\ -L_{12}^T & 0 & L_{22} \end{array} \right],
$$
where $L_{11} = {\rm diag}(1,2,...,N)$, $L_{12} = {\rm antidiag}(1,2,...,N)$, and $L_{22} = {\rm diag}(N,N-1,...,1)$.
Since the eigenvalue problem for $\tilde{L}_{\pm}$ decouples into $N$ pairs and one equation, we can count the
eigenvalues of $\tilde{L}_{\pm}$. The block $\tilde{L}_+$ has $N+1$ positive eigenvalues and the zero
eigenvalue of multiplicity $N$. The block $\tilde{L}_-$ has $N$ positive eigenvalues and the zero
eigenvalue of multiplicity $N + 1$. The assertion of the lemma follows by combining the count of all eigenvalues.
\end{proof}

\begin{lemma}
\label{lemma2}
The single-mode state (\ref{1mode}) with $N \in \mathbb{N}_+$ module to the gauge symmetry (\ref{symmshift1})
is a degenerate saddle point of $H$ under fixed $Q$
with $2N$ positive eigenvalues (counted with multiplicities),
the zero eigenvalue of multiplicity $2N+2$ and infinitely many negative eigenvalues bounded away from zero.
The single-mode state (\ref{1mode}) with $N = 0$ module to the gauge symmetry (\ref{symmshift1})
is a degenerate maximizer of $H$ under fixed $Q$ with a double zero eigenvalue.
\end{lemma}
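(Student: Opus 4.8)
The plan is to reduce the statement about $H$ under fixed $Q$ to the already-computed spectral count for the unconstrained functional $K$ in Lemma~\ref{lemma1}. Recall that by \eqref{K} we have $K = \frac12 H - \lambda Q$, so the second variation of $H$ at the single-mode state $A = \delta_{\cdot N}$, restricted to the tangent space of the constraint $\{Q = \text{const}\}$, is $2 L_{\pm}$ restricted to that tangent space (after splitting into real and imaginary parts as in \eqref{expansions-K}). The constraint surface $Q = \text{const}$ is codimension one; its tangent space at $A$ is the symplectic-orthogonal complement of the single direction generated by the gauge symmetry \eqref{symmshift1}, i.e. the direction $i A$. Thus the signature of the constrained Hessian is obtained from the signature of the full Hessian $L_{\pm}$ by removing exactly the contributions living along the gauge orbit, and tracking how the constraint ``eats'' one degree of freedom.

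Concretely, I would proceed as follows. First, identify the generator of \eqref{symmshift1}: differentiating $e^{i\theta}A$ at $\theta = 0$ gives the vector $iA$, i.e. in the $(a,b)$-decomposition this is the direction $b = A = \delta_{\cdot N}$, $a = 0$. One checks directly from \eqref{Hessian-one-mode} that $\tilde L_- A = 0$ (the single-mode state being a critical point of $K$, this is automatic and already visible in the block structure: the middle $2(N+1)$-entry sits in $\tilde L_+$, while the corresponding middle entry of $\tilde L_-$ is $0$, and $A$ is supported exactly on that middle coordinate $n = N$). Hence the gauge direction $iA$ lies in the kernel of the full Hessian. Restricting $H''$ to $\{Q = \text{const}\}$ and then further quotienting by this kernel direction removes one zero eigenvalue from the $L_-$ side; this accounts for the passage from the multiplicity $2N+3$ of the zero eigenvalue in Lemma~\ref{lemma1} to $2N+2$ here, and leaves the $2N+1$ positive eigenvalues and the infinitely many negative eigenvalues of Lemma~\ref{lemma1} essentially intact on the $L_+$ side. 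The constraint itself acts on the $L_-$ block (since $Q'(A)$ points along $iA$, a purely imaginary direction), so the $L_+$ count is untouched except that one must verify the constraint does not remove a positive eigenvalue — but since $Q'(A) \parallel iA$ is orthogonal to the whole real subspace on which $L_+$ acts, $L_+$ is unaffected and still contributes $N+1$ positive and $N$ zero eigenvalues. On the $L_-$ side one starts from $N$ positive, $N+1$ zero (Lemma~\ref{lemma1}), imposes the one constraint and quotients the gauge kernel: for $N \geq 1$ this yields $N$ positive and $N+1$ zero still (the constraint direction and the gauge direction coincide, so imposing the constraint and removing the gauge mode is a single operation removing one zero), giving totals $2N$ positive, $2N+2$ zero. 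I would make this bookkeeping rigorous by invoking the standard index-counting relation between the Morse index of a constrained critical point and that of the Lagrangian $K$ (e.g. the constrained-Hessian/Lagrange-multiplier lemma, as in Grillakis–Shatah–Strauss type arguments), checking the sign of the relevant scalar $\langle L_-^{-1}(\text{constraint vector}), \text{constraint vector}\rangle$ or its degenerate analogue.

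For the special case $N = 0$: here $\tilde L_+$ is just the scalar $2(N+1) = 2$ and $\tilde L_-$ is the scalar $0$, while the infinite diagonal tail has entries $\{1 - n\}_{n \geq 1} = \{0, -1, -2, \dots\}$. So $L_+$ has one positive eigenvalue (the entry $2$ at $n=0$), one zero ($n=1$) and negative eigenvalues bounded away from zero; $L_-$ has one zero at $n=0$ and one zero at $n=1$ and negatives bounded away. Imposing $Q = \text{const}$ removes the $n=0$ direction from $L_+$ (that is precisely where $Q'(A) \neq 0$ in the real part? — no: $Q'(A) \parallel iA = i\delta_{\cdot 0}$, so the constraint lives in the imaginary part and removes the $n=0$ zero mode of $L_-$, while simultaneously the gauge symmetry quotient removes — wait, they coincide again). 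After imposing the constraint on the $L_-$ side, the $n=0$ zero of $L_-$ and the gauge direction are the same mode, so they are removed together; the surviving zero is the $n=1$ mode of $L_-$ together with the $n=1$ zero of $L_+$ — hence a double zero eigenvalue, no positive eigenvalues, and negatives bounded away from zero, i.e. a degenerate maximizer as claimed. I expect the \textbf{main obstacle} to be the careful, unglamorous bookkeeping of how the single linear constraint $Q = \text{const}$ interacts with the gauge kernel direction (they happen to be parallel, which is what makes the counts work out cleanly), and verifying rigorously — via the appropriate Lagrange-multiplier index lemma — that no positive eigenvalue migrates to zero or negative when restricting to the constraint surface; everything else is direct reading of the diagonal/block structure in \eqref{Hessian-one-mode}.
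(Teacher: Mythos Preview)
Your proposal contains a genuine error in identifying where the constraint acts. You claim that ``$Q'(A)\parallel iA$, a purely imaginary direction'' and hence that the constraint lives on the $L_-$ side; this is not correct. Writing $\alpha=A+a+ib$ with $A$ real, one has $Q(\alpha)=Q(A)+2\langle MA,a\rangle+O(\|a\|^2+\|b\|^2)$, so the linearized constraint is $\langle MA,a\rangle=0$, i.e.\ $a_N=0$. The ordinary gradient of $Q$ at $A$ is real and proportional to $MA$; it is the \emph{symplectic} gradient (the Hamiltonian vector field generated by $Q$) that equals $iA$. You have conflated the two. As a result your claim that ``the constraint direction and the gauge direction coincide'' is false: the constraint direction lives in the $a$-subspace and the gauge direction in the $b$-subspace, and they are handled by $L_+$ and $L_-$ respectively. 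Your internal arithmetic already signals trouble: with $L_+$ ``unaffected'' you would get $(N{+}1)+N=2N{+}1$ positive eigenvalues, not $2N$.

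The paper's argument is exactly this correction. The constraint $a_N=0$ deletes the middle row and column of $\tilde L_+$, which carried the single positive entry $2(N{+}1)$; this removes one positive eigenvalue from $L_+$. Independently, quotienting by the gauge symmetry \eqref{symmshift1} imposes $b_N=0$, which deletes the middle row and column of $\tilde L_-$, carrying a zero entry; this removes one zero eigenvalue from $L_-$. Starting from the count in Lemma~\ref{lemma1} ($2N{+}1$ positive, $2N{+}3$ zero) one obtains $2N$ positive and $2N{+}2$ zero, with the negative part unchanged. For $N=0$ the same two deletions leave no positive eigenvalues and a double zero (the two tail zeros at $n=1$), giving the degenerate maximizer. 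No Grillakis--Shatah--Strauss index machinery is needed, since both constraints simply excise a single coordinate on which the block structure is already diagonal.
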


\begin{proof}
The count of eigenvalues in Lemma \ref{lemma1} is modified by two constraints as follows.
The positive eigenvalue corresponds to the central entry $2(N+1)$ in $\tilde{L}_+$.
This positive eigenvalue is removed by the constraint of fixed $Q(\alpha)$ in the variational
formulation (\ref{K}). Indeed, if $\alpha = A + a + i b$ and $Q(\alpha)$ is fixed, we
impose the linear constraint
on the real perturbation $a$ in the form
\begin{equation}
\label{constraint-one-mode-plus}
\langle MA, a \rangle = \sum\limits_{j=0}^\infty (j+1) A_j a_j = 0,
\end{equation}
which yields the constraint $a_N = 0$ for the single-mode state (\ref{1mode}). This constraint
 removes the corresponding positive entry of $\tilde{L}_+$.

Similarly, the zero eigenvalue from the central zero entry in $\tilde{L}_-$ corresponds to
the gauge symmetry (\ref{symmshift1}). In order to define uniquely
the parameter $\theta$ due to the gauge symmetry (\ref{symmshift1}), we impose
the linear constraint on the perturbation $b$ in the form
\begin{equation}
\label{constraint-one-mode-minus}
\langle A, b \rangle = \sum\limits_{j=0}^\infty A_j b_j = 0,
\end{equation}
which yields the constraint $b_N = 0$ for the single-mode state (\ref{1mode}).
This constraint removes the corresponding zero entry of $\tilde{L}_-$.
\end{proof}

\begin{lemma}
\label{lemma3}
All single-mode states  (\ref{1mode}) are spectrally stable.
\end{lemma}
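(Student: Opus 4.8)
The plan is to read spectral stability directly off the block structure of $L_\pm$ already exhibited in the proof of Lemma~\ref{lemma1}. Since $M=\mathrm{diag}(1,2,\dots)$ is boundedly invertible, the spectral problem \eqref{spectrum} is equivalent, for $\Lambda\neq0$, to the reduced eigenvalue problem
\begin{equation*}
M^{-1}L_+M^{-1}L_-\,\texttt{a}=-\Lambda^2\,\texttt{a},\qquad \texttt{b}=\Lambda^{-1}M^{-1}L_-\,\texttt{a},
\end{equation*}
while $\Lambda=0$ lies on the imaginary axis trivially; so it suffices to prove that every eigenvalue of $M^{-1}L_+M^{-1}L_-$ is real and nonnegative. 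From the explicit formula \eqref{Hessian-one-mode} the operators $L_\pm$ couple only the index $n$ to $2N-n$, so $M^{-1}L_+$, $M^{-1}L_-$, and hence their product, leave invariant the two-dimensional subspaces $V_n:=\mathrm{span}\{e_n,e_{2N-n}\}$ for $0\le n\le N-1$, the one-dimensional subspace $\mathrm{span}\{e_N\}$, and each $\mathrm{span}\{e_n\}$ with $n\ge 2N+1$. It is therefore enough to analyse the restriction to each of these pieces.

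On $\mathrm{span}\{e_n\}$ with $n\ge 2N+1$ both $L_+$ and $L_-$ act by multiplication by $2N+1-n\le0$, so \eqref{spectrum} reduces to the harmonic oscillator $(n+1)\dot{\texttt{a}}=(2N+1-n)\texttt{b}$, $(n+1)\dot{\texttt{b}}=-(2N+1-n)\texttt{a}$, giving $-\Lambda^2=(2N+1-n)^2/(n+1)^2\ge0$. On $\mathrm{span}\{e_N\}$ one has $L_-|_{e_N}=0$ and $L_+|_{e_N}=2(N+1)$, hence $-\Lambda^2=0$. On each $V_n$, the computation in the proof of Lemma~\ref{lemma1} shows that the restrictions of $L_\pm$ are the positive-semidefinite rank-one matrices $(n+1)\bigl(\begin{smallmatrix}1&\pm1\\\pm1&1\end{smallmatrix}\bigr)$; evaluating the $2\times2$ matrix $M^{-1}L_+M^{-1}L_-$ on $V_n$ (with $M|_{V_n}=\mathrm{diag}(n+1,2N-n+1)$) yields determinant $0$ and trace $\bigl(\tfrac{2(N-n)}{2N-n+1}\bigr)^2\ge0$, so its eigenvalues are $0$ and $\bigl(\tfrac{2(N-n)}{2N-n+1}\bigr)^2$, both nonnegative. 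Equivalently, since $L_-|_{V_n}\ge0$, the nonzero eigenvalues of $M^{-1}L_+M^{-1}L_-$ coincide with those of the self-adjoint nonnegative matrix $L_-^{1/2}M^{-1}L_+M^{-1}L_-^{1/2}$. In all three cases $-\Lambda^2\ge0$.

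Assembling the blocks, every eigenvalue $\Lambda$ of \eqref{spectrum} satisfies $\Lambda^2\le0$, i.e.\ $\Lambda\in i\mathbb{R}$, which is exactly spectral stability in the sense of Section~\ref{section-stability}; this covers both $N=0$ and $N\in\mathbb{N}_+$, and it even identifies all nonzero eigenvalues as $\pm i\frac{n-2N-1}{n+1}$ for $n>2N+1$ and $\pm i\frac{2(N-n)}{2N-n+1}$ for $0\le n\le N-1$. I do not anticipate a genuine obstacle: the decoupling of $n$ from $2N-n$, the equality $L_+=L_-$ on the negative diagonal tail, and the positive semidefiniteness of the restrictions of $L_\pm$ to each $V_n$ are all already contained in Lemma~\ref{lemma1}, so the argument is essentially bookkeeping over the invariant blocks. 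The one point worth a remark is that $\Lambda=0$ occurs with a nontrivial Jordan block --- generated on $\mathrm{span}\{e_N\}$ by the scaling symmetry \eqref{symmscale}, which produces the linearly growing solution $b(t)\propto t\,e_N$ of the linearized flow --- so the linearized dynamics has polynomially growing modes; this is consistent with spectral stability as defined here and is precisely the degeneracy that the nonlinear stability analysis of Section~\ref{section-nonlinear} must absorb.
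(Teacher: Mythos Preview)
Your proof is correct and follows essentially the same approach as the paper: both exploit the block-diagonalization of $L_\pm$ over the invariant subspaces $\mathrm{span}\{e_n,e_{2N-n}\}$, $\mathrm{span}\{e_N\}$, and the diagonal tail $n\ge 2N+1$, and compute the eigenvalues explicitly, arriving at the same values $\Omega_n=\tfrac{n-2N-1}{n+1}$ and $\Omega_n=\tfrac{2(N-n)}{2N-n+1}$. The only cosmetic difference is that you first pass to the reduced problem for $M^{-1}L_+M^{-1}L_-$ and note the positive-semidefinite similarity via $L_-^{1/2}M^{-1}L_+M^{-1}L_-^{1/2}$, whereas the paper solves \eqref{spectrum} directly on each block; the underlying computation is identical.
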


\begin{proof}
Due to the block-diagonalization of $L_{\pm}$, one can solve the spectral
stability problem (\ref{spectrum}) explicitly. Associated
to the diagonal blocks of $L_{\pm}$, we obtain an infinite sequence
of eigenvalues $\Lambda_n=\pm i \Omega_n$, where
\begin{equation}
\label{eig-negative}
\Omega_n = \frac{n-2N-1}{n+1}, \quad n \geq 2N+1.
\end{equation}
Note that $\Omega_{2N+1} = 0$ corresponds to the zero eigenvalue of $L_{\pm}$ associated with this entry,
whereas $\Omega_n > 0$ for $n \geq 2N+2$ correspond to the negative eigenvalues of $L_{\pm}$ in the corresponding
entries.

For the $(2N+1)$-$(2N+1)$ block of $L_{\pm}$ denoted by $\tilde{L}_{\pm}$, we
obtain a sequence of $N$ eigenvalues $\Lambda_n =\pm i \Omega_n$, where
\begin{equation}
\label{eig-positive}
\Omega_n = \frac{2(N-n)}{2N+1-n}, \quad 0 \leq n \leq N-1.
\end{equation}
These eigenvalues correspond to the positive eigenvalues of $L_{\pm}$. In addition,
we count the zero eigenvalue $\Omega = 0$ of geometric multiplicity $2N+1$ and algebraic multiplicity $2N+2$
in the spectral problem (\ref{spectrum}) associated with the same block $\tilde{L}_{\pm}$.
No other eigenvalues exist, hence for every $N \in \mathbb{N}$ the single-mode state (\ref{1mode})
  is spectrally stable.
\end{proof}

\begin{rem}
All eigenvalues of the spectral problem (\ref{spectrum})  for the single-mode state (\ref{1mode})
with any $N \in \mathbb{N}$ are semi-simple, except for the double
zero eigenvalue related to the gauge symmetry (\ref{symmshift1}).
\end{rem}

\section{Spectral stability of the ground state}
\label{section-geometric-series}

In the case of the ground state \eqref{ground-state}, the self-adjoint operators $L_{\pm} : D(L_{\pm}) \to \ell^2$
defined by (\ref{Hessian}) take the following  form
\begin{eqnarray}
\label{Hessian-r2}
[L_{\pm}(p) a]_n = \sum_{j=0}^{\infty} [B_{\pm}(p)]_{nj} a_j - (n+1)a_n,
\end{eqnarray}
where
\begin{eqnarray}
\nonumber
[B_{\pm}(p)]_{nj} & = & (1-p^2)^2 \left[ 2\sum_{k = \max(0,j-n)}^\infty S_{njk,n+k-j} p^{n+2k-j} \pm
\sum_{k=0}^{n+j} S_{njk,n+j-k} p^{n+j} \right] \\
\label{Hessian-explicit}
& = & 2 p^{|n-j|}- 2 p^{2+n+j} \pm (1-p^2)^2 (j+1)(n+1) p^{n+j}.
\end{eqnarray}
To derive this expression we have used relations~\eqref{appendix1}-\eqref{appendix3} from the appendix.
Note that the first term in $B_{\pm}(p)$ is given by the Toeplitz operator,
while the second and third terms in $B_{\pm}(p)$ are given by outer products.

First, we establish commutativity of the linear operators given by (\ref{Hessian-r2})--(\ref{Hessian-explicit}).

\begin{lemma}
\label{lemma-commutativity}
For every $p \in [0,1)$, we have
\begin{equation}
\label{commutation}
[L_+(p), L_-(p)] = 0
\end{equation}
and
\begin{equation}
\label{commutation-tilde}
[M^{-1} L_+(p), M^{-1} L_-(p)] = 0.
\end{equation}
\end{lemma}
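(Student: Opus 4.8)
The plan is to verify the two commutation relations directly from the explicit formula \eqref{Hessian-explicit}, exploiting the structure of $B_\pm(p)$ as a sum of a Toeplitz part and two rank-one (outer product) parts. Write $L_\pm(p) = B_\pm(p) - M$, where $M = \mathrm{diag}(1,2,\dots)$, so that $[L_+,L_-] = [B_+,B_-] - [B_+,M] - [M,B_-] = [B_+,B_-] + [M,B_+-B_-]$. Now decompose $B_\pm(p) = T + R \pm P$, where $T$ is the Toeplitz operator with entries $2p^{|n-j|}$, $R$ is the outer product with entries $-2p^{2+n+j} = -2 p^2 (p^n)(p^j)$, i.e.\ $R = -2p^2\, v v^T$ with $v = (p^n)_{n\in\mathbb N}$, and $P$ is the outer product with entries $(1-p^2)^2 (n+1)(j+1) p^{n+j}$, i.e.\ $P = (1-p^2)^2\, w w^T$ with $w = ((n+1)p^n)_{n\in\mathbb N} = Mv$. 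Note $w = Mv$ is exactly the relation that will make things work. Then $[B_+,B_-] = [T+R+P,\,T+R-P] = -2[T+R,\,P] = -2[T,P] - 2[R,P]$, and since $R$ and $P$ are both outer products built from $v$ and $w=Mv$, one computes $[R,P] = -2p^2(1-p^2)^2[vv^T, ww^T] = -2p^2(1-p^2)^2\big((v^Tw)(vw^T) - (w^Tv)(wv^T)\big)$, which vanishes because $v^Tw = w^Tv$ is a scalar and $vw^T$, $wv^T$ are transposes; more carefully $[vv^T,ww^T] = (v^Tw)vw^T - (v^Tw)wv^T$ need not vanish, so the cancellation must instead come from combining $[T,P]$ with $[R,P]$.

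The cleaner route, which I would actually follow, is to observe that $T v = \frac{1+p^2}{1-p^2} v$ is immediate from summing the geometric series, and more importantly to compute $Tw$ and $Tv$ explicitly and check that the range of each rank-one piece is an eigenvector-type direction for the relevant combinations. Concretely, since $P a = (1-p^2)^2 (w^Ta)\, w$ and $R a = -2p^2 (v^Ta)\, v$, acting with $B_\pm$ on such outputs reduces to knowing $Tw$, $Tv$, $w^Tw$, $w^Tv$, $v^Tv$ — all finite geometric-type sums, computable via the appendix identities \eqref{appendix1}–\eqref{appendix3}. I expect to find that $T$ maps $\mathrm{span}\{v,w\}$ into itself (indeed $Tv \propto v$ and $Tw$ is a combination of $v$ and $w$), so that the finite-rank perturbations $R$ and $P$ together with $T$ all preserve the two-dimensional space $V = \mathrm{span}\{v,w\}$ and act as scalars on its complement up to the Toeplitz part. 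Since $M$ also preserves $V$ (as $Mv = w$ and $Mw = \mathrm{diag}((n+1)^2)v$-type vector — actually $Mw = ((n+1)^2 p^n)$, which is a new direction, so $M$ does \emph{not} preserve $V$). This shows the naive "reduce to a $2\times2$ block" argument fails for the second relation \eqref{commutation-tilde}, and $M$ genuinely enters.

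For \eqref{commutation} I would therefore just push through the algebra: expand $[L_+,L_-]a = [B_+,B_-]a + [M, B_+ - B_-]a = -2[T+R,P]a + [M,2P]a = -2[T,P]a - 2[R,P]a + 2[M,P]a$, then use $Pa = (1-p^2)^2(w^Ta)w$, $MPa = (1-p^2)^2(w^Ta)Mw$, $PMa = (1-p^2)^2(w^TMa)w = (1-p^2)^2(\tilde w^Ta)w$ with $\tilde w = Mw$, $TPa = (1-p^2)^2(w^Ta)Tw$, $PTa = (1-p^2)^2(w^TTa)w = (1-p^2)^2(Tw)^Ta\, w$ (using $T=T^T$), $RPa = -2p^2(1-p^2)^2(w^Ta)(v^Tw)v$, $PRa = -2p^2(1-p^2)^2(v^Ta)(w^Tv)w$. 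Collecting, $[L_+,L_-]a$ becomes a combination of $v$, $w$, $Mw$, $Tw$ with coefficients linear in $a$ through the functionals $v^Ta$, $w^Ta$, $(Mw)^Ta$, $(Tw)^Ta$; substituting the explicit values of $Tw$ (a linear combination of $v$ and $w$), $v^Tv$, $v^Tw$, $w^Tw$ from the appendix, everything must cancel. For \eqref{commutation-tilde}, write $M^{-1}L_\pm = M^{-1}B_\pm - I$, so $[M^{-1}L_+, M^{-1}L_-] = [M^{-1}B_+, M^{-1}B_-]$; expanding $M^{-1}B_\pm = M^{-1}T + M^{-1}R \pm M^{-1}P$ and using that $M^{-1}P a = (1-p^2)^2(w^Ta)M^{-1}w = (1-p^2)^2(w^Ta)v$ (since $M^{-1}w = v$!) and similarly $M^{-1}R a = -2p^2(v^Ta)M^{-1}v$ with $M^{-1}v = ((n+1)^{-1}p^n)$, the outer-product pieces simplify and one again reduces to a finite computation with geometric sums.

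The main obstacle will be bookkeeping: there is no slick structural shortcut because $M$ does not preserve the natural two-dimensional invariant subspace, so one must carefully expand all the commutators into the basis $\{v, w, Mw, Tw, M^{-1}v,\dots\}$, plug in the closed-form geometric sums from Appendix A, and watch a fair number of terms cancel. I would organize the calculation by grouping terms according to which rank-one functional ($v^T\cdot$, $w^T\cdot$, etc.) they carry, verify cancellation functional-by-functional, and treat \eqref{commutation} and \eqref{commutation-tilde} in parallel since the only difference is replacing $T,R,P$ by $M^{-1}T, M^{-1}R, M^{-1}P$ and using $M^{-1}w=v$, $M^{-1}v = (p^n/(n+1))_n$. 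No genuine conceptual difficulty is expected — the relations hold because $T$, $R$, $P$, $M$ all interact through the single geometric vector $v$ and its image $w=Mv$ — but the verification is computation-heavy.
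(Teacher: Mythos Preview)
Your approach is correct and genuinely different from the paper's. The paper verifies both relations by direct entry-by-entry computation: it writes out $(L_+L_- - L_-L_+)_{nj}$ and $(L_+M^{-1}L_- - L_-M^{-1}L_+)_{nj}$ explicitly as sums and shows each vanishes using the geometric-series identities in the appendix. Your route via the rank-one structure of $P=(1-p^2)^2\,ww^T$ (with $w=Mv$, $v=(p^n)_n$) is more structural and in fact leads to a much shorter argument than you anticipate --- no heavy bookkeeping is needed. Writing $B_\pm=(T+R)\pm P$ and $L_\pm=B_\pm-M$, your own expansion gives $[L_+,L_-]=-2[T+R-M,\,P]$; since $P=c\,ww^T$ is rank one and $T+R-M$ is symmetric, this commutator equals $-2c\bigl((T{+}R{-}M)w\cdot w^T - w\cdot((T{+}R{-}M)w)^T\bigr)$ and hence vanishes precisely when $(T+R-M)w$ is a scalar multiple of $w$. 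But $L_-(p)MA(p)=0$ from \eqref{zeroLm} says exactly $(T+R-P-M)w=0$, so $(T+R-M)w=Pw=c\|w\|^2\,w$, and \eqref{commutation} follows in one line. For \eqref{commutation-tilde} one has $[M^{-1}L_+,M^{-1}L_-]=-2[M^{-1}(T+R),\,M^{-1}P]$ with $M^{-1}P=c\,vw^T$; this vanishes iff $(T+R)v$ is a scalar multiple of $w$, which follows in the same way from $L_-(p)A(p)=0$. Thus both commutation relations are immediate corollaries of the kernel relations \eqref{zeroLm}, a fact the paper's computational proof obscures.

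Two of your exploratory remarks are inaccurate but harmless: $Tv$ is not proportional to $v$ (one finds $Tv=2w+\tfrac{2p^2}{1-p^2}\,v$), and $Tw\notin\mathrm{span}\{v,w\}$ because of an $Mw$ contribution. You already discarded the ``reduce to a $2\times2$ block'' idea for this reason, so these do not affect your final brute-force plan, which would also succeed --- it just fails to exploit that only the combinations $(T+R-M)w$ and $(T+R)v$ matter, and these are determined by \eqref{zeroLm} without any further geometric sums.
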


\begin{proof}
In order to verify the commutation relation \eqref{commutation}, we use (\ref{Hessian-r2})--(\ref{Hessian-explicit}) and write
\begin{eqnarray}
\label{appendix5}
\left[ L_+(p) L_-(p) - L_-(p) L_+(p) \right]_{nj} = 2 (1-p^2)^2 C_{nj},
\end{eqnarray}
where
\begin{eqnarray*}
C_{nj} & = &  \sum_{k=0}^{\infty} (1+k)(1+n) p^{n+k}
\left[ 2 p^{|k-j|} - 2 p^{k+j+2} - (k+1) \delta_{kj} \right] \\
& \phantom{t} & \quad \quad \quad - (1+k)(1+j) p^{k+j}
\left[ 2 p^{|k-n|} - 2 p^{k+n+2} - (k+1) \delta_{kn} \right].
\end{eqnarray*}
To show that $C_{nj} = 0$, we proceed for $j \geq n$ (the proof for $j \leq n$ is analogous):
\begin{eqnarray*}
C_{nj} & = & 2 \sum_{k=0}^{\infty} (1+k)(1+n)p^{n+k+|k-j|} - 2 \sum_{k=0}^{\infty} (1+k)(1+j) p^{j+k+|k-n|} \\
& \phantom{t} & + 2 \sum_{k=0}^{\infty} (1+k) (j-n) p^{n+j+2k+2} + (1+n)(1+j)(n-j) p^{n+j}
\end{eqnarray*}
such that
\begin{eqnarray*}
C_{nj} & = & 2 \sum_{k=0}^{j} (1+k)(1+n)p^{n+j} + 2 \sum_{k=j+1}^{\infty} (1+k)(1+n) p^{n+2k-j} \\
& \phantom{t} & - 2 \sum_{k=0}^{\infty} (1+k)(1+j) p^{n+j} - 2 \sum_{k=n+1}^{\infty} (1+k)(1+j) p^{j+2k-n} \\
& \phantom{t} & + 2 \sum_{k=0}^{\infty} (1+k) (j-n) p^{n+j+2k+2} + (1+n)(1+j)(n-j) p^{n+j}.
\end{eqnarray*}
This yields $C_{nj} = 0$ in view of identities (\ref{appendix7})--(\ref{appendix8}) from the appendix.

In order to verify the commutation relation \eqref{commutation-tilde}, we use (\ref{Hessian-r2})--(\ref{Hessian-explicit}) and write
\begin{eqnarray}
\label{appendix11}
\left[ L_+(p) M^{-1} L_-(p) - L_-(p) M^{-1} L_+(p) \right]_{nj} = 2 (1-p^2)^2 D_{nj},
\end{eqnarray}
where
\begin{eqnarray*}
D_{nj} & = &  \sum_{k=0}^{\infty} (1+n) p^{n+k}
\left[ 2 p^{|k-j|} - 2 p^{k+j+2} - (k+1) \delta_{kj} \right] \\
& \phantom{t} & \quad \quad \quad - (1+j) p^{k+j}
\left[ 2 p^{|k-n|} - 2 p^{k+n+2} - (k+1) \delta_{kn} \right].
\end{eqnarray*}
As previously, we proceed for $j \geq n$ (the proof for $j \leq n$ is analogous):
\begin{eqnarray*}
D_{nj} & = & 2 \sum_{k=0}^{\infty} (1+n)p^{n+k+|k-j|} - 2 \sum_{k=0}^{\infty} (1+j) p^{j+k+|k-n|} + 2 \sum_{k=0}^{\infty} (j-n) p^{n+j+2k+2} \\
& = & 2 \sum_{k=j+1}^{\infty} (1+n) p^{n+2k-j} - 2 \sum_{k=n+1}^{\infty} (1+j) p^{j+2k-n} + 2 \sum_{k=0}^{\infty} (j-n) p^{n+j+2k+2}.
\end{eqnarray*}
This yields $D_{nj} = 0$ in view of the identity (\ref{appendix7}) from the appendix.
\end{proof}

Because of the commutation relation (\ref{commutation}), the operators $L_{\pm}(p)$ have a common basis of
eigenvectors. The following lemma fully characterizes the spectra of $L_{\pm}(p)$ in $\ell^2$.

\begin{lemma}
\label{lemma-L-2}
For every $p \in [0,1)$,  the spectra of the operators $L_{\pm}(p) : h^1 \subset \ell^2 \to \ell^2$
given by (\ref{Hessian-r2}) consist of the following isolated eigenvalues:
\begin{equation}
\label{spectrum-L-minus}
\hspace{-0.9cm} \sigma(L_-) = \{\dots,-3,-2,-1, 0, 0\},
\end{equation}
and
\begin{equation}
\label{spectrum-L-plus}
\sigma(L_+) = \{\dots,-3,-2,-1,0, \lambda_*(p)\}\,,
\end{equation}
where $\lambda_*(p) = 2(1+p^2)/(1-p^2) > 0$.
\end{lemma}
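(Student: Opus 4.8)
The plan is to reduce both spectral problems to a single one, fix the ``top'' of each spectrum by soft arguments, and resolve the remainder by an explicit diagonalization; I expect the last of these to be the real difficulty.

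\textbf{Reduction.} From \eqref{Hessian-explicit} the difference $L_+(p)-L_-(p)=2(1-p^2)^2\,\langle w,\cdot\rangle\,w$ is rank one, with $w_n:=(n+1)p^n$; set also $v_n:=p^n$, so $w=Mv$ and $A(p)=(1-p^2)v$. The gauge symmetries \eqref{symmshift1}--\eqref{symmshift2} (equivalently, a direct computation using the identities of Appendix~A) give $L_-(p)v=0$ and $L_-(p)w=0$, hence $\ker L_-(p)\supseteq\mathrm{span}\{v,w\}$. I would then record three consequences. Since $L_-(p)w=0$, the rank-one term gives $L_+(p)w=2(1-p^2)^2\|w\|^2\,w=\lambda_*(p)\,w$, because $\|w\|^2=(1+p^2)/(1-p^2)^3$. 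The rank-one term annihilates $w^\perp$, so $L_+(p)=L_-(p)$ on $w^\perp$, and $w^\perp$ is invariant under both $L_\pm(p)$ since $w$ is an eigenvector of each. And the vector $u_0:=(1+p^2)v-(1-p^2)w$ lies in $w^\perp\cap\mathrm{span}\{v,w\}\subseteq w^\perp\cap\ker L_-(p)\subseteq\ker L_+(p)$ (note $u_0\propto\partial_pA(p)$). Decomposing $\ell^2=w^\perp\oplus\mathrm{span}(w)$ yields $\sigma(L_-)=\sigma(L_-|_{w^\perp})\cup\{0\}$ and $\sigma(L_+)=\sigma(L_-|_{w^\perp})\cup\{\lambda_*(p)\}$, so \eqref{spectrum-L-minus}--\eqref{spectrum-L-plus} become the single claim
\[
\sigma\!\left(L_-(p)|_{w^\perp}\right)=\{0,-1,-2,-3,\dots\},\qquad\text{every eigenvalue simple.}
\]
At $p=0$ this is the $N=0$ single-mode case of Section~\ref{section-one-mode}, where $L_\pm(0)$ is diagonal, so one may take $p\in(0,1)$.

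\textbf{Soft bounds.} By \eqref{Hessian-r2}--\eqref{Hessian-explicit}, $L_\pm(p)=-M+\mathcal B(p)$ with $\mathcal B(p)$ bounded (the operator with matrix $2p^{|n-j|}$ is bounded, the other two terms are finite rank); since $M^{-1}$ is compact, $\mathcal B(p)(-M+i)^{-1}$ is compact, so $\sigma_{\mathrm{ess}}(L_\pm(p))=\sigma_{\mathrm{ess}}(-M)=\emptyset$: the spectra are purely discrete, real, bounded above, with finite multiplicities and $-\infty$ the only accumulation point. Next, Theorem~\ref{theorem-bound} says precisely that $A(p)$ maximizes $H$ on $\{Q=1\}$. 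At $A(p)$ the tangent space of $\{Q=1\}$ contains all imaginary perturbations (since $Q(A+itb)=Q(A)+O(t^2)$) and, among real perturbations, exactly $w^\perp$; computing the second variation of $H|_{\{Q=1\}}$ there, using that the quadratic part of $\tfrac12[H(A+a+ib)-H(A)]$ is $\langle (L_+(p)+M)a,a\rangle+\langle(L_-(p)+M)b,b\rangle$, one gets $L_+(p)\le 0$ on $w^\perp$ and $L_-(p)\le 0$ on all of $\ell^2$. Hence $L_+(p)$ has at most one positive eigenvalue, so exactly one — the simple $\lambda_*(p)=2(1+p^2)/(1-p^2)>0$ found above — and $\sigma(L_-(p)|_{w^\perp})\subseteq(-\infty,0]$ with $0$ attained at $u_0$. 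Only the negative part of $\sigma(L_-(p)|_{w^\perp})$ is left to identify.

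\textbf{The crux.} This is the real work, and I expect it to be the main obstacle: non-positivity controls only the top of the spectrum, and real-analyticity of $p\mapsto L_-(p)$ alone does not prevent the negative eigenvalues from leaving the integers or colliding. I would solve the eigenvalue equation explicitly. The Toeplitz part factors as $2\,(p^{|n-j|})=2(1-p^2)\,J^{-1}$, where $J$ is the Jacobi matrix with $J_{00}=1$, $J_{nn}=1+p^2$ for $n\ge 1$ and $J_{n,n\pm1}=-p$; substituting this into $L_-(p)\psi=-m\psi$ converts it into a three-term recurrence for the coefficients of $\psi$, which one solves to produce, for each $m\ge 1$, an eigenvector $\psi^{(m)}(p)\in\ell^2$ with $L_-(p)\psi^{(m)}=-m\,\psi^{(m)}$ (reducing to the basis vector $e_{m+1}$ at $p=0$, where $L_-(0)=\mathrm{diag}(0,0,-1,-2,\dots)$). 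One then verifies that $\{w,u_0\}\cup\{\psi^{(m)}\}_{m\ge 1}$ is complete in $\ell^2$ — directly, or by continuity from $p=0$ together with analytic perturbation theory — which forces $\sigma(L_-(p)|_{w^\perp})$ to be exactly $\{0,-1,-2,\dots\}$ with each eigenvalue simple, completing the proof. A cleaner, more structural route, if it can be exhibited, would be a unitary $U(p)$ on $\ell^2$ arising from the M\"obius action of the conformal symmetry group of the model, under which the ground-state family \eqref{r1} is a single orbit and which is normalized by $U(p)e_0=\sqrt{1-p^2}\,v$; such a $U(p)$ would conjugate $L_\pm(p)$ to the constant-coefficient operators $L_\pm(0)$, giving \eqref{spectrum-L-minus}--\eqref{spectrum-L-plus} at once. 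In either case the appendix summation identities and the commutativity relations of Lemma~\ref{lemma-commutativity} are what is needed to close the computation; the reduction and the soft bounds above are essentially formal.
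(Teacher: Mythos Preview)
Your reduction is correct and parallels the paper closely: the paper isolates the same subspace $X_0(p)=\mathrm{span}\{A(p),MA(p)\}=\mathrm{span}\{v,w\}$, records the same identities $L_-(p)v=L_-(p)w=0$, $L_+(p)w=\lambda_*(p)w$, $L_+(p)A'(p)=0$, and observes that on the orthogonal complement both operators coincide with $2T(p)-M$, where $T(p)_{nj}=p^{|n-j|}-p^{n+j+2}$. Your soft bound from Theorem~\ref{theorem-bound} (constrained maximizer $\Rightarrow$ $L_+\le 0$ on $w^\perp$ and $L_-\le 0$ everywhere) is a valid extra the paper does not invoke --- it pins down the non-negative eigenvalues directly from the symmetries --- but once the negative spectrum is computed exactly it is redundant.

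The gap is your crux. Neither route is executed, and the recurrence approach is more delicate than you indicate: the factorization $2(p^{|n-j|})=2(1-p^2)J^{-1}$ handles only the pure Toeplitz piece, while the residual rank-one correction $-2p^{2}\langle v,\cdot\rangle v$ in $2T(p)$ does not vanish on $w^\perp$ alone --- you must first pass to $v^\perp\cap w^\perp=[X_0(p)]^\perp$ (where the negative eigenvectors live anyway by self-adjointness) before a clean three-term recurrence emerges. The unitary-conjugation route is attractive but speculative; the paper does not produce such a $U(p)$.

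What the paper actually does is a ladder-operator argument --- precisely the structural shortcut you were hoping for, though not via a unitary. On $[X_0(p)]^\perp$ the right-shift $S:(a_0,a_1,\dots)\mapsto(0,a_0,a_1,\dots)$ and its adjoint $S^*$ satisfy
\[
[2T(p)-M,\,S]\,a=-Sa,\qquad [2T(p)-M,\,S^*]\,a=S^*a,
\]
both identities reducing, after expanding $T(p)$, to the single constraint $\langle v,a\rangle=0$. Thus $S$ lowers and $S^*$ raises the eigenvalue by $1$. Starting from an explicit $v^{(1)}$ with eigenvalue $-1$ and $S^*v^{(1)}\in\ker(2T(p)-M)=\mathrm{span}\{A'(p)\}$, the chain $v^{(m+1)}=Sv^{(m)}$ (note $S$ preserves $[X_0(p)]^\perp$) produces an eigenvector for every $-m$, while the raising relation forces any negative eigenvalue to be a negative integer. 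This replaces your recurrence computation entirely.
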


\begin{proof}
For every $p \in [0,1)$ we have $A(p) \in h^m$ for every $m \in \mathbb{N}$.
Hence $B_{\pm}(p)$ are bounded operators from $\ell^2$ to $\ell^2$,
whereas the diagonal parts of $L_{\pm}(p)$ are unbounded operators from $\ell^2$
to $\ell^2$ with the domain $h^1$. Since
the diagonal parts of $L_{\pm}(p)$ (of the Hilbert--Schmidt type) have compact resolvent
and the $B_{\pm}(p)$ operators are bounded perturbations to the diagonal parts,
the operators $L_{\pm}(p)$ have compact resolvent. Hence, the spectra of $L_{\pm}(p)$
consist of  infinitely many isolated eigenvalues.

By the symmetries (\ref{symmshift1}) and (\ref{symmshift2}), we have
\begin{equation}\label{zeroLm}
\quad L_-(p) A(p) = 0,\quad L_-(p) M A(p) = 0,
\end{equation}
where $M = {\rm diag}(1,2,\dots)$.
On the other hand, by differentiating (\ref{ground-state}) with respect to $p$, we obtain
\begin{equation}\label{zeroLp}
  L_+ A'(p) = 0, \quad \mbox{\rm where} \;\; A'(p) = - \frac{(1+p^2)}{p(1-p^2)} A(p) + \frac{1}{p} M A(p).
\end{equation}
By differentiating the scaling symmetry (\ref{symmscale}) with respect to $c$ at $c = 1$,
we get the explicit solution $a = A(p)$, $b = -2t A(p)$ of the linear equation (\ref{evolution-linear}), hence
\begin{equation}
\label{doublezeroLp}
L_+(p) A(p) = 2 M A(p).
\end{equation}
Combined with (\ref{zeroLp}), this yields
\begin{equation}
\label{pos-eig-Lp}
L_+(p) M A(p) = \frac{1+p^2}{1-p^2} L_+(p) A(p) = \frac{2(1+p^2)}{1-p^2} M A(p),
\end{equation}
which gives the positive eigenvalue $\lambda_*(p)$ in (\ref{spectrum-L-plus}).

It remains to prove that the rest of the spectrum of $L_{\pm}(p)$ coincides
with the negative integers. Since the same two-dimensional subspace $X_0(p) = {\rm span}\{ A(p), MA(p) \}$
is associated with the double zero eigenvalue of operator $L_-(p)$ and with the two simple nonnegative eigenvalues
of operator $L_+(p)$, we introduce the orthogonal complement
\begin{equation}
\label{X-0-constraints}
[X_0(p)]^{\perp} := \left\{ a \in \ell^2 : \quad \langle A(p), a \rangle = \langle M A(p), a \rangle = 0 \right\}.
\end{equation}
Eigenvectors for negative eigenvalues of $L_{\pm}(p)$ belong to $[X_0(p)]^{\perp}$.
Due to the second orthogonality condition in (\ref{X-0-constraints}), we compute
\begin{equation}
\label{notations-T}
[B_{\pm}(p) a]_{n} = \sum_{j\in \mathbb{N}} \left[ 2 p^{|n-j|}- 2 p^{2+n+j} \pm (1-p^2)^2 (j+1)(n+1) p^{n+j} \right] a_j =
2 [T(p) a]_n,
\end{equation}
where $[T(p)]_{nj} := p^{|n-j|} - p^{n+j+2}$. Hence, the negative eigenvalues of $L_{\pm}(p)$ are identical to
the negative eigenvalues of $2T(p) - M$ and hence they are identical to each other.

In order to prove that the negative eigenvalues of $2T(p) - M$ are negative integers,
let us define the shift operator $S : \ell^2 \to \ell^2$ by
$$
S: (a_0,a_1,a_2,\dots) \mapsto (0,a_0,a_1,a_2,\dots),
$$
and its left inverse operator $S^* : \ell^2 \to \ell^2$ by
$$
S^*: (a_0,a_1,a_2,\dots) \mapsto (a_1,a_2,a_3,\dots).
$$
Let us show that for every $a \in [X_0(p)]^{\perp}$,
\begin{equation}\label{LS}
  [2 T(p) - M, S] a = - S a, \qquad [2T(p) - M, S^*] a =  S^* a.
\end{equation}
Indeed, the first identity in (\ref{LS}) is verified if
the following two expressions are equal to each other:
\begin{eqnarray*}
[(2T(p)-M)Sa]_n & = & 2 \sum_{j=1}^{\infty} (p^{|n-j|} - p^{2+n+j}) a_{j-1} - (n+1) a_{n-1} \\
& = & 2 \sum_{k=0}^{\infty} (p^{|n-1-k|} - p^{3+n+k}) a_{k} - (n+1) a_{n-1}
\end{eqnarray*}
and
\begin{eqnarray*}
[S(2T(p)-M -I)a]_n & = & [(2T(p)-M -I)a]_{n-1} \\
& = & 2 \sum_{k=0}^{\infty} (p^{|n-1-k|} - p^{1+n+k}) a_{k} - (n+1) a_{n-1}.
\end{eqnarray*}
Since $\sum_{k=0}^{\infty} p^k a_k = 0$ thanks to the first orthogonality condition in (\ref{X-0-constraints}),
the two expressions are equal to each other so that the first identity in (\ref{LS}) is verified.
Now, the second identity in (\ref{LS}) is verified if
the following two expressions are equal to each other:
\begin{eqnarray*}
[(2T(p)-M)S^* a]_n & = & 2 \sum_{j=0}^{\infty} (p^{|n-j|} - p^{2+n+j}) a_{j+1} - (n+1) a_{n+1} \\
& = & 2 \sum_{k=1}^{\infty} (p^{|n+1-k|} - p^{1+n+k}) a_{k} - (n+1) a_{n+1} \\
& = & 2 \sum_{k=0}^{\infty} (p^{|n+1-k|} - p^{1+n+k}) a_{k} - (n+1) a_{n+1}
\end{eqnarray*}
and
\begin{eqnarray*}
[S^*(2T(p)-M +I)a]_n & = & [(2T(p)-M +I)a]_{n+1} \\
& = & 2 \sum_{k=0}^{\infty} (p^{|n+1-k|} - p^{3+n+k}) a_{k} - (n+1) a_{n+1}.
\end{eqnarray*}
The two expressions are equal to each other again
thanks to the first orthogonality condition in (\ref{X-0-constraints}),
so that the second identity in (\ref{LS}) is verified.

The two identities in (\ref{LS}) imply that
the operators $S$ and $S^*$ play the role of creation and annihilation operators
for elements of $[X_0(p)]^{\perp}$. In particular, they generate
the (same) set of eigenvectors of $L_{\pm}(p)$ for the (same) eigenvalues of $L_{\pm}(p)$.
More precisely, the second equation in (\ref{LS}) shows that
the negative eigenvalues of $2T(p) - M$ are located at
negative integers, $\lambda_m = -m$, $m \in \mathbb{N}_+$, whereas
the corresponding eigenvectors $v^{(m)}$, defined by
$$
L_{\pm}(p) v^{(m)} = (2T(p) - M) v^{(m)} = \lambda_m v^{(m)}, \quad m \in \mathbb{N}_+,
$$
are related by
$$
S^* v^{(m)}(p) = v^{(m-1)}(p), \quad m \geq 2
$$
and
$$
S^* v^{(1)}(p) \in {\rm ker}(2T(p)-M) = {\rm ker}(L_+(p)) = {\rm span}\{A'(p)\}.
$$
The first equation in (\ref{LS}) gives the relation
$$
v^{(m+1)}(p) = S v^{(m)}(p), \quad m \geq 1,
$$
which can be used to generate all eigenvectors from $v^{(1)}(p)$.
Using  summation formulae~\eqref{appendix6}-\eqref{appendix9} from the appendix
we have verified that the first eigenvector is given by
\begin{equation}\label{B1}
[v^{(1)}(p)]_n = \left\{
\begin{array}{ll}
p^2 \quad & \mbox{if} \quad n=0,\\
(1-p^2)[-(1+p^2)+n(1-p^2)]\,p^{n-2} \quad & \mbox{if} \quad n \in \mathbb{N}_+,\\
\end{array} \right.
\end{equation}
so that $S^* v^{(1)}(p) = (1-p^2) A'(p) \in {\rm ker}(L_+(p)) = {\rm ker}(2T(p)-M)$.
\end{proof}

Because of the commutation relation (\ref{commutation-tilde}), the operators $M^{-1} L_{\pm}(p)$ also have a common basis of
eigenvectors, which coincide with eigenvectors of the spectral problem (\ref{spectrum}) for nonzero eigenvalues $\Lambda$.
The following lemma fully characterizes eigenvalues of the spectral problem (\ref{spectrum}) in $\ell^2$.

\begin{lemma}
\label{lemma-L-3}
Eigenvalues of the spectral problem (\ref{spectrum}) are purely imaginary $\Lambda_m=\pm i \Omega_m$, where
\begin{equation}
\label{spectrum-L-plus-minus}
\Omega_0 = \Omega_1 = 0, \quad \Omega_m = \frac{m-1}{m+1}, \quad m \geq 2,
\end{equation}
independently of $p \in [0,1)$.
\end{lemma}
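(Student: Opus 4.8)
The plan is to exploit the two commutation relations from Lemma~\ref{lemma-commutativity} together with the explicit spectral decomposition of $L_\pm(p)$ obtained in Lemma~\ref{lemma-L-2}. Since $[M^{-1}L_+(p), M^{-1}L_-(p)] = 0$, the operators $M^{-1}L_+(p)$ and $M^{-1}L_-(p)$ are simultaneously diagonalizable, and I first argue that their common eigenvectors are precisely the eigenvectors that generate the solutions of the spectral problem \eqref{spectrum} at nonzero $\Lambda$. Indeed, if $(\texttt{a},\texttt{b})$ solves \eqref{spectrum} with $\Lambda \neq 0$, then eliminating $\texttt{b}$ gives $M^{-1}L_+(p) M^{-1} L_-(p) \texttt{a} = -\Lambda^2 \texttt{a}$, so $\texttt{a}$ is an eigenvector of the product $M^{-1}L_+ M^{-1} L_-$ with eigenvalue $-\Lambda^2$; conversely, any joint eigenvector of the two commuting self-adjoint-with-respect-to-$M$ operators yields such a solution with $\Lambda^2$ equal to minus the product of the two eigenvalues.

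Next I would identify the joint eigenvectors and eigenvalues. From the proof of Lemma~\ref{lemma-L-2}, on the invariant subspace $[X_0(p)]^\perp$ both $L_+(p)$ and $L_-(p)$ restrict to $2T(p) - M$, whose eigenvalues are the negative integers $-m$, $m \in \mathbb{N}_+$, with eigenvectors $v^{(m)}(p)$. For such an eigenvector, $M^{-1}L_\pm(p) v^{(m)}$ is not simply a scalar multiple of $v^{(m)}$, so the relevant quantity is the product $M^{-1}L_+ M^{-1} L_-$ acting on the common eigenbasis. I will instead use the sharper structure: on $[X_0(p)]^\perp$ we have $L_+(p) = L_-(p) = 2T(p)-M$, hence $M^{-1}L_+(p) M^{-1}L_-(p) = (M^{-1}(2T(p)-M))^2$ there. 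So the nonzero eigenvalues $\Lambda$ satisfy $\Lambda = \pm i\,\omega$ where $\omega$ runs over the eigenvalues of $M^{-1}(M - 2T(p)) = I - 2M^{-1}T(p)$ restricted to $[X_0(p)]^\perp$; in particular all such $\Lambda$ are purely imaginary. To pin down the values $\Omega_m$, I would use the creation/annihilation relations \eqref{LS}. From $[2T(p)-M, S^*]a = S^* a$ on $[X_0(p)]^\perp$ one deduces how $S^*$ shifts the index $m$, and combined with $M S^* = S^*(M - I)$ one can track how the ``frequency'' $\omega_m = $ (eigenvalue of $M^{-1}(M-2T(p))$ on $v^{(m)}$) transforms. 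Evaluating on the bottom eigenvector $v^{(1)}(p)$ given explicitly in \eqref{B1}, which sits at eigenvalue $-1$ of $2T(p)-M$, together with the fact that $v^{(1)}$ is supported so that its $M$-action is computable, should give $\omega$ on the lowest rung, and the ladder relation then yields $\Omega_m = (m-1)/(m+1)$ for $m \geq 2$. The two zero frequencies $\Omega_0 = \Omega_1 = 0$ come from the kernel directions: $\ker(M^{-1}L_-) \supset \mathrm{span}\{A(p), MA(p)\}$, and on $X_0(p)$ the spectral problem \eqref{spectrum} degenerates (the scaling and gauge symmetries account for the generalized kernel), contributing exactly two zero modes counted without the Jordan block.

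The main obstacle, and the place where genuine computation is unavoidable, is extracting the precise eigenvalue formula $\Omega_m = (m-1)/(m+1)$ rather than merely the qualitative ``purely imaginary'' conclusion. The commutation relation \eqref{commutation-tilde} guarantees simultaneous diagonalizability and hence reality of $\Omega_m^2$, but the ladder operators $S, S^*$ do not commute with $M$ in a way that makes $M^{-1}L_\pm(p)$ act diagonally in a manifestly nice basis — one has $[M, S] = -S$ and $[M, S^*] = S^*$, so conjugating by the ladder introduces shifts that must be combined carefully with \eqref{LS}. Concretely I expect to write $M^{-1}L_+(p) = I - 2M^{-1}T(p)$ on $[X_0(p)]^\perp$, apply both sides to $v^{(m)} = S^{m-1} v^{(1)}$, and use $M^{-1}T(p)$ acting on the explicit vectors from \eqref{B1} together with the summation identities \eqref{appendix6}--\eqref{appendix9} to read off the coefficient. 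Once the bottom rung is computed and the recursion for the shift is established, the closed form $\Omega_m = (m-1)/(m+1)$ follows, and $p$-independence is automatic since the ladder relations \eqref{LS} and the eigenvalue ladder of $2T(p)-M$ are $p$-independent — only the eigenvectors $v^{(m)}(p)$ depend on $p$.
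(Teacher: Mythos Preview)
Your proposal has a genuine gap at the step where you pass from the ordinary eigenvalue problem for $L_\pm(p)$ to the generalized one for $M^{-1}L_\pm(p)$. You correctly observe that on $[X_0(p)]^\perp$ (the $\ell^2$-orthogonal complement of $\{A(p),MA(p)\}$) both $L_\pm(p)$ restrict to $2T(p)-M$, and then assert that ``hence $M^{-1}L_+(p)M^{-1}L_-(p)=(M^{-1}(2T(p)-M))^2$ there.'' That implication requires $[X_0(p)]^\perp$ to be invariant under $M^{-1}L_\pm(p)$, which you have not checked and which in fact does not hold: $M^{-1}L_\pm(p)$ is self-adjoint with respect to the weighted inner product $\langle M\cdot,\cdot\rangle$, not the $\ell^2$ inner product, so the invariant complement of $X_0(p)$ is the \emph{$M$-orthogonal} one, namely $[X_c(p)]^\perp=\{a:\langle MA(p),a\rangle=\langle M^2A(p),a\rangle=0\}$. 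This distinction is exactly why the paper introduces $[X_c(p)]^\perp$ in \eqref{Y-0-constraints}. Relatedly, the vectors $v^{(m)}$ from \eqref{B1} are eigenvectors of $2T(p)-M$, not of $M^{-1}(2T(p)-M)$, and the shift relations \eqref{LS} ladder the spectrum of the former, not the latter; since $S$ and $M^{-1}$ do not commute nicely, there is no evident way to push the ladder through to yield the numbers $(m-1)/(m+1)$.

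The paper's argument proceeds quite differently once it has reduced (on the correct subspace $[X_c(p)]^\perp$) to the generalized problem $T(p)v=\mu Mv$ with $\lambda=2\mu-1$. The key computation is \eqref{powers-cancel}--\eqref{decomposition-M-m}: applying $T(p)$ to $M^mA(p)$ produces a linear combination of $M^jA(p)$ for $1\le j\le m+1$ whose top coefficient is $\beta_{m+1}^{(m)}=\frac{1}{m+1}$, coming from $\sum_{k=1}^{n+1}k^m=\frac{1}{m+1}(n+1)^{m+1}+\cdots$. Thus $T(p)$ acts upper-triangularly on the flag $\{A(p),MA(p),M^2A(p),\dots\}$ relative to $M$, and the eigenvalues $\mu_m=\frac{1}{m+1}$ drop out immediately; the eigenvectors \eqref{eigenvectors-mu} are then obtained by back-substitution. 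This gives $\lambda_m=-(m-1)/(m+1)$ and hence $\Omega_m=(m-1)/(m+1)$, with the $p$-independence manifest because only the leading Faulhaber coefficient matters. If you want to salvage your ladder idea, you would need new creation/annihilation relations adapted to the $M$-weighted problem; the paper's triangular-action argument bypasses this entirely.
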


\begin{proof}
The spectral problem (\ref{spectrum}) can be written in the matrix form
\begin{equation}
\label{spectrum-matrix}
\mathcal{L}(p) \vec{ \texttt{a}} = \Lambda \mathcal{M} \vec{ \texttt{a}},
\end{equation}
where
$$
\mathcal{L}(p) = \left[ \begin{array}{cc} 0 & L_-(p) \\ -L_+(p) & 0 \end{array} \right], \quad
\mathcal{M} = \left[ \begin{array}{cc} M & 0 \\ 0 & M \end{array} \right], \quad
\vec{ \texttt{a}} = \left[ \begin{array}{c}  \texttt{a} \\  \texttt{b} \end{array} \right].
$$
The geometric kernel of $\mathcal{L}(p)$ is three-dimensional and spanned by the three
linearly independent eigenvectors
\begin{equation}
\label{eigenvectors-Zero}
\left[ \begin{array}{c}  0 \\  A(p) \end{array} \right], \quad
\left[ \begin{array}{c}  0 \\  M A(p) \end{array} \right], \quad
\left[ \begin{array}{c}  A'(p) \\ 0 \end{array} \right],
\end{equation}
according to (\ref{zeroLm}) and (\ref{zeroLp}). The generalized kernel of $\mathcal{M}^{-1} \mathcal{L}(p)$ is obtained
from solutions of the inhomogeneous equation $\mathcal{L}(p) \vec{\texttt{a}}_1 = \mathcal{M} \vec{\texttt{a}}_0$,
where $\vec{\texttt{a}}_0 \in {\rm ker}(\mathcal{L}(p))$. We have
\begin{equation}
\label{comp-2a}
\langle M A'(p), A(p) \rangle = 0, \quad \langle M A'(p), M A(p) \rangle \neq 0,
\end{equation}
thanks to equations (\ref{doublezeroLp}), (\ref{pos-eig-Lp}), and the explicit computation:
\begin{equation}
\label{comp-2}
\langle M A'(p), M A(p) \rangle = \sum_{n=0}^{\infty} (n+1)^2 p^{2n} \left[ n(1-p^2) - 2 p^2 \right] = \frac{2 p^2}{(1-p^2)^3} > 0.
\end{equation}
Therefore, the Jordan blocks are simple for the second and third eigenvectors in (\ref{eigenvectors-Zero})
and at least double for the first eigenvector in (\ref{eigenvectors-Zero}). Indeed,
the following generalized eigenvector follows from (\ref{doublezeroLp}):
\begin{equation}
\label{eigenvectors-Zero-gen}
\mathcal{L}(p) \vec{\texttt{a}}_1 = \mathcal{M} \vec{\texttt{a}}_0, \quad
\vec{\texttt{a}}_1 = -\frac{1}{2} \left[ \begin{array}{c}  A(p) \\ 0 \end{array} \right], \quad
\vec{\texttt{a}}_0 = \left[ \begin{array}{c}  0 \\  A(p) \end{array} \right].
\end{equation}
The corresponding Jordan block is exactly double because $M A(p)$ is not orthogonal to ${\rm ker}(L_-(p))$.
Hence the zero eigenvalue has multiplicity four with three eigenvectors in (\ref{eigenvectors-Zero}) and
one generalized eigenvector in (\ref{eigenvectors-Zero-gen}).

It remains to study the non-zero eigenvalues of the spectral problem (\ref{spectrum}).
To do so, we study negative eigenvalues of the operators $ M^{-1} L_{\pm}(p)$, which also commute.
Due to the presence of the operators $M$, we introduce a different complement of
the two-dimensional subspace $X_0(p) = {\rm span}\{ A(p), MA(p) \}$
compared to (\ref{X-0-constraints}). Namely, we define
\begin{equation}
\label{Y-0-constraints}
[X_c(p)]^{\perp} := \left\{ a \in \ell^2 : \quad \langle M A(p), a \rangle = \langle M^2 A(p), a \rangle = 0 \right\}.
\end{equation}
Eigenvectors for negative eigenvalues of $M^{-1} L_{\pm}(p)$ belong to $[X_c(p)]^{\perp}$.
Due to the first orthogonality condition in (\ref{Y-0-constraints}), we have
the same computation $B_{\pm}(p) a = 2 T(p) a$ as in (\ref{notations-T}).
Hence, the negative eigenvalues of $M^{-1} L_{\pm}(p)$ are identical to
the negative eigenvalues of $2 M^{-1} T(p) - I$ and hence they are identical to each other.
The spectral problem $L_{\pm} v = \lambda M v$ for $\lambda < 0$ can be rewritten in the equivalent form
\begin{equation}
\label{tilde-T}
T(p) v = \mu M v, \quad \mu := \frac{1+\lambda}{2}.
\end{equation}
Let us prove that the spectral problem (\ref{tilde-T}) admit a countable set of eigenvalues
\begin{equation}
\label{eigenvalues-mu}
\mu_m = \frac{1}{m+1}, \quad m \in \mathbb{N},
\end{equation}
with the corresponding eigenvectors given by
\begin{equation}
\label{eigenvectors-mu}
v^{(m)} = M^m A(p) - \sum_{j=0}^{m-1} \alpha_{j}^{(m)}(p) M^j A(p),
\end{equation}
where the coefficients $\{ \alpha^{(m)}_{j}(p) \}_{j=0}^{m-1}$ are uniquely
found from orthogonality conditions
$$
\langle M^j A(p), v^{(m)} \rangle = 0, \quad 0 \leq j \leq m-1.
$$
Consequently, $v^{(m)} \in [X_c(p)]^{\perp}$ for every $m \geq 2$.
Indeed, we have the first few eigenvalues and eigenvectors explicitly:
\begin{eqnarray*}
\mu_0 = 1 : & \quad & v^{(0)} = A(p), \\
\mu_1 = \frac{1}{2} : & \quad & v^{(1)} = M A(p) - \frac{1+p^2}{1-p^2} A(p), \\
\mu_2 = \frac{1}{3} : & \quad & v^{(2)} = M^2 A(p) + 3 \frac{1+p^2}{1-p^2} M A(p) - 2 \frac{1+p^2+p^4}{(1-p^2)^2} A(p),
\end{eqnarray*}
where we recognize the same eigenvectors $A(p)$ and $A'(p)$ for the first two (positive and zero) eigenvalues
of $M^{-1} L_{\pm}(p)$. Indeed,
$$
M^{-1} L_+(p) A(p) = 2 A(p), \quad M^{-1} L_+(p) A'(p) = 0,
$$
and
$$
M^{-1} L_-(p) A(p) = M^{-1} L_-(p) A'(p) = 0.
$$
In order to prove (\ref{eigenvalues-mu}) and (\ref{eigenvectors-mu}) for every $m \in \mathbb{N}$,
we represent
\begin{eqnarray}
\nonumber
[T(p) M^m A(p)]_n & = & (1-p^2) \left[ \sum_{j\in \mathbb{N}} (1+j)^m p^{|n-j|} p^j - \sum_{j\in \mathbb{N}} (1+j)^m p^{2+n+2j} \right]\\
\nonumber
& = & \left[ \sum_{j=0}^n (1+j)^m + \sum_{j=n+1}^{\infty} (1+j)^m p^{2(j-n)} - \sum_{j=0}^{\infty} (1+j)^m p^{2+2j} \right] A_n(p)\\
\label{powers-cancel}
& = & \left[ \sum_{k=1}^{n+1} k^m + p^2 \sum_{k=0}^{\infty} \left[ (1+k+1+n)^m - (1+k)^m \right] p^{2k} \right] A_n(p).
\end{eqnarray}
Because of the cancelation at the last sum at the zero power of $(n+1)$ in (\ref{powers-cancel}),
the right-hand side is written in positive powers of $(n+1)$ up to the $(m+1)$ power, namely,
\begin{eqnarray}
\label{decomposition-M-m}
T(p) M^m A(p) = \sum_{j=1}^{m+1} \beta_{j}^{(m)}(p) M^j A(p),
\end{eqnarray}
where $\{ \beta_{j}^{(m)}(p) \}_{j=1}^{m+1}$ are uniquely defined and $\beta_{0}^{(m)}(p) = 0$.
In particular, the first sum in (\ref{powers-cancel}) shows that $\beta_{m+1}^{(m)}(p) = \frac{1}{m+1}$, which yields the eigenvalue
$\mu_m = \frac{1}{m+1}$ of the spectral problem (\ref{tilde-T}). Now, adding a linear combination of
$m$ terms $\{ M^j A(p) \}_{j=0}^{m-1}$ to $M^m A(p)$ as in (\ref{eigenvectors-mu}) and using
expressions (\ref{decomposition-M-m}) inductively from $j = m-1$ to $j = 0$, we obtain a linear system of $m$
equations for $m$ coefficients $\{ \alpha_j^{(m)}(p) \}_{j=0}^{m-1}$. The linear system is associated with
a triangular matrix with nonzero diagonal coefficients, hence, it admits a unique solution for $\{ \alpha_j^{(m)}(p) \}_{j=0}^{m-1}$.
Hence, the validity of (\ref{eigenvalues-mu}) and (\ref{eigenvectors-mu}) is proven.

Thanks to the relation between $\mu$ and $\lambda$ in (\ref{tilde-T}), we have shown
that the negative eigenvalues of  $M^{-1} L_{\pm}(p)$ are given by
\begin{equation}
\label{lambda-m}
\lambda_m = 2\mu_m - 1 = -\frac{m-1}{m+1}, \quad m\geq 2.
\end{equation}
The common set of eigenvectors of $M^{-1} L_+(p)$ and $M^{-1} L_-(p)$ for negative eigenvalues $\lambda$ coincides with the set
of eigenvectors of the spectral problem (\ref{spectrum}) for nonzero eigenvalues $\Lambda$.
Thus, the nonzero eigenvalues of the spectral problem (\ref{spectrum})
are given by $\Lambda_m^2 = -\lambda_m^2$, which yields
the explicit expression (\ref{spectrum-L-plus-minus}) thanks to (\ref{lambda-m}).
\end{proof}

\begin{rem}
Similarly to the $N=0$ single-mode state (\ref{1mode}),
all eigenvalues of the spectral problem (\ref{spectrum}) for the ground state \eqref{ground-state} are semi-simple,
except for the double zero eigenvalue related to the gauge symmetry (\ref{symmshift1}).
\end{rem}

\section{Orbital stability of the $N = 0$ single-mode state}
\label{section-nonlinear}

Here we prove Theorem \ref{theorem-one-mode} which states the orbital stability of
the $N=0$ single-mode state (\ref{1mode}) with the normalization $c = 1$ or $\lambda = 1$.
Since this is the limit $p \to 0$ of the ground state \eqref{ground-state},
we will often use the expression $A_n(0) = \delta_{n0}$ as in (\ref{deltaKronecker}).
In order to prove Theorem \ref{theorem-one-mode}, we decompose a solution of the conformal flow (\ref{flow}) into a sum of the two-parameter
orbit of the ground state generated by the symmetries (\ref{symmscale}) and
(\ref{symmshift1}), as well as the symplectically orthogonal remainder term.
The following lemma provides a basis for such a decomposition.

\begin{lemma}
\label{lem-orthogonal}
There exist $\delta_0 > 0$
such that for every $\alpha \in \ell^2$ satisfying
\begin{equation}
\label{orth-given}
\delta := \inf_{\theta \in \mathbb{S}} \| \alpha -   e^{i \theta}  A(0) \|_{\ell^2} \leq \delta_0,
\end{equation}
there exists a unique choice of real-valued numbers $(c,\theta)$ and real-valued sequences $a,b \in \ell^2$
in the orthogonal decomposition
\begin{equation}
\label{orth-decomposition}
\alpha_n = e^{i \theta} \left( c A_n(0) + a_n + i b_n \right),
\end{equation}
subject to the orthogonality conditions
\begin{equation}
\langle MA(0), a \rangle = \langle MA(0), b \rangle = 0,
\end{equation}
satisfying the estimate
\begin{equation}
\label{orth-bound}
| c - 1 | + \| a + i b \|_{\ell^2} \lesssim \delta.
\end{equation}
\end{lemma}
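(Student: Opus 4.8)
The plan is to prove this modulation lemma by the implicit function theorem, exploiting that the reference point $A(0)=\delta_{\cdot 0}$ is the first coordinate vector. First I would rewrite the decomposition \eqref{orth-decomposition}: requiring $a,b$ to be real forces $a=\Re(e^{-i\theta}\alpha)-cA(0)$ and $b=\Im(e^{-i\theta}\alpha)$ (using that $A(0)$ is real), so the only free data is the pair $(c,\theta)$. Since $M={\rm diag}(1,2,\dots)$ fixes $\delta_{\cdot 0}$, one has $MA(0)=A(0)$, and the two orthogonality conditions collapse to the two scalar equations
\begin{equation*}
\Phi_1(\alpha,c,\theta):=\langle A(0),\Re(e^{-i\theta}\alpha)\rangle-c=0,\qquad
\Phi_2(\alpha,c,\theta):=\langle A(0),\Im(e^{-i\theta}\alpha)\rangle=0,
\end{equation*}
which in fact just say $e^{-i\theta}\alpha_0=c\in\mathbb{R}$, so the lemma is nearly explicit here; but the implicit-function framing is the one that generalizes to $p>0$.

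Next I would verify the hypotheses of the implicit function theorem for $\Phi=(\Phi_1,\Phi_2):\ell^2\times\mathbb{R}\times\mathbb{S}\to\mathbb{R}^2$. The map is smooth (affine in $\alpha$, trigonometric in $\theta$) and $\Phi(A(0),1,0)=0$. Differentiating with $\partial_\theta\Re(e^{-i\theta}\alpha)=\Im(e^{-i\theta}\alpha)$ and $\partial_\theta\Im(e^{-i\theta}\alpha)=-\Re(e^{-i\theta}\alpha)$ and evaluating at the real vector $A(0)$, the $(c,\theta)$-Jacobian of $\Phi$ at the base point equals $-\mathrm{Id}_{\mathbb{R}^2}$, hence is invertible. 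The implicit function theorem then yields a ball $\mathcal{U}\subset\ell^2$ about $A(0)$ and smooth functions $\mathcal{U}\ni\beta\mapsto(c(\beta),\theta(\beta))$ with $(c(A(0)),\theta(A(0)))=(1,0)$, solving $\Phi(\beta,c(\beta),\theta(\beta))=0$, unique among $(c,\theta)$ near $(1,0)$, and Lipschitz: $|c(\beta)-1|+|\theta(\beta)|\lesssim\|\beta-A(0)\|_{\ell^2}$.

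Finally I would upgrade this pointwise statement to a neighborhood of the full orbit. Given $\alpha$ with $\delta:=\inf_{\theta\in\mathbb{S}}\|\alpha-e^{i\theta}A(0)\|_{\ell^2}\le\delta_0$, compactness of $\mathbb{S}$ provides a minimizer $\theta_0$, so $\beta:=e^{-i\theta_0}\alpha$ satisfies $\|\beta-A(0)\|_{\ell^2}=\delta$ and lies in $\mathcal{U}$ once $\delta_0$ is small; then $\theta:=\theta_0+\theta(\beta)$, $c:=c(\beta)$ and the induced $a,b$ satisfy \eqref{orth-decomposition} and the orthogonality conditions, and since $a+ib=e^{-i\theta(\beta)}\beta-c(\beta)A(0)$, the triangle inequality with $|e^{-i\theta(\beta)}-1|\le|\theta(\beta)|$ and the Lipschitz bound give $|c-1|+\|a+ib\|_{\ell^2}\lesssim\|\beta-A(0)\|_{\ell^2}=\delta$, i.e.\ \eqref{orth-bound}. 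For uniqueness, any admissible $(c,\theta,a,b)$ must solve $\Phi(\alpha,c,\theta)=0$, and the bound \eqref{orth-bound} with $\delta_0$ small forces $c$ near $1$ (in particular $c>0$, excluding the $(\theta+\pi,-c)$ branch) and $\theta$ near $\theta_0$ modulo $2\pi$, so $(c,\theta-\theta_0)$ falls in the IFT uniqueness window and equals $(c(\beta),\theta(\beta))$, after which $a,b$ are determined. I expect the only genuinely fiddly point to be this last bookkeeping step — matching the pointwise IFT output to the orbit and disposing of the discrete phase ambiguity — and it is handled purely by shrinking $\delta_0$; the IFT verification itself is routine and especially transparent here because $A(0)$ is a single coordinate vector.
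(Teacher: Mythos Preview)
Your proposal is correct and follows essentially the same approach as the paper: both define the same map $F=\Phi$ encoding the two orthogonality conditions, compute its $(c,\theta)$-Jacobian at the base point to be $-\mathrm{Id}$ (since $\langle MA(0),A(0)\rangle=1$), apply the implicit function theorem near a minimizing phase $\theta_0$, and then recover the bound on $a+ib$ by the triangle inequality. Your write-up is in fact slightly more detailed than the paper's, particularly in handling the uniqueness and the discrete $(\theta,c)\mapsto(\theta+\pi,-c)$ ambiguity.
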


\begin{proof}
The proof is based on the inverse function theorem applied to the vector function
$F(c,\theta;\alpha) : \mathbb{R}^2 \times \ell^2\mapsto \mathbb{R}^2$ given by
$$
F(c,\theta;\alpha) := \left[ \begin{array}{c} \langle MA(0), {\rm Re}(e^{-i \theta} \alpha - c  A(0)) \rangle \\
\langle MA(0), {\rm Im}(e^{-i \theta} \alpha - c  A(0)) \rangle \end{array} \right].
$$
The Jacobian matrix $DF$ at $A(0)$ is diagonal and invertible
$$
DF(1,0;A(0)) = -\left[ \begin{array}{cc} \langle MA(0), A(0) \rangle & 0 \\
0 & \langle MA(0), A(0) \rangle \end{array} \right].
$$
For sufficiently small $\delta > 0$, there exists a unique root
$(c,\theta)$ near $(1,\theta_0)$, where $\theta_0$ is an argument
in the infimum (\ref{orth-given}), with the bound
$$
|c-1| + |\theta - \theta_0| \lesssim \delta.
$$
 This proves the  bound  for $c$ in (\ref{orth-bound}). By using the definition of $(a,b)$ in the decomposition (\ref{orth-decomposition})
and the triangle inequality for $(c,\theta)$ near $(1,\theta_0)$,
it is then straightforward to show that $(a,b)$ are uniquely defined and satisfy the second bound in (\ref{orth-bound}).
\end{proof}

By Lemma \ref{lem-orthogonal},  any global solution $\alpha(t)\in h^1$ of the conformal flow system \eqref{flow}
satisfying for a sufficiently small positive $\epsilon$ and for every $t$,
\begin{equation}
\label{apriori-bound}
\inf_{\theta \in \mathbb{S}} \| \alpha(t) - e^{i\theta} A(0) \|_{\ell^2} \leq \epsilon
\end{equation}
admits  a unique decomposition in the form
\begin{equation}
\label{orth-decomposition-time}
\alpha_n(t) = e^{i \theta(t)} \left( c(t) A_n(0) + a_n(t) + i b_n(t) \right),
\end{equation}
where the remainder terms satisfy the symplectic orthogonality conditions
\begin{equation}
\label{projection1-2}
\langle MA(0), a(t) \rangle = \langle MA(0), b(t) \rangle = 0.
\end{equation}
Now we shall apply this decomposition to control the global solution starting from a small perturbation of the $N=0$ single-mode state.

\begin{lemma}
\label{lem-control-p-0}
Assume that initial data $\alpha(0) \in h^1$ satisfy
\begin{equation}\label{initial-data-p-0}
\| \alpha(0) - A(0) \|_{h^1} \leq \delta
\end{equation}
for some sufficiently small $\delta > 0$. Then, the corresponding unique global solution
$\alpha(t) \in C(\mathbb{R},h^1)$ of (\ref{flow})
can be represented by the decomposition (\ref{orth-decomposition-time})--(\ref{projection1-2})
satisfying for all $t$
\begin{equation}
\label{final-bound-p-0}
| c(t) - 1 | \lesssim \delta, \quad \| a(t) + i b(t) \|_{h^1} \lesssim \delta^{1/2}.
\end{equation}
\end{lemma}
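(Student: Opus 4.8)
The plan is to use the three conserved quantities $H$, $Q$, $E$ together with the decomposition \eqref{orth-decomposition-time}--\eqref{projection1-2} and the coercivity of the Hessian from Lemma~\ref{lemma2} restricted to the symplectically orthogonal complement. First I would note that a standard continuity/bootstrap argument reduces the problem to the following: as long as the a priori bound \eqref{apriori-bound} holds with a small $\epsilon$, the decomposition exists and one has the improved bound \eqref{final-bound-p-0}; since $\|a+ib\|_{\ell^2}\le \|a+ib\|_{h^1}\lesssim\delta^{1/2}\ll\epsilon$ for $\delta$ small, the a priori assumption is never saturated and the bound propagates to all $t$. So the real content is the estimate on a fixed time slice, given the orthogonality conditions \eqref{projection1-2}.

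Next I would extract the parameter $c(t)$ from conservation of $Q$. Writing $\alpha_n = e^{i\theta}(cA_n(0)+a_n+ib_n)$ and using $A(0)=\delta_{n0}$ together with $\langle MA(0),a\rangle=\langle MA(0),b\rangle=0$, the charge becomes $Q(\alpha)=c^2 Q(A(0)) + \|a+ib\|_{\ell^2,\,M}^2$ where the cross term vanishes by orthogonality; since $Q(\alpha(t))=Q(\alpha(0))=Q(A(0))+\mathcal O(\delta)=1+\mathcal O(\delta)$ and $Q(A(0))=1$, this gives $c(t)^2 = 1 + \mathcal O(\delta) - \|a+ib\|_{\ell^2,\,M}^2$, hence $|c(t)-1|\lesssim \delta + \|a+ib\|_{\ell^2,\,M}^2$, which will be $\lesssim\delta$ once the $h^1$ bound on the remainder is in hand (indeed $\|a+ib\|^2_{\ell^2}\lesssim\delta$ is already enough here). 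Similarly, conservation of $E$ controls $\|a+ib\|^2_{h^1,\,M^2}$: expanding $E(\alpha)=c^2 E(A(0))+\|a+ib\|^2_{h^1,\text{weighted}}+(\text{cross terms})$, where the cross terms are of the form $c\langle M^2 A(0), a\rangle$, $c\langle M^2 A(0), b\rangle$, i.e. $c\,a_0$ and $c\,b_0$ — and these are \emph{not} killed by the imposed orthogonality \eqref{projection1-2}, so they must instead be estimated by $|a_0|+|b_0|\le\|a+ib\|_{\ell^2}\lesssim\delta^{1/2}$, contributing $\mathcal O(\delta^{1/2})$. Combined with $E(\alpha(t))=E(\alpha(0))=E(A(0))+\mathcal O(\delta)$ and $|c-1|\lesssim\delta$, this yields $\|a(t)+ib(t)\|_{h^1}^2\lesssim\delta^{1/2}$, i.e. the claimed bound $\|a+ib\|_{h^1}\lesssim\delta^{1/4}$ — here I would double-check the exponent bookkeeping, since the statement claims $\delta^{1/2}$ and it is likely that a sharper treatment of the $\ell^2$-size of the remainder (via $H$, see below) feeds back to improve this.

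The missing ingredient — controlling $\|a+ib\|_{\ell^2}$ at order $\delta^{1/2}$ rather than merely $\epsilon$ — comes from conservation of $H$ and the variational inequality \eqref{bound-H}. Using $K=\tfrac12 H-\lambda Q$ with $\lambda=1$ and the diagonalized second variation \eqref{expansions-K}, one has $K(\alpha(t))-K(A(0)) = \langle L_+ a,a\rangle + \langle L_- b,b\rangle + \mathcal O(\|a+ib\|^3)$; on the other hand, the left side equals $\tfrac12(H(\alpha(0))-1) - (Q(\alpha(0))-1) + (\text{corrections from }c\neq1) = \mathcal O(\delta)$ by the conservation laws and \eqref{initial-data-p-0}. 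By Lemma~\ref{lemma2}, the $N=0$ single-mode state is a \emph{maximizer} of $H$ under fixed $Q$ with only a double zero eigenvalue and the rest of the spectrum strictly negative; the zero eigenvalue directions correspond to the gauge symmetry \eqref{symmshift1} (the $b$-direction along $A(0)$) and the scaling direction (the $a$-direction along $A(0)$), which are precisely the two degrees of freedom $\theta$ and $c$ that the decomposition has absorbed via \eqref{projection1-2}. Hence $-\langle L_\pm u,u\rangle\gtrsim \|u\|_{\ell^2}^2$ for $u$ in the symplectic orthogonal complement, giving $\|a+ib\|^2_{\ell^2}\lesssim \delta + \|a+ib\|^3_{\ell^2}$, and a bootstrap on the cubic term (valid since $\|a+ib\|_{\ell^2}\le\epsilon$ is small) yields $\|a+ib\|_{\ell^2}\lesssim\delta^{1/2}$. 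The main obstacle is exactly this coercivity step: one must verify that the two constraints in \eqref{projection1-2} genuinely remove the two non-negative (zero) eigendirections of $L_\pm$ and that the remaining spectrum is bounded away from zero uniformly — which is what Lemma~\ref{lemma2} provides for $N=0$ — and then the interplay of this $\ell^2$-coercivity (from $H$) with the $h^1$-control (from $E$) gives the two-tier estimate $|c-1|\lesssim\delta$, $\|a+ib\|_{h^1}\lesssim\delta^{1/2}$ after feeding the sharpened $\ell^2$ bound back into the $E$-computation.
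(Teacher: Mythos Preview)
There is a genuine gap, rooted in a misreading of what the orthogonality conditions \eqref{projection1-2} actually say. Since $A_n(0)=\delta_{n0}$, the vector $MA(0)$ is again $\delta_{n0}$, so $\langle MA(0),a\rangle=a_0$ and $\langle MA(0),b\rangle=b_0$. Thus \eqref{projection1-2} is precisely $a_0=b_0=0$, and the cross terms you worry about in the expansion of $E$ (namely $c\langle M^2A(0),a\rangle=ca_0$ and its $b$-analogue) vanish identically. Once you use this, the paper's argument is two lines: $Q(\alpha)=c^2+\sum_{n\ge1}(n+1)(a_n^2+b_n^2)$ and $E(\alpha)=c^2+\sum_{n\ge1}(n+1)^2(a_n^2+b_n^2)$, so $E-Q=\sum_{n\ge1}n(n+1)(a_n^2+b_n^2)\lesssim\delta$ gives the $h^1$ bound directly, and substituting back gives $|c-1|\lesssim\delta$. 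No use of $H$ is needed, and the exponent $\delta^{1/2}$ comes out immediately.

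Your detour through $H$ and the Hessian is not only unnecessary but actually fails as written. Lemma~\ref{lemma2} says that for $N=0$ the constrained problem still has a \emph{double} zero eigenvalue; these zero modes sit at $n=1$ (they correspond to $A'(0)=\delta_{n1}$, i.e.\ variation in the parameter $p$, and its imaginary counterpart from the second gauge symmetry \eqref{symmshift2}, which acts trivially on $A(0)$). Neither of these directions is absorbed by the two-parameter decomposition $(c,\theta)$, so the coercivity $-\langle L_{\pm}u,u\rangle\gtrsim\|u\|_{\ell^2}^2$ on the constrained subspace is false: the $n=1$ component of $a+ib$ is invisible to $K$. This is why the paper bypasses $H$ entirely for $p=0$ and uses the pair $(Q,E)$ instead, which is coercive precisely because $E-Q$ picks up the weight $n(n+1)\ge2$ on every mode $n\ge1$.
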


\begin{proof}
Since $A_n(0) = \delta_{n0}$, the orthogonality conditions (\ref{projection1-2}) yield $a_0 = b_0 = 0$.
Substituting the representation (\ref{orth-decomposition-time}) into the conservation laws (\ref{charge}) and (\ref{lenergy}), we obtain
\begin{eqnarray}
\label{Q-expansion}
Q(\alpha(0)) = Q(\alpha(t)) = c(t)^2 + \sum_{n=1}^{\infty} (n+1) (a_n^2 + b_n^2)\,,
\end{eqnarray}
and
\begin{eqnarray}
\label{E-expansion}
E(\alpha(0)) = E(\alpha(t)) = c(t)^2 + \sum_{n=1}^{\infty} (n+1)^2 (a_n^2 + b_n^2)\,.
\end{eqnarray}
Thanks to (\ref{initial-data-p-0}) we have
\begin{equation}
\label{bound-Delta-p-0}
| Q(\alpha(0)) - 1| \lesssim \delta, \quad |E(\alpha(0)) - 1 | \lesssim \delta\,.
\end{equation}
Subtracting (\ref{Q-expansion}) from (\ref{E-expansion}) and using \eqref{bound-Delta-p-0}, we obtain
$$
\sum_{n=1}^{\infty} n (n+1) (a_n^2 + b_n^2) \lesssim \delta,
$$
which yields the second bound in (\ref{final-bound-p-0}). Substituting this bound into \eqref{E-expansion}
we obtain the first bound in (\ref{final-bound-p-0}). By continuity of the solution $\alpha(t) \in C(\mathbb{R},h^1)$
of (\ref{flow}), the bound (\ref{apriori-bound}) is satisfied for every $t \in \mathbb{R}$
if it is satisfied for $t = 0$. Therefore, by Lemma \ref{lem-orthogonal},
the decomposition (\ref{orth-decomposition-time})--(\ref{projection1-2}) holds for every $t$
and the bounds (\ref{final-bound-p-0}) are continued for every $t$.
\end{proof}

Theorem \ref{theorem-one-mode} is the  reinstatement of the result of Lemma \ref{lem-control-p-0}
with $\epsilon = \mathcal{O}(\delta^{1/2})$ or, equivalently, $\delta = \mathcal{O}(\epsilon^2)$.

\begin{rem}
Bounds in (\ref{final-bound-p-0}) can be improved for perturbations with $\alpha_0(0)=0$ since then $c(0) = 1$ in the decomposition
(\ref{orth-decomposition-time}). In this case, $\delta$ is replaced by $\delta^2$ in (\ref{bound-Delta-p-0}),
hence $\delta^{1/2}$ is replaced by $\delta$ in the bounds (\ref{final-bound-p-0}).
\end{rem}

\section{Orbital stability of the ground state}
\label{section-ground-state}

Here we prove Theorem \ref{theorem-ground-state}. As the first step
we establish some coercivity estimates for the operators $L_{\pm}(p)$
defined in \eqref{Hessian-r2} and \eqref{Hessian-explicit}. Let us redefine
$[X_c(p)]^{\perp}$ in (\ref{Y-0-constraints}) by using a different but equivalent choice
of the two orthogonality conditions:
\begin{equation}
\label{constraint-ground-state}
[X_c(p)]^{\perp} := \left\{ a \in \ell^2(\mathbb{N}) : \quad \langle MA(p), a \rangle = \langle M A'(p), a \rangle = 0 \right\}.
\end{equation}
This is a {\em symplectically orthogonal} subspace to $X_0(p) = {\rm span}\{ A(p), A'(p) \}$,
the two-dimensional subspace associated with the positive and zero eigenvalues of $L_{\pm}(p)$
 (recall (\ref{zeroLp}) that relates $A'(p)$ to $A(p)$ and $MA(p)$).
Let us introduce the symplectically orthogonal projection operator
$\Pi_c(p) : \ell^2(\mathbb{N}) \to [X_c(p)]^{\perp} \subset \ell^2(\mathbb{N})$.
The following lemma shows  that the operators $\Pi_c(p) L_{\pm}(p) \Pi_c(p)$ are negative and coercive
on $[X_c(p)]^{\perp}$ and that the coercivity constant is independent of $p \in [0,1)$.

\begin{lemma}
\label{lemma-L-4}
Given $a\in h^{1/2}$, for every $p \in [0,1)$ we have
\begin{equation}
\label{coercivity}
\langle \Pi_c(p) L_{\pm}(p) \Pi_c(p) a, a \rangle \lesssim -\| a \|^2_{h^{1/2}}.
\end{equation}
\end{lemma}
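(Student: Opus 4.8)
The key observation from Lemma \ref{lemma-L-3} is that the negative eigenvalues of the operator pencil $L_{\pm}(p) - \mu M$ (equivalently, of $M^{-1}L_{\pm}(p)$) are $\lambda_m = -(m-1)/(m+1)$ for $m \geq 2$, which are bounded strictly away from zero by $-1/3$, uniformly in $p$. The nonpositive part of the spectrum of $L_{\pm}(p)$ consists of these plus the zero eigenvalue(s) attached to the subspace $X_0(p) = \operatorname{span}\{A(p), A'(p)\}$. So the strategy is: restrict to the symplectically orthogonal complement $[X_c(p)]^{\perp}$ defined in \eqref{constraint-ground-state}, where by Lemma \ref{lemma-L-3} and its proof the operator $L_{\pm}(p)$ is negative definite with spectral gap, and then upgrade the resulting $\ell^2$-coercivity to $h^{1/2}$-coercivity using the unbounded diagonal part $-M$ of $L_{\pm}(p)$.

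First I would decompose $L_{\pm}(p) = -M + B_{\pm}(p)$ and, for $a \in [X_c(p)]^{\perp}$, use the identity $B_{\pm}(p)a = 2T(p)a$ from \eqref{notations-T} (valid because of the constraint $\langle MA(p),a\rangle = 0$), so that $\langle L_{\pm}(p)a,a\rangle = \langle (2T(p)-M)a,a\rangle$ on this subspace and is the \emph{same} for both signs. From the spectral analysis, $2T(p)-M$ restricted to $[X_c(p)]^{\perp}$ (wait — more precisely to the complement of $X_0(p)$) has spectrum in $(-\infty, -1/3]$; since $A(p), MA(p), A'(p)$ span the full nonnegative eigenspace, on $[X_c(p)]^{\perp}$ we get $\langle L_{\pm}(p)a,a\rangle \leq -\tfrac13 \|a\|_{\ell^2}^2$. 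To get the $h^{1/2}$ bound, split $\langle (2T(p)-M)a,a\rangle = -\tfrac12\langle Ma,a\rangle + \langle (T(p)-\tfrac12 M)a,a\rangle$. The first term is $-\tfrac12\|a\|_{h^{1/2}}^2$, exactly the norm we want. The second term, $T(p)-\tfrac12 M$, has all eigenvalues of the pencil $T(p) - \mu M$ equal to $\mu_m = 1/(m+1) \leq 1/3 < 1/2$ for $m \geq 2$ on the relevant subspace (again by \eqref{eigenvalues-mu}), so $\langle (T(p)-\tfrac12 M)a,a\rangle \leq (\tfrac13-\tfrac12)\langle Ma,a\rangle \leq 0$; combining, $\langle L_{\pm}(p)a,a\rangle \leq -\tfrac12\|a\|_{h^{1/2}}^2$. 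Finally, since $\Pi_c(p)a \in [X_c(p)]^{\perp}$ and $\Pi_c(p)$ is a bounded projection, applying the above to $\Pi_c(p)a$ gives \eqref{coercivity} with an implied constant independent of $p$.

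The one point requiring care is the \textbf{uniformity in $p$ as $p \to 1$}. The eigenvectors $A(p)$, $MA(p)$, $A'(p)$ spanning $X_0(p)$ behave badly in that limit (for instance $E(A(p)) = (1+p^2)/(1-p^2) \to \infty$), so one must check that the projection $\Pi_c(p)$ does not blow up in the relevant operator norm, or else phrase the estimate so that it only ever uses the spectral pencil inequality $T(p) \preceq \mu_2 M + (\text{rank-2 correction})$ restricted to $[X_c(p)]^{\perp}$, which by construction sees none of the bad directions. I expect the cleanest route is the latter: work directly with the quadratic form on $[X_c(p)]^{\perp}$, where the spectral decomposition of Lemma \ref{lemma-L-3} gives the pencil inequality $\langle T(p)a,a\rangle \leq \tfrac13\langle Ma,a\rangle$ for all $a \in [X_c(p)]^{\perp}$ with a $p$-independent constant, and then only at the very end note $\|\Pi_c(p)a - a\|$ is controlled — or better, absorb the projection into the statement so boundedness of $\Pi_c(p)$ on $h^{1/2}$ uniformly in $p$ is what needs checking. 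This last uniform boundedness of $\Pi_c(p)$ is the main obstacle; it follows from showing the Gram matrix of $\{A(p), A'(p)\}$ against $\{MA(p), MA'(p)\}$ (in the $h^{1/2}$ pairing) is uniformly invertible, which reduces to the explicit computations already present around \eqref{comp-2}, suitably rescaled.
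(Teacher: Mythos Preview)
Your approach is correct in substance and genuinely different from the paper's. The paper argues abstractly: it invokes a general constrained-eigenvalue theorem (Theorem~4.1 in \cite{Pel-book}) together with the sign conditions \eqref{comp-1}--\eqref{comp-1aa} to conclude that the nonnegative eigenvalues of $L_{\pm}(p)$ are pushed strictly below zero by the two constraints in \eqref{constraint-ground-state}, and then appeals to a G{\aa}rding-type interpolation (spectral gap in $\ell^2$ plus boundedness of $B_{\pm}(p)$) to upgrade to $h^{1/2}$. You instead work directly with the pencil eigenvalues $\mu_m = 1/(m+1)$ of $T(p)v = \mu M v$ from Lemma~\ref{lemma-L-3}: since the symplectic constraints remove exactly the $m=0,1$ modes, the inequality $\langle T(p)a,a\rangle \le \tfrac13\langle Ma,a\rangle$ holds on $[X_c(p)]^{\perp}$ and immediately yields $\langle L_{\pm}(p)a,a\rangle = \langle (2T(p)-M)a,a\rangle \le -\tfrac13\|a\|_{h^{1/2}}^2$. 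This is cleaner and produces an explicit, manifestly $p$-independent constant, whereas the paper's G{\aa}rding route uses the $\ell^2$ operator bound on $B_{\pm}(p)$, which actually diverges as $p\to 1$, so the claimed $p$-uniformity there is not transparent.

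Two small corrections. First, your splitting $\langle (2T(p)-M)a,a\rangle = -\tfrac12\langle Ma,a\rangle + \langle (T(p)-\tfrac12 M)a,a\rangle$ drops a factor of $2$ in front of $T(p)$; the direct bound $2\langle T(p)a,a\rangle - \langle Ma,a\rangle \le (\tfrac23 - 1)\langle Ma,a\rangle$ gives constant $-\tfrac13$, not $-\tfrac12$. Second, your final paragraph about uniform boundedness of $\Pi_c(p)$ is a red herring: the lemma, as the paper states just before it, is meant for $a\in [X_c(p)]^{\perp}$ (this is exactly how it is used in Lemma~\ref{lem-control-p}, where the orthogonality conditions \eqref{projection1}--\eqref{projection2} place $a,b$ in that subspace), so $\Pi_c(p)a=a$ and no projection estimate is needed.
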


\begin{proof}
Recall that $L_+(p)$ has the simple positive eigenvalue $\lambda_*(p)$ with the eigenvector  $MA(p)$ and 
the simple zero eigenvalue with the eigenvector $A'(p)$, whereas 
the remaining eigenvalues are negative. Using \eqref{doublezeroLp}, we have
\begin{equation}
\label{comp-1}
\langle [L_+(p)]^{-1} M A(p), MA(p) \rangle = \frac{1}{2} \langle A(p), MA(p)\rangle > 0.
\end{equation}
By Theorem 4.1 in \cite{Pel-book}, this implies that the positive eigenvalue of $L_+(p)$ becomes a strictly negative eigenvalue of  $\Pi_c(p) L_+(p) \Pi_c(p)$
under the first constraint in (\ref{constraint-ground-state}).
On the other hand, since
\begin{equation}
\label{comp-1aa}
\langle M A'(p), A'(p) \rangle > 0,
\end{equation}
the zero eigenvalue of $L_+(p)$ becomes a strictly negative eigenvalue of $\Pi_c(p) L_+(p) \Pi_c(p)$ under
the second constraint in (\ref{constraint-ground-state}). Thus, $\Pi_c(p) L_+(p) \Pi_c(p)$
is strictly negative with the spectral gap (the distance between the negative spectrum of $\Pi_c(p) L_+(p) \Pi_c(p)$ and zero).
The coercivity bound (\ref{coercivity}) for $\Pi_c(p) L_+(p) \Pi_c(p)$
follows from standard spectral theorem and G{\aa}rding inequality
since the quadratic form for $\Pi_c(p) L_+(p) \Pi_c(p)$ is bounded in $h^{1/2}(\mathbb{N})$.

The operator $L_{-}(p)$ has a double zero eigenvalue and the remaining eigenvalues are negative. The eigenvectors
for the double zero eigenvalue coincide with $MA(p)$ and $A'(p)$, thanks to
 (\ref{zeroLp}) which relates $A'(p)$ to $A(p)$ and $MA(p)$.
The same argument as above yields the bound (\ref{coercivity}) for $\Pi_c(p) L_-(p) \Pi_c(p)$.
\end{proof}

In the second step we decompose  a solution of the system \eqref{flow} into a
 four-parameter family of ground states generated by
 the scaling (\ref{symmscale}) and gauge symmetries  (\ref{symmshift1}) and (\ref{symmshift2}),
 the  parameter $p \in [0,1)$,
as well as the symplectically orthogonal remainder term. More precisely, we have:

\begin{lemma}
\label{lem-orthogonal-p}
For every $p_0 \in (0,1)$, there exists $\delta_0 > 0$ such that
for every $\alpha \in \ell^2$ satisfying
\begin{equation}
\label{orth-given-p}
\delta := \inf_{\theta, \mu \in \mathbb{S}} \| \alpha - e^{i (\theta + \mu + \mu \cdot)} A(p_0) \|_{\ell^2} \leq \delta_0,
\end{equation}
there exists a unique choice for real-valued numbers $(c,p,\theta,\mu)$ and real-valued sequences $a,b \in \ell^2$
in the orthogonal decomposition
\begin{equation}
\label{orth-decomposition-p}
\alpha_n = e^{i (\theta + \mu + \mu n)} \left( c A_n(p) + a_n + i b_n \right),
\end{equation}
subject to the orthogonality conditions
\begin{equation}
\langle MA(p), a \rangle=
\langle MA'(p), a \rangle =
\langle MA(p), b \rangle =
\langle MA'(p), b \rangle =0,
\end{equation}
satisfying the estimate
\begin{equation}
\label{orth-bound-p}
| c - 1 | + |p - p_0 | + \| a + i b \|_{\ell^2} \lesssim \delta.
\end{equation}
\end{lemma}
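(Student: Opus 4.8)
The plan is to reproduce the scheme of Lemma~\ref{lem-orthogonal}, now with four scalar parameters, via the inverse function theorem. First, using the gauge symmetries (\ref{symmshift1})--(\ref{symmshift2}) we may assume that the infimum in (\ref{orth-given-p}) is attained at $\theta_0=\mu_0=0$, so that $\delta=\|\alpha-A(p_0)\|_{\ell^2}$; otherwise we replace $\alpha$ by $e^{-i(\theta_0+\mu_0+\mu_0\cdot)}\alpha$ and relabel $(\theta,\mu)$ accordingly. Recalling that $A_n(p)$ is real, I set
\[
\beta_n(c,p,\theta,\mu;\alpha):=e^{-i(\theta+\mu+\mu n)}\alpha_n-c\,A_n(p),\qquad a_n:=\Re\beta_n,\quad b_n:=\Im\beta_n ,
\]
and define $F(c,p,\theta,\mu;\alpha):\mathbb{R}^4\times\ell^2\to\mathbb{R}^4$ by
\[
F:=\bigl(\,\langle MA(p),\Re\beta\rangle,\ \langle MA'(p),\Re\beta\rangle,\ \langle MA(p),\Im\beta\rangle,\ \langle MA'(p),\Im\beta\rangle\,\bigr)^{T}.
\]
A zero of $F$ is exactly a decomposition of the form (\ref{orth-decomposition-p}) obeying the four orthogonality conditions, and $F$ vanishes at the reference point $(c,p,\theta,\mu;\alpha)=(1,p_0,0,0;A(p_0))$. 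Since $p\mapsto A(p),A'(p)$ are smooth $\ell^2$-valued maps for $|p|<1$, the map $F$ is smooth in $(c,p,\theta,\mu)$ near the reference point and affine in $\alpha$, so the inverse function theorem applies as soon as the Jacobian in the parameters is invertible there.

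The heart of the matter is thus the invertibility of $D_{(c,p,\theta,\mu)}F$ at the reference point. Since $\beta=0$ there, the derivatives falling on $A(p)$ or $A'(p)$ in the first argument of each inner product drop out, and using $\partial_c\beta|_{\rm ref}=-A(p_0)$, $\partial_p\beta|_{\rm ref}=-A'(p_0)$ (both real) together with $\partial_\theta\beta|_{\rm ref}=-iA(p_0)$, $\partial_\mu\beta|_{\rm ref}=-iMA(p_0)$ (both purely imaginary), a direct computation shows that the Jacobian is block diagonal, with $(c,p)$-block $-\left(\begin{smallmatrix}\langle MA,A\rangle&\langle MA,A'\rangle\\ \langle MA',A\rangle&\langle MA',A'\rangle\end{smallmatrix}\right)$ and $(\theta,\mu)$-block $-\left(\begin{smallmatrix}\langle MA,A\rangle&\langle MA,MA\rangle\\ \langle MA',A\rangle&\langle MA',MA\rangle\end{smallmatrix}\right)$, all inner products at $p=p_0$. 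The key observation is that $\langle MA'(p_0),A(p_0)\rangle=0$ by (\ref{comp-2a}) (hence also $\langle MA(p_0),A'(p_0)\rangle=0$), which makes both blocks triangular with determinants $\langle MA(p_0),A(p_0)\rangle\,\langle MA'(p_0),A'(p_0)\rangle$ and $\langle MA(p_0),A(p_0)\rangle\,\langle MA'(p_0),MA(p_0)\rangle$. These are strictly positive because $\langle MA(p_0),A(p_0)\rangle=Q(A(p_0))=1$ by (\ref{GS-conserved}), $\langle MA'(p_0),A'(p_0)\rangle>0$ by (\ref{comp-1aa}), and $\langle MA'(p_0),MA(p_0)\rangle=2p_0^2(1-p_0^2)^{-3}>0$ by (\ref{comp-2}). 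The hypothesis $p_0\in(0,1)$ is used precisely here: at $p_0=0$ one has $MA(0)=A(0)$ and $\langle MA'(0),A(0)\rangle=\langle MA'(0),MA(0)\rangle=0$, so the $(\theta,\mu)$-block degenerates — consistent with the local phase shift (\ref{symmshift2}) acting trivially on the single-mode state and the orbit $\mathcal{A}(0)$ being one-dimensional.

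Granting the invertibility, the inverse function theorem produces, for all $\delta$ below some $\delta_0(p_0)$, a unique $(c,p,\theta,\mu)$ near $(1,p_0,0,0)$ with $F=0$ and the Lipschitz estimate $|c-1|+|p-p_0|+|\theta|+|\mu|\lesssim\delta$; the sequences $a,b$ are then read off from (\ref{orth-decomposition-p}) and satisfy the orthogonality relations by construction. The remaining bound follows by the triangle inequality, using that $p\mapsto A(p)$ is bounded and Lipschitz on a neighborhood of $p_0$ in $(0,1)$:
\[
\|a+ib\|_{\ell^2}=\|e^{-i(\theta+\mu+\mu\cdot)}\alpha-cA(p)\|_{\ell^2}\lesssim(|\theta|+|\mu|)+\|\alpha-A(p_0)\|_{\ell^2}+|p-p_0|+|c-1|\lesssim\delta ,
\]
which is (\ref{orth-bound-p}). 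I expect the invertibility computation above to be the only genuinely delicate step; the rest is the routine implicit-function-theorem argument of Lemma~\ref{lem-orthogonal}, the one thing to watch being that $p$ stays in a compact subinterval of $(0,1)$ so that $A(p)$, $A'(p)$ and the inner products above remain uniformly controlled.
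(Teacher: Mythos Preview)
Your proof is correct and follows essentially the same approach as the paper: apply the inverse function theorem to the map $F(c,p,\theta,\mu;\alpha)$ whose zeros encode the four orthogonality conditions, compute the block-diagonal Jacobian at $(1,p_0,0,0;A(p_0))$, and use (\ref{comp-2a}), (\ref{comp-1aa}), (\ref{comp-2}) to verify invertibility. Your presentation is in fact a bit more explicit than the paper's, in particular the observation that $p_0\in(0,1)$ is needed precisely because $\langle MA'(0),MA(0)\rangle=0$ is a nice addition.
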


\begin{proof}
The proof is based on the inverse function theorem applied to the vector function
$F(c,p,\theta,\mu;\alpha) : \mathbb{R}^4 \times \ell^2(\mathbb{N}) \mapsto \mathbb{R}^4$ given by
$$
F(c,p,\theta,\mu;\alpha) := \left[ \begin{array}{c} \langle MA(p), {\rm Re}(e^{-i (\theta + \mu + i \mu \cdot)} \alpha - cA(p)) \rangle \\
\langle MA'(p), {\rm Re}(e^{-i (\theta +i \mu + i \mu \cdot)} \alpha - cA(p)) \rangle \\
\langle MA(p), {\rm Im}(e^{-i (\theta +i \mu + i \mu \cdot)} \alpha - cA(p)) \rangle \\
\langle MA'(p), {\rm Im}(e^{-i (\theta +i \mu + i \mu \cdot)} \alpha - cA(p)) \rangle \end{array} \right],
$$
The Jacobian matrix $DF$ at $\alpha = A(p_0)$ is block-diagonal
$$
DF(1,p_0,0,0;A(p_0)) = \left[ \begin{array}{cc} D_1 & 0 \\ 0 & D_2 \end{array} \right],
$$
where
$$
D_1 = -\left[ \begin{array}{cc} \langle MA(p), A(p) \rangle & 0 \\ \langle MA'(p), A(p) \rangle & \langle MA'(p), A'(p) \rangle
\end{array} \right]
$$
and
$$
D_2 = -\left[ \begin{array}{cc}  \langle MA(p), A(p) \rangle & \langle MA(p), MA(p) \rangle \\
0 & \langle MA'(p), MA(p) \rangle \end{array} \right].
$$
Therefore, $DF(1,p_0,0,0;A(p_0))$ is invertible
with the $\mathcal{O}(1)$ bound on the inverse matrix for every $p_0 \in (0,1)$. Hence,
for sufficiently small $\delta > 0$, there exists a unique root
$(c,p,\theta,\mu)$ near $(1,p_0,\theta_0,\mu_0)$, where $(\theta_0,\mu_0)$ are
arguments in the infimum (\ref{orth-given-p}), with the bound
$$
|c-1| + |p-p_0| + |\theta - \theta_0| + |\mu - \mu_0| \lesssim \delta.
$$
Thus, the first two bounds in (\ref{orth-bound-p})
are satisfied for $c$ and $p$.
By using the definition of $(a,b)$ in the decomposition (\ref{orth-decomposition-p})
and the triangle inequality for $(c,p,\theta,\mu_0)$ near $(1,p_0,\theta_0,\mu_0)$,
it is then straightforward to show
that $(a,b)$ are uniquely defined and satisfies the last bound in (\ref{orth-bound-p}).
\end{proof}

For any global solution $\alpha(t) \in C(\mathbb{R},h^1)$  of \eqref{flow} which stays close to the ground state orbit
$\mathcal{A}(p_0)$ in $\ell^2$, i.e.
\begin{equation}
\label{apriori-bound-p}
\inf_{\theta, \mu \in \mathbb{S}} \| \alpha(t) - e^{i (\theta + \mu + \mu \cdot)} A(p_0) \|_{\ell^2}  \leq  \epsilon
\end{equation}
for a sufficiently small positive $\epsilon$,
Lemma \ref{lem-orthogonal-p} yields the unique decomposition in the form
\begin{equation}
\label{orth-decomposition-time-p}
\alpha_n(t) = e^{i (\theta(t) +  (n+1) \mu(t))} \left( c(t) A_n(p(t)) + a_n(t) + i b_n(t) \right),
\end{equation}
where the remainder terms satisfy the symplectic orthogonality conditions
\begin{equation}
\label{projection1}
\langle MA(p(t)), a(t) \rangle =
\langle MA'(p(t)), a(t) \rangle = 0
\end{equation}
and
\begin{equation}
\label{projection2}
\langle MA(p(t)), b(t) \rangle
= \langle MA'(p(t)), b(t) \rangle =0.
\end{equation}

By the coercivity bounds in Lemma \ref{lemma-L-4}, we control $c(t)$, $a(t)$, and $b(t)$ as follows.

\begin{lemma}
\label{lem-control-p}
Assume that the initial data $\alpha(0) \in h^1$
satisfy
\begin{equation}
\label{initial-data-p}
\| \alpha(0) - A(p_0) \|_{h^1} \leq \delta,
\end{equation}
for some $p_0 \in [0,1)$ and a sufficiently small $\delta > 0$.
Then, the corresponding unique global solution
$\alpha(t) \in C(\mathbb{R}_+,h^1)$ of (\ref{flow})
can be represented by the decomposition (\ref{orth-decomposition-time-p}) with
(\ref{projection1}) and (\ref{projection2}) satisfying for all $t$
\begin{equation}
\label{final-bound-p}
| c(t) - 1 | + \| a(t) + i b(t) \|_{h^{1/2}} \lesssim \delta.
\end{equation}
\end{lemma}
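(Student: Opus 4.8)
The plan is a bootstrap argument combining conservation of $H$ and $Q$ (through the nonnegative conserved quantity $G:=Q^2-H$ of Theorem~\ref{theorem-bound}) with the $p$-uniform coercivity of Lemma~\ref{lemma-L-4}. Write $w:=a+ib$ and $q:=\|w\|_{h^{1/2}}^2=\langle Ma,a\rangle+\langle Mb,b\rangle$. Since the phase factor $e^{i(\theta+(n+1)\mu)}$ in \eqref{orth-decomposition-time-p} is a composition of the gauge symmetries \eqref{symmshift1}--\eqref{symmshift2}, the values of $H$ and $Q$ on $\alpha(t)$ equal their values on $cA(p)+w$. Because \eqref{projection1}--\eqref{projection2} give in particular $\langle MA(p),a\rangle=\langle MA(p),b\rangle=0$, the charge has the \emph{exact} expansion $Q(\alpha(t))=c(t)^2+q(t)$, with no linear term in $w$. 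The same orthogonality kills the linear term in the Taylor expansion of $H$ about the standing wave $cA(p)$, whose Lagrange multiplier is $\lambda=c^2$; since the operators $L_\pm$ at this rescaled standing wave equal $c^2L_\pm(p)$ (the Hessian of $H$ and that of $\lambda Q$ with $\lambda=c^2$ both scale as $c^2$), \eqref{expansions-K} gives
\[
H(\alpha(t))=c^4+2c^2\big[\langle(L_+(p)+M)a,a\rangle+\langle(L_-(p)+M)b,b\rangle\big]+\mathcal{O}(\|w\|_{h^{1/2}}^3),
\]
the cubic/quartic remainder being controlled in $h^{1/2}$ since $H$ is a bounded quartic functional on $h^{1/2}$ by Theorem~\ref{theorem-bound}.

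Next I would form $G=Q^2-H$: inserting the two expansions, all the $M$-terms cancel and one obtains the identity
\[
G(\alpha(t))=q(t)^2-2c(t)^2\big[\langle L_+(p(t))a(t),a(t)\rangle+\langle L_-(p(t))b(t),b(t)\rangle\big]+\mathcal{O}(\|w(t)\|_{h^{1/2}}^3).
\]
Now $G$ is conserved; since $A(p_0)$ is a global minimizer of $G$ with $G(A(p_0))=0$ and vanishing first variation (cf.\ \eqref{delta-G}), and $G$ is polynomial, hence $C^2$, on $h^{1/2}$, the initial bound \eqref{initial-data-p} gives $0\le G(\alpha(0))=G(\alpha(t))\lesssim\delta^2$. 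On the other hand, \eqref{projection1}--\eqref{projection2} place $a(t),b(t)\in[X_c(p(t))]^\perp$ (see \eqref{constraint-ground-state}), so Lemma~\ref{lemma-L-4} gives $\langle L_+(p(t))a(t),a(t)\rangle+\langle L_-(p(t))b(t),b(t)\rangle\le-c_0\,q(t)$ with $c_0>0$ \emph{independent of} $p(t)\in[0,1)$. Combining, $2c_0c(t)^2q(t)\le G(\alpha(0))+\mathcal{O}(q(t)^{3/2})\lesssim\delta^2+q(t)^{3/2}$; since $c(t)^2=Q(\alpha(0))-q(t)=1+\mathcal{O}(\delta)$ is bounded away from zero, for $q(t)$ small the cubic term is absorbed, giving $q(t)\lesssim\delta^2$, i.e.\ $\|a(t)+ib(t)\|_{h^{1/2}}\lesssim\delta$, and then $|c(t)-1|\lesssim\delta$ from $c(t)^2=Q(\alpha(0))-q(t)$.

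It remains to close the bootstrap, i.e.\ to see that the decomposition \eqref{orth-decomposition-time-p}--\eqref{projection2} exists for all $t>0$. At $t=0$ it exists by Lemma~\ref{lem-orthogonal-p}, and a direct estimate (using the exponential decay of $A(p_0)$ to absorb the weight) upgrades the $\ell^2$ bound on $w(0)$ to $\|w(0)\|_{h^1}\lesssim\delta$, so $q(0)\lesssim\delta^2$. On the maximal interval on which the decomposition persists, the previous paragraph forces $\|w(t)\|_{h^{1/2}}\lesssim\delta\ll\epsilon$; moreover conservation of $E$ together with the identity
\[
E(\alpha(t))=c(t)^2E(A(p(t)))+\|a(t)\|_{h^1}^2+\|b(t)\|_{h^1}^2,
\]
valid because \eqref{projection1}--\eqref{projection2} force $\langle M^2A(p(t)),a(t)\rangle=\langle M^2A(p(t)),b(t)\rangle=0$ via the relation \eqref{zeroLp}, gives $c(t)^2E(A(p(t)))\le E(\alpha(0))=E(A(p_0))+\mathcal{O}(\delta)$, so $p(t)$ cannot drift up toward $1$ and stays bounded away from the singular boundary $|p|=1$. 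Feeding the smallness of $\|w(t)\|_{h^{1/2}}$ back into \eqref{apriori-bound-p} then continues the decomposition and propagates the bounds to all $t>0$.

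I expect this last step to be the main obstacle. Conservation of $E$ rules out an upward drift of $p(t)$ but \emph{not} a downward one, which is compensated only by growth of $\|a(t)+ib(t)\|_{h^1}$ — a quantity not controlled by the argument above, whence the $h^{1/2}$ (rather than $h^1$) norm in the conclusion. The continuation must therefore be organised around a \emph{floating} centre $p(t)$, with the symplectic projection $\Pi_c(p(t))$ re-centred at every time, and the existence of the decomposition must be maintained along this moving centre; what makes the $h^{1/2}$ bound survive wherever $p(t)$ has wandered is exactly the $p$-independence of the coercivity constant established in Lemma~\ref{lemma-L-4}.
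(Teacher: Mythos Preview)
Your argument is correct and follows essentially the same strategy as the paper: expand the conserved combination of $H$ and $Q$ along the decomposition \eqref{orth-decomposition-time-p}, use the symplectic orthogonality \eqref{projection1}--\eqref{projection2} to kill the linear terms, and invoke the $p$-uniform coercivity of Lemma~\ref{lemma-L-4} to close. The only difference is bookkeeping: the paper works with the auxiliary quantity $\Delta(c):=c^2(Q-1)-\tfrac12(H-1)$, whose expansion along the decomposition produces $\tfrac12(c^2-1)^2 - c^2\langle L_+a,a\rangle - c^2\langle L_-b,b\rangle + N$ directly, so that $|c-1|$ and $\|a+ib\|_{h^{1/2}}$ are bounded simultaneously from $|\Delta(1)|\lesssim\delta^2$ and $|Q-1|\lesssim\delta$; you instead use $G=Q^2-H$, which isolates only the coercive quadratic form (the $M$-terms cancel as you note), and then recover $|c-1|$ afterwards from the exact identity $c^2=Q-q$. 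Both routes are equivalent, and your use of the global minimum $G\ge0$ with $G(A(p_0))=0$, $G'(A(p_0))=0$ to get $G(\alpha(0))\lesssim\delta^2$ is a clean substitute for the paper's separate bounds on $|\Delta(1)|$ and $|Q-1|$.
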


\begin{proof}
Let us define the  function
$$
\Delta(c) := c^2 \left( Q(\alpha) - 1 \right) - \frac{1}{2} \left( H(\alpha) - 1 \right).
$$
Evaluating this function for the decomposition (\ref{orth-decomposition-time-p}) with (\ref{projection1})
and (\ref{projection2}), and  using the variational characterization of the
ground state \eqref{K}, we obtain
\begin{eqnarray}
\Delta(c(t)) & = & c^2(t) \left( Q(\alpha(t)) - 1 \right) - \frac{1}{2} \left( H(\alpha(t)) - 1 \right) \nonumber \\
& = & \frac{1}{2} \left( c(t)^2 - 1 \right)^2 - c(t)^2 \langle L_+(p) a(t),a(t) \rangle
- c(t)^2 \langle L_-(p) b(t),b(t) \rangle \nonumber \\
& \phantom{t} & \phantom{text} + N(a(t),b(t),c(t)), \label{Delta-c1}
\end{eqnarray}
where the cubic and quartic terms in $N$ satisfy the bound
\begin{equation}
\label{bound-N}
|N(a,b,c)| \lesssim   |c| \| a \|^3_{h^{1/2}} + |c| \| b \|^3_{h^{1/2}}
+ \| a \|^4_{h^{1/2}} + \| b \|^4_{h^{1/2}}.
\end{equation}
On the other hand,
we have
\begin{equation}\label{Delta-c}
\Delta(c) = \Delta(1) + (c^2-1) \left( Q(\alpha(t)) - 1\right),
\end{equation}
where, thanks to (\ref{initial-data-p}) and the conservation of $Q(\alpha(t))$ and $H(\alpha(t))$, we have
\begin{equation}
\label{bound-Delta}
|\Delta(1) | \lesssim \delta^2,\quad |Q(\alpha(t)) - 1| \lesssim \delta.
\end{equation}
Combining \eqref{Delta-c1} with \eqref{Delta-c}, using the bounds \eqref{bound-N} and \eqref{bound-Delta}, as well as
the coercivity bounds (\ref{coercivity}) in Lemma \ref{lemma-L-4},
we obtain
\begin{eqnarray*}
\left( c(t)^2 - 1 \right)^2 +  \| a(t)  \|^2_{h^{1/2}} +  \| b(t) \|^2_{h^{1/2}} \lesssim \delta^2,
\end{eqnarray*}
which  yields the proof of the bound (\ref{final-bound-p}).
\end{proof}

The bound (\ref{final-bound-p}) controls the drift of the parameter $c(t)$ and the
perturbation terms $a(t)$ and $b(t)$ in the $h^{1/2}$ norm.
The following lemma uses the conservation of $E$ to control
 $a(t)$ and $b(t)$  in the $h^1$ norm
modulo a possible drift of the parameter $p(t)$ towards smaller values.

\begin{lemma}
\label{lem-control-extended}
Under the same assumption (\ref{initial-data-p}) for the initial data $\alpha(0)$ as in Lemma~\ref{lem-control-p},
the corresponding unique global solution
$\alpha(t) \in C(\mathbb{R}_+,h^1)$ of (\ref{flow})
can be represented by the decomposition (\ref{orth-decomposition-time-p}) with
(\ref{projection1}) and (\ref{projection2}) satisfying for all $t$
\begin{equation}
\label{final-bound-extended}
-p_0 \leq p(t) - p_0 \lesssim\delta, \quad \| a(t) + i b(t) \|_{h^1} \lesssim  \delta^{1/2}
+ |p_0 - p(t)|^{1/2}.
\end{equation}
\end{lemma}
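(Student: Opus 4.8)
The plan is to recover the missing $h^1$ control from the conservation of $E$, which by \eqref{lenergy} is precisely the square of the $h^1$-norm, exploiting the fact that the symplectic orthogonality conditions have been chosen along $MA(p)$ and $MA'(p)$. First I would insert the decomposition \eqref{orth-decomposition-time-p} into $E(\alpha(t))$: each phase factor $e^{i(\theta(t)+(n+1)\mu(t))}$ has unit modulus, so it drops out and
\[
E(\alpha(t)) = c(t)^2\, E(A(p(t))) + 2 c(t)\, \langle M^2 A(p(t)), a(t) \rangle + \|a(t)\|_{h^1}^2 + \|b(t)\|_{h^1}^2 .
\]
The point is that the linear cross term vanishes. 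Solving \eqref{zeroLp} for $M^2 A(p)$ gives $M^2 A(p) = p\, M A'(p) + \tfrac{1+p^2}{1-p^2}\, M A(p)$, so the orthogonality conditions \eqref{projection1} yield $\langle M^2 A(p), a \rangle = p \langle M A'(p), a \rangle + \tfrac{1+p^2}{1-p^2} \langle M A(p), a \rangle = 0$ (the same holds with $b$ in place of $a$, although that identity is not needed here). Using $E(A(p)) = (1+p^2)/(1-p^2)$ from \eqref{GS-conserved} and the conservation $E(\alpha(t)) = E(\alpha(0))$, one arrives at the exact identity
\[
\|a(t)\|_{h^1}^2 + \|b(t)\|_{h^1}^2 = E(\alpha(0)) - c(t)^2\, \frac{1+p(t)^2}{1-p(t)^2}.
\]

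Next I would read off both assertions of \eqref{final-bound-extended} from this identity. Write $f(p) := (1+p^2)/(1-p^2)$, which is increasing on $[0,1)$ with $f(p)\to\infty$ as $p\to 1$; from \eqref{initial-data-p} one has $|E(\alpha(0)) - f(p_0)| \lesssim \delta$, the implied constant depending on $p_0$. Since the left-hand side of the identity is nonnegative and $|c(t)-1| \lesssim \delta$ by Lemma~\ref{lem-control-p}, it follows that $\tfrac12 f(p(t)) \le c(t)^2 f(p(t)) \le E(\alpha(0)) \le f(p_0) + O(\delta)$, so $f(p(t))$ is bounded by a constant depending only on $p_0$; in particular $p(t)$ stays in a compact subinterval of $[0,1)$ and $\bigl(1-c(t)^2\bigr) f(p(t)) = O(\delta)$. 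The identity then rearranges to
\[
\|a(t)\|_{h^1}^2 + \|b(t)\|_{h^1}^2 = f(p_0) - f(p(t)) + O(\delta),
\]
using $E(\alpha(0)) - f(p_0) = O(\delta)$. Nonnegativity of the left-hand side forces $f(p(t)) - f(p_0) \lesssim \delta$, and since $f'$ is bounded below by a positive constant on a neighborhood of $p_0$ this gives the upper bound $p(t) - p_0 \lesssim \delta$; the lower bound $p(t) \ge 0$ is a matter of normalization, since $A_n(-p) = e^{i\pi n} A_n(p)$ lets one absorb a sign of $p$ into the local phase shift \eqref{symmshift2}, so the continuously tracked branch $p(t)$ may be taken nonnegative. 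Finally, for $p(t) \le p_0$ the mean value theorem gives $0 \le f(p_0) - f(p(t)) \lesssim p_0 - p(t) = |p_0 - p(t)|$, whence $\|a(t)\|_{h^1}^2 + \|b(t)\|_{h^1}^2 \lesssim \delta + |p_0 - p(t)|$, and taking square roots yields the second bound in \eqref{final-bound-extended}.

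Two supporting points are routine: the implied constants above depend on $p_0$ only through $\sup$ of $f$ and $\inf$ of $f'$ over the relevant compact interval, and one simply carries this dependence along; and the assertion that the representation \eqref{orth-decomposition-time-p} persists for all $t$ is a standard continuation argument — as long as it holds, Lemma~\ref{lem-control-p} keeps $a(t)+ib(t)$ small in $\ell^2$ while the energy estimate above keeps $p(t)$ away from $1$, so $\alpha(t)$ stays within the range where Lemma~\ref{lem-orthogonal-p} applies, and the decomposition extends by continuity. The only genuinely structural step, and the one I would be most careful about, is the vanishing of $\langle M^2 A(p), a\rangle$: it is exactly the choice of the symplectic orthogonality conditions \eqref{projection1}--\eqref{projection2} along $MA(p)$ and $MA'(p)$ that produces it, and with a different (say purely $\ell^2$-orthogonal) decomposition the expansion of $E$ would retain an uncontrolled linear term in $a$. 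It should also be emphasized that nothing in this argument prevents $p(t)$ from decreasing well below $p_0$; that is precisely what the term $|p_0-p(t)|^{1/2}$ in \eqref{final-bound-extended} records, and it reflects a genuine loss rather than a crude estimate.
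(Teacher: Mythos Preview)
Your proof is correct and follows essentially the same route as the paper: both substitute the decomposition into the conserved quantity $E$, use the symplectic orthogonality conditions along $MA(p)$ and $MA'(p)$ to kill the linear cross term, and then compare $c(t)^2 f(p(t))$ with $E(\alpha(0))\approx f(p_0)$ using the $|c(t)-1|\lesssim\delta$ bound from Lemma~\ref{lem-control-p}. Your write-up is in fact more explicit than the paper's on two points---spelling out why $\langle M^2 A(p), a\rangle$ vanishes via $M^2 A(p) = p\, MA'(p) + \tfrac{1+p^2}{1-p^2} MA(p)$, and justifying $p(t)\ge 0$ through the symmetry $A_n(-p)=e^{i\pi n}A_n(p)$---but the argument is the same.
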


\begin{proof}
Substituting the decomposition (\ref{orth-decomposition-time-p}) into
the expression for $E(\alpha)$ and using the orthogonality conditions
(\ref{projection1}) and (\ref{projection2}), we obtain
\begin{eqnarray}
\label{E-expansion-p}
E(\alpha(t)) = c(t)^2 \frac{1 + p(t)^2}{1 - p(t)^2} + \langle M a(t), M a(t) \rangle + \langle M b(t), M b(t) \rangle.
\end{eqnarray}
On the other hand, thanks to \eqref{initial-data-p}  and the conservation of $E(\alpha(t))$, we have
$E(\alpha(t)) = E(\alpha(0))$ with
\begin{equation}\label{bound-Delta-p}
\left| E(\alpha(0)) - \frac{1+p_0^2}{1-p_0^2} \right| \lesssim \delta.
\end{equation}
Combining \eqref{E-expansion-p} and \eqref{bound-Delta-p} and using
the bound (\ref{final-bound-p}) with $c(t)=1+\mathcal{O}(\delta)$, we obtain
\begin{equation}
\label{expansion-E}
 \frac{2(p(t)^2-p_0^2)}{(1-p(t)^2)(1-p_0^2)} + \| a(t) + i b(t) \|_{h^1}^2 \lesssim \delta,
\end{equation}
which yields the bound (\ref{final-bound-extended}).
\end{proof}

Theorem \ref{theorem-ground-state} is a reinstatement of the results of Lemmas \ref{lem-control-p} and \ref{lem-control-extended}
with $\epsilon = \mathcal{O}(\delta^{1/2})$ or, equivalently, $\delta = \mathcal{O}(\epsilon^2)$.
If $p(t) \in [0,p_0]$ drifts away from $p_0$, then the assumption (\ref{apriori-bound-p})
will be violated for some $t_0>0$. However, the decomposition (\ref{orth-decomposition-time-p})
with (\ref{projection1}) and (\ref{projection2}) is justified by continuation
of the solution $\alpha(t) \in C(\mathbb{R}_+,h^1)$ in a local neighborhood of
the ground state orbit $\mathcal{A}(p(t_0))$ for every $t_0$.

As mentioned above, Theorem \ref{theorem-ground-state} does not exclude
a drift in time of the parameter $p$ towards smaller values. To see if this is only a technical difficulty,
we try here a completely different approach, based on compactness-type arguments.  In doing so,
we follow very  closely the proof of Corollary~4 in \cite{GG10} which states the orbital stability
of the ground state for the cubic Szeg\H{o} equation.

From Theorem~2 (which is the analog of Proposition~5 in \cite{GG10}) we know that
the functional $G=Q^2-H$ attains  the global minimum equal to zero on the family of ground states.
Let $\alpha^{(j)}(0)$ be a sequence of initial data in $h^1$ such that
  \begin{equation}\label{minseq}
    \|\alpha^{(j)}(0)-A(p_0)\|_{h^1} \rightarrow 0 \quad \mbox{\rm as} \quad j \to \infty,
   \end{equation}
   for some $p_0\in [0,1)$
    and let $\alpha^{(j)}(t)$ be the corresponding sequence of solutions. By global existence in Theorem \ref{theorem-evolution}
    and conservation of $E(\alpha) = \| \alpha \|^2_{h^1}$, the sequence $(\alpha^{(j)}(t))_{j \in \mathbb{N}}$
    is bounded for every fixed $t$. Therefore, it has a subsequence, which
    converges weakly to some $\alpha(t) \in h^1$ for every $t$.

Since the functionals $Q(\alpha)$ and $H(\alpha)$ are continuous in the weak topology of $h^1$, it follows that
  \begin{equation}\label{qh0}
    Q(\alpha^{(j)}(0)) \rightarrow 1,\quad H(\alpha^{(j)}(0)) \rightarrow 1,\quad G(\alpha^{(j)}(0)) \rightarrow 0,
  \end{equation}
  hence by conservation laws
\begin{equation}\label{qh}
   Q(\alpha^{(j)}(t)) \rightarrow 1,\quad H(\alpha^{(j)}(t)) \rightarrow 1,\quad G(\alpha^{(j)}(t))\rightarrow 0.
  \end{equation}
  By weak continuity of $Q$ and $H$, we have
  \begin{equation}\label{qh}
   Q(\alpha(t)) = 1,\quad H(\alpha(t)) = 1,\quad G(\alpha(t))= 0.
  \end{equation}
From the  inequality $G\geq 0$ and the uniqueness of the global minimizer (modulo gauge symmetries)
we conclude that $\alpha^{(j)}(t)$ converges strongly in $h^1$ to $\alpha(t) \in \mathcal{A}(p(t))$ for every
$t$. This is equivalent to the bound (\ref{final-bound-p}) in Lemma \ref{lem-control-p}.

From the lower semi-continuity  of the conserved quantity $E$ we get
\begin{equation}\label{E-bound}
  E(\alpha(t))=\frac{1+p(t)^2}{1-p(t)^2} \leq \lim_{j\rightarrow\infty} E(\alpha^{(j)}(0))=\frac{1+p_0^2}{1-p_0^2}
\end{equation}
which eliminates the drift towards $p(t) > p_0$ but not towards $p(t) < p_0$. This is equivalent to
the bound (\ref{final-bound-extended}) in Lemma \ref{lem-control-extended}.

In view of the fact that two different approaches leave open a possibility of drift of $p(t)$ to lower values, it would be interesting to explore this issue numerically.

\vspace{0.2cm}
\noindent {\bf Acknowledgments:} We thank  M.~Maliborski for numerical tests of orbital stability. This research was supported  by the Polish National Science Centre grant no.\ DEC-2012/06/A/ST2/00397.
PB acknowledges  CUniverse
research promotion project by Chulalongkorn University (grant reference CUAASC) and  O.~Evnin  for helpful discussions.
\appendix
\section{Some useful identities}

Let us record identities for partial sums of geometric series
\begin{align}
& \sum_{k=0}^n p^{2k} = \frac{1-p^{2n+2}}{1-p^2}, \label{appendix7} \\
& \sum_{k=0}^n k p^{2k} = \frac{p^2 (1-(n+1)p^{2n} + n p^{2n+2}}{(1-p^2)^2}.\label{appendix8}
\end{align}
In order to verify the expression \eqref{Hessian-explicit}, we have used the following computations:
\begin{align}
j \leq n : & \quad
\sum_{k=0}^{\infty} S_{njk,n+k-j} p^{n+2k-j} = \sum_{k=0}^{j}(k+1) p^{n+2k-j} + \sum_{k=j+1}^{\infty}(j+1) p^{n+2k-j} \nonumber \\
& \quad \quad \quad = \frac{1}{(1-p^2)^2} (p^{n-j}-p^{2+j+n}), \label{appendix1}\\
j \geq n : & \quad
\sum_{k=j-n}^{\infty} S_{njk,n+k-j} p^{n+2k-j} = \sum_{k=j-n}^{j}(n+k-j+1) p^{n+2k-j} + \sum_{k=j+1}^{\infty}(n+1) p^{n+2k-j} \nonumber \\
& \quad \quad \quad  = \frac{1}{(1-p^2)^2} (p^{j-n}-p^{2+j+n}), \label{appendix2}\\
j \geq n : & \quad
\sum_{k=0}^{n+j}S_{njk,n+j-k} = \sum_{k=0}^{n}(k+1) + \sum_{k=n+1}^{j}(n+1) + \sum_{k=j+1}^{j+n}(n+j-k+1) \nonumber \\
& \quad \quad \quad = (1+j)(1+n), \label{appendix3}
\end{align}
where the identities (\ref{appendix7})--(\ref{appendix8}) have been used.

In order to verify the expression \eqref{B1} for the eigenvector to the eigenvalue $-1$
of the operators $L_{\pm}(p)$, or equivalently, of the operator $2T(p) - M$, we write explicitly
\begin{eqnarray*}
\left[ (2T(p)-M) v^{(1)} \right]_n & = & p^2 \left[ 2p^n - \delta_{n0} \right]
+ (1-p^2) \sum_{k=1}^{\infty} \left[ 2p^{|n-k|} - (n+1) \delta_{nk} \right] \\
& \phantom{t} & \times \left[ k(1-p^2) - (1+p^2) \right] p^{k-2}.
\end{eqnarray*}
By using the following identities
\begin{align}
& \sum_{k=1}^{\infty}p^{k+|n-k|} = \frac{p^2+n(1-p^2)}{(1-p^2)} p^n, \label{appendix6} \\
& \sum_{k=1}^{\infty} k p^{k+|n-k|} = \frac{2p^2+n(1-p^4)+n^2(1-p^2)^2}{2 (1-p^2)^2} p^n, \label{appendix9}
\end{align}
we have verified that $L_{\pm} v^{(1)} = (2T(p)-M) v^{(1)} = -v^{(1)}$.


\begin{thebibliography}{99}
\bibitem{BBCE} A. Biasi, P. Bizo\'n, B. Craps, and O. Evnin, \emph{Exact LLL Solutions for BEC Vortex Precession,}  arXiv:1705.00867


\bibitem{bal} P. Bizo\'n, B. Craps, O. Evnin, D. Hunik, V. Luyten, and M. Maliborski,
\emph{Conformal flow on $\mathbb{S}^3$ and weak field integrability in AdS$_4$},
Commun. Math. Phys. {\bf 353} (2017) 1179–-1199.

\bibitem{BHP2} P. Bizo\'n, D. Hunik-Kostyra, and D. Pelinovsky, \emph{Classification of stationary states of the conformal flow on $\mathbb{S}^3$}, in preparation

\bibitem{Hani1} E. Faou, P. Germain, and Z. Hani, \emph{The weakly nonlinear large-box limit of the
2D cubic nonlinear Schr\"{o}dinger equation}, J. Amer. Math. Soc. {\bf 29} (2016),  915--982.

\bibitem{GG10} P. G\'erard and S. Grellier, \emph{The cubic Szeg\"{o} equation},
Ann. Sci. \'{E}c. Norm. Sup\'{e}r {\bf 43} (2010), 761--810.

\bibitem{GG12} P. G\'erard and S. Grellier, \emph{Invariant tori for the cubic Szegö equation},
Invent. Math. {\bf 187}  (2012),  707--754.

\bibitem{GG15} P. G\'erard and S. Grellier, \emph{An explicit formula for the cubic Szego equation},
Trans. Amer. Math. Soc. {\bf 367}  (2015),  2979--2995.

\bibitem{Hani2} P. Germain, Z. Hani, and L. Thomann, \emph{On the continuous resonant equation for NLS. I. Deterministic analysis},
J. Math. Pures Appl. {\bf 105}  (2016),  131--163.

\bibitem{Hani3} Z. Hani and L. Thomann, \emph{Asymptotic behavior of the nonlinear Schrödinger equation with harmonic trapping},
Comm. Pure Appl. Math. {\bf 69}  (2016),  1727--1776.

%\bibitem{kev} P.G. Kevrekidis, D.E. Pelinovsky, and A. Saxena, \emph{When linear stability does not exclude nonlinear instability},
%Physical Review Letters {\bf 114} (2015), 214101 (6 pages).

\bibitem{Pel-book} D.E. Pelinovsky, {\em Localization in periodic potentials: from Schr\"{o}dinger operators to the
Gross-Pitaevskii equation}, LMS Lecture Note Series {\bf 390} (Cambridge University Press, Cambridge, 2011).
\end{thebibliography}
\end{document}